\title[patterns in a random tree or forest permutation]
{The number of occurrences of patterns in a random tree or forest permutation}
\date{8 March, 2022; typo corrected 9 March, 2022}
\author{Svante Janson}
\thanks{Supported by the Knut and Alice Wallenberg Foundation}
\address{Department of Mathematics, Uppsala University, PO Box 480,
SE-751~06 Uppsala, Sweden}
\email{svante.janson@math.uu.se}
\newcommand\urladdrx[1]{{\urladdr{\def~{{\tiny$\sim$}}#1}}}
\subjclass[2020]{} 
\numberwithin{equation}{section}
\renewcommand\le{\leqslant}
\renewcommand\ge{\geqslant}
\theoremstyle{plain}
\newtheorem{theorem}{Theorem}[section]
\newtheorem{lemma}[theorem]{Lemma}
\newtheorem{prop}[theorem]{Proposition}
\newtheorem{corollary}[theorem]{Corollary}
\theoremstyle{definition}
\newtheorem{exampleqqq}[theorem]{Example}
\newenvironment{example}{\begin{exampleqqq}}
  {\hfill\qedsymbol\end{exampleqqq}}
\newtheorem{remarkqqq}[theorem]{Remark}
\newenvironment{remark}{\begin{remarkqqq}}
  {\hfill\qedsymbol\end{remarkqqq}}
\newtheorem{definition}[theorem]{Definition}
\newtheorem{problem}[theorem]{Problem}
\theoremstyle{remark}
\newenvironment{romenumerate}[1][-10pt]{
\addtolength{\leftmargini}{#1}\begin{enumerate}
 }{\end{enumerate}}
\newenvironment{PXenumerate}[1]{
\begin{enumerate}
 }{\end{enumerate}}
\newcounter{oldenumi}
{\setcounter{oldenumi}{\value{enumi}}
\begin{romenumerate} \setcounter{enumi}{\value{oldenumi}}}
{\end{romenumerate}}
\newcounter{thmenumerate}
\newcounter{xenumerate}   
\newcommand\pfitem[1]{\par(#1):}
\newcommand\pfcase[1]{\smallskip\noindent\refstepcounter{steps}%
  \emph{Case \arabic{steps}: #1.} \noindent}
\newcommand\resetCase{\setcounter{steps}{0}}
\newcounter{steps}
\newcommand{\refT}[1]{Theorem~\ref{#1}}
\newcommand{\refTs}[1]{Theorems~\ref{#1}}
\newcommand{\refC}[1]{Corollary~\ref{#1}}
\newcommand{\refL}[1]{Lemma~\ref{#1}}
\newcommand{\refLs}[1]{Lemmas~\ref{#1}}
\newcommand{\refR}[1]{Remark~\ref{#1}}
\newcommand{\refRs}[1]{Remarks~\ref{#1}}
\newcommand{\refS}[1]{Section~\ref{#1}}
\newcommand{\refSs}[1]{Sections~\ref{#1}}
\newcommand{\refSS}[1]{Section~\ref{#1}}
\newcommand{\refP}[1]{Proposition~\ref{#1}}
\newcommand{\refPs}[1]{Propositions~\ref{#1}}
\newcommand{\refProb}[1]{Problem~\ref{#1}}
\newcommand{\refE}[1]{Example~\ref{#1}}
\newcommand{\refTab}[1]{Table~\ref{#1}}
\newcommand\marginal[1]{\marginpar[\raggedleft\tiny #1]{\raggedright\tiny#1}}
\newcommand\REM[1]{{\raggedright\texttt{[#1]}\par\marginal{XXX}}}
\newcommand\XREM[1]{\relax}
\xdef\klockan{\the\count1.0\the\count255}
\xdef\klockan{\the\count1.\the\count255}\fi
\DeclareMathOperator*{\sumx}{\sum\nolimits^{*}}
\newcommand{\sumxi}{\sum'}
\newcommand{\sumko}{\sum_{k=0}^\infty}
\newcommand{\summo}{\sum_{m=0}^\infty}
\newcommand{\sumno}{\sum_{n=0}^\infty}
\newcommand{\sumk}{\sum_{k=1}^\infty}
\newcommand{\suml}{\sum_{\ell=1}^\infty}
\newcommand{\sumn}{\sum_{n=1}^\infty}
\newcommand{\sumim}{\sum_{i=1}^m}
\newcommand{\sumin}{\sum_{i=1}^n}
\newcommand{\prodib}{\prod_{i=1}^b}
\newcommand{\prodim}{\prod_{i=1}^m}
\newcommand\set[1]{\ensuremath{\{#1\}}}
\newcommand\bigset[1]{\ensuremath{\bigl\{#1\bigr\}}}
\newcommand\xpar[1]{(#1)}
\newcommand\bigpar[1]{\bigl(#1\bigr)}
\newcommand\Bigpar[1]{\Bigl(#1\Bigr)}
\newcommand\lrpar[1]{\left(#1\right)}
\newcommand\bigsqpar[1]{\bigl[#1\bigr]}
\newcommand\Bigsqpar[1]{\Bigl[#1\Bigr]}
\newcommand\lrsqpar[1]{\left[#1\right]}
\newcommand\xcpar[1]{\{#1\}}
\newcommand\bigabs[1]{\bigl\lvert#1\bigr\rvert}
\def\rompar(#1){\textup(#1\textup)}    
\def\xexp(#1){e^{#1}}
\newcommand\ntoo{\ensuremath{{n\to\infty}}}
\newcommand\xtoo{\ensuremath{{x\to\infty}}}
\newcommand\punkt{\xperiod}    
\newcommand\iid{i.i.d\punkt}    
\newcommand\ie{i.e\punkt}
\newcommand\eg{e.g\punkt}
\newcommand\cf{cf\punkt}
\newcommand{\as}{a.s\punkt}
\newcommand{\aex}{a.e\punkt}
\newcommand{\tend}{\longrightarrow}
\newcommand\dto{\overset{\mathrm{d}}{\tend}}
\newcommand\pto{\overset{\mathrm{p}}{\tend}}
\newcommand\eqd{\overset{\mathrm{d}}{=}}
\newcommand\OLX{\OL{*}}
\newcommand\OLq{\OL{q}}
\newcommand\OL[1]{O_{L^{#1}}}
\newcommand\bbR{\mathbb R}
\newcounter{CC}
\newcounter{cc}
\newcommand\E{\operatorname{\mathbb E{}}}
\renewcommand\P{\operatorname{\mathbb P{}}}
\newcommand\Var{\operatorname{Var}}
\newcommand\Cov{\operatorname{Cov}}
\newcommand\Ge{\operatorname{Ge}}
\newcommand\ga{\alpha}
\newcommand\gd{\delta}
\newcommand\gf{\varphi}
\newcommand\gl{\lambda}
\newcommand\gO{\Omega}
\newcommand\gs{\sigma}
\newcommand\gS{\Sigma}
\newcommand\gss{\sigma^2}
\newcommand\gt{\tau}
\newcommand\gu{\upsilon}
\newcommand\cS{{\mathcal S}}
\newcommand\indic[1]{\boldsymbol1\xcpar{#1}}
\newcommand\qw{^{-1}}
\newcommand\qww{^{-2}}
\newcommand\qq{^{1/2}}
\newcommand\dd{\,\mathrm{d}}
\newcommand\ddx{\mathrm{d}}
\newcommand{\pgf}{probability generating function}
\newcommand\N{\mathsf N}
\newcommand\xoo{_1^\infty}
\newcommand\fF{\mathfrak{F}}
\newcommand\fS{\mathfrak{S}}
\newcommand\fT{\mathfrak{T}}
\newcommand\fSx{\fS_*}
\newcommand\Ustat{$U$-statistic}
\newcommand\sfC{\mathsf{C}}
\newcommand\sfL{\mathsf{L}}
\newcommand\sfR{\mathsf{R}}
\newcommand\bpi{\boldsymbol{\pi}}
\newcommand\btau{\boldsymbol{\tau}}
\newcommand\btaux{\boldsymbol{\tau}^{\boldsymbol*}}
\newcommand\tbtau{\widetilde{\boldsymbol{\tau}}}
\newcommand\occ{\operatorname{occ}}
\newcommand\ns{\occ_\gs}
\newcommand\nt{\occ_\gt}
\newcommand\npp[1]{\occ_{#1}}
\newcommand\perm[1]{\ensuremath{{#1}}}
\newcommand\opluss{\oplus\dots\oplus}
\newcommand\tautaum{\tau_1\opluss\tau_m}
\newcommand\tbtautaum{\tbtau_1\opluss\tbtau_m}
\newcommand\tbtautauN{\tbtau_1\opluss\tbtau_{N(n)}}
\newcommand\hgO{\widehat{\gO}}
\newcommand\tell{\tilde{\ell}}
\newcommand\tr{\tilde{r}}
\newcommand\ggs{\gs}
\newcommand\ggss{\ggs^2}
\newcommand\gamm{\gamma^2}
\newcommand\tgs{\tilde\gs}
\newcommand\Gtau{G_{|\tbtau|}}
\newcommand\gsik{{\iota_d}}
\newcommand\nuN{N}
\newcommand{\mux}{\nu}
\newcommand\gamx{\gamma}
\newcommand\gamxx{\gamx^2}
\newcommand\tmu{\widetilde{\mu}}
\newcommand\tmuinv{\frac{5+\sqrt5}{10}}
\newcommand\gali{\ga_{\sfL,i}}
\newcommand\gari{\ga_{\sfR,i}}
\newcommand\SNn{S_{N(n)}}
\newcommand\UU{\widehat U}
\newcommand\ff{\overline f}
\newcommand\fgs{\ff_\gs}
\newcommand\gbb{b}
\newcommand\gbt{T}
\newcommand\mb{b}
\newcommand\mbb{b'}
\newcommand\MD{D'}
\newcommand\MDd{D'_d}
\newcommand\bgf{\overline\varphi}
\newcommand{\Holder}{H\"older}
\begin{document}

\begin{abstract} 
The classes of tree permutations and forest permutations were defined by
Acan and Hitczenko (2016).
We study random permutations of a given length from these classes,
and in particular the number of occurrences of a fixed pattern in one of
these random permutations.
The main results show that the distributions of
these numbers are asymptotically normal.

The proof uses 
representations of random tree and forest permutations that
enable us to express the number of occurrences of a pattern by a
type of $U$-statistics; we then use general limit theorems for the latter.
\end{abstract}

\maketitle

\section{Introduction}\label{S:intro}

A number of authors have studied  properties of random permutations drawn
uniformly from all permutations of a given (large) length
in some given class of permutations.
The chosen class of permutations is often
a \emph{pattern class}, \i.e.,
is the class of all permutations avoiding a certain set of
one or several given patterns;
equivalently, the class is closed under taking
patterns (subpermutations).
(See \refS{Snot} for definitions of various terms used here and below.)
Several different properties have been studied; 
in the present paper we consider the asymptotic
distribution of the  number of occurences of some fixed pattern.
For this problem (and many others),
it seems impossible to give general results valid for all such 
permutation classes.
(See \eg{} \citet{Pak} for some related impossibility results supporting
this.)
Therefore, typically these classes are studied one by one,
with methods depending on the knowledge of some structure theorem for
permutations in that particular class.
See \eg{} \cite{BassinoEtAl} and \cite{SJ333} for some results of this type.

The present paper continues this line of research by studying the number of
occurences of a given pattern in a random \emph{tree permutation}
or \emph{forest permutation}.
These classes of permutations were defined by \citet{AH} as follows.
\begin{definition}\label{D1}
For a permutation $\pi$ of $[n]$, its permutation graph
$G_\pi$ 
is the
(labelled, undirected) graph with vertex set $[n]$, and an 
edge $ij$ for every inversion
$(i,j)$ in $\pi$, \ie, for every pair $(i,j)$ such that  $i<j$ and
$\pi(i)>\pi(j)$.

  A permutation $\pi$ is 
a \emph{tree permutation} if $G_\pi$ is a tree, 
and
a \emph{forest permutation} if the graph $G_\pi$ is a forest
(i.e., acyclic).
\end{definition}

Thus, every tree permutation is a forest permutation.

\citet{AH} noted also the following characterization, showing
that the forest permutations form a pattern class.

\begin{prop}[\cite{AH}]\label{P1}
 The forest permutations are precisely the permutations avoiding the
patterns $321$ and $3412$.
\end{prop}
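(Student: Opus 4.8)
The plan is to prove the two inclusions of the equivalence separately, in each case converting a forbidden pattern of $\pi$ into a short cycle of $G_\pi$ and back. For the ``only if'' direction (every forest permutation avoids $321$ and $3412$) I would prove the contrapositive: if $\pi$ contains $321$, say at positions $i<j<k$ with $\pi(i)>\pi(j)>\pi(k)$, then $ij$, $jk$ and $ik$ are all edges of $G_\pi$, so $G_\pi$ contains a triangle; and if $\pi$ contains $3412$, say at positions $i<j<k<l$ with $\pi(k)<\pi(l)<\pi(i)<\pi(j)$, then $ik$, $il$, $jk$, $jl$ are edges while $ij$ and $kl$ are not, so $G_\pi$ contains the $4$-cycle $i\,k\,j\,l\,i$. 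In either case $G_\pi$ has a cycle, so $\pi$ is not a forest permutation.

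For the ``if'' direction I would assume $\pi$ avoids both $321$ and $3412$ and suppose, towards a contradiction, that $G_\pi$ has a cycle. First I record that avoiding $321$ forces $G_\pi$ to be bipartite: the positions of the left-to-right maxima of $\pi$ form an increasing subsequence, and so do the remaining positions (otherwise two non-maxima $i<j$ with $\pi(i)>\pi(j)$, together with any $h<i$ such that $\pi(h)>\pi(i)$, would give an occurrence of $321$), and each of these two sets contains no inversion, hence is an independent set in $G_\pi$. Since $G_\pi$ is thus triangle-free, a \emph{shortest} cycle $C$ of $G_\pi$ has length $\ell\ge4$, and it is chordless (a chord would produce a shorter cycle). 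Let $u$ be the smallest vertex of $C$ and let $a,b$ be its two neighbours on $C$; since $u<a$, $u<b$ and $ua$, $ub$ are edges, we get $\pi(a),\pi(b)<\pi(u)$, and since $a$ and $b$ both neighbour $u$ they lie in the same class of the bipartition, hence in one of the two increasing subsequences above, so after interchanging their labels if necessary we may assume $\pi(a)<\pi(b)$, and then also $a<b$. Let $w$ be the neighbour of $a$ on $C$ other than $u$; as $C$ is chordless and $\ell\ge4$ we have $w\notin\{u,a,b\}$ and $w$ is not adjacent to $u$, so (since $u<w$) $\pi(w)>\pi(u)$; and since $aw$ is an inversion with $\pi(w)>\pi(a)$ (recall $\pi(w)>\pi(u)>\pi(a)$), we must have $w<a$. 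Hence the four positions listed in increasing order are $u<w<a<b$, carrying values with $\pi(a)<\pi(b)<\pi(u)<\pi(w)$ --- an occurrence of the pattern $3412$, contradicting our assumption. Therefore $G_\pi$ is acyclic, \ie{} $\pi$ is a forest permutation, and together with the first part this proves the proposition.

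The two pattern-to-cycle translations are immediate from \refD{D1}, and the fact that a $321$-avoiding permutation splits into two increasing subsequences is standard; these are the routine ingredients. The step that needs care --- and which I regard as the crux --- is extracting a $3412$ from a cycle: the point is that by working with a \emph{shortest} (hence chordless) cycle and with its \emph{leftmost} vertex $u$ one pins down enough of the relative order of the four vertices $u,w,a,b$ to read off the pattern uniformly in the cycle length $\ell$, so that in particular the boundary case $\ell=4$ requires no separate treatment.
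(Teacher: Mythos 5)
Your proof is correct. The easy direction (an occurrence of $321$ gives a triangle in $G_\pi$, an occurrence of $3412$ gives a $4$-cycle) is exactly the paper's observation that $\sfC_3$ and $\sfC_4$ are the permutation graphs of $321$ and $3412$. For the converse the paper simply asserts that ``it is easily seen that the only induced cycles in a permutation graph are $\sfC_3$ and $\sfC_4$'' and reads the patterns off from the induced subgraphs; you instead supply a complete argument for this hard half: the standard decomposition of a $321$-avoider into two increasing subsequences makes $G_\pi$ bipartite, a shortest cycle is then chordless of length $\ge 4$, and the four vertices $u<w<a<b$ you isolate carry the values in the order $3412$. The checks all go through (in particular $a,b$ lie in the same independent class, so $\pi(a)<\pi(b)$ forces $a<b$; and $uw$ being a non-edge with $u<w$ forces $\pi(w)>\pi(u)$), so your write-up is a self-contained substitute for the step the paper leaves to the reader, at the cost of being longer than the paper's one-line appeal to the classification of induced cycles in permutation graphs.
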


However, the class of tree permutations is \emph{not} a pattern class, since
a subpermutation of a tree permutation may be a forest permutation with a
disconnected permutation graph. (For example, 312 is a tree permutation, but
its subpermutation 12 is not.)

The structures of tree permutations and forest permutations were studied in
\cite{AH}; see \refS{Sforest}.
Using this, and results on (conditioned) \Ustat{s}, we will 
show that the number of occurences of a fixed pattern in a random tree or
forest permutation is asymptotically normal, as the length tends to $\infty$;
precise results are stated in \refS{Smain}, and proved in the remainder of
the paper.
\refS{SSU} defines the versions of \Ustat{s} that are used in the paper, and
cites some results for them from \cite{SJ332} and \cite{SJ360}.
Tree and forest
permutations are studied in \refSs{Scode}--\ref{Staun}, leading to a
representation of random forest permutations in \refS{SRT}
and a, quite different, representation of random
tree permutations in \refS{SRTB}; these representations both enable us to 
count patterns by \Ustat{s}, which eventually yields proofs of the theorems.

\begin{remark}
  Although we use similar methods for patterns in random tree permutations
  and in random forest permutations, the details are quite different, and we
  see no direct relation between the results for the two cases.
Note that a random forest permutation is a (random) sum of tree
permutations,
but most of these are very small 
(see \eqref{pint} and \eqref{tbtau});
hence there is no reason to expect a relation between asymptotics for large
forest permutations and large tree permutations. 
\end{remark}

\section{Definitions and notation}\label{Snot}

\subsection{Permutations}
Let $\fS_n$ be the set of permutations of $[n]:=\set{1,\dots,n}$,
and $\fSx:=\bigcup_{n}\fS_n$.
Similarly, let $\fF_n$ be the set of all forest permutations of length $n$
and $\fT_n$ the subset of tree permutations,
and let $\fF_*:=\bigcup_{n}\fF_n$ and $\fT_*:=\bigcup_{n}\fT_n$.
Thus
$\fT_n\subseteq\fF_n\subseteq\fS_n$. 

We denote the length of a permutation $\pi$ by $|\pi|$.


\subsection{Occurrence of patterns}
If $\gs=\gs_1\dotsm\gs_m\in\fS_m$ and $\pi=\pi_1\dotsm\pi_n\in\fS_n$,
then an \emph{occurrence} of $\gs$ in $\pi$ is a 
subsequence $\pi_{i_1}\dotsm\pi_{i_m}$, 
with $1\le i_1<\dots<i_m\le n$, 
that
has the same order as
$\gs$, i.e., 
$\pi_{i_j}<\pi_{i_k} \iff \gs_j<\gs_k$ for all $j,k\in [m]$.
In this context, $\gs$ is often called a \emph{pattern};
we may also say that $\gs$ is  a \emph{subpermutation} of $\pi$.
We let $\ns(\pi)$ be the number of occurrences of $\gs$ in $\pi$, and note
that
\begin{equation}\label{11}
  \sum_{\gs\in\fS_m} \ns(\pi) = \binom nm,
\end{equation}
for every $\pi\in\fS_n$ and every $m$. 
For example, an inversion is an occurrence of \perm{21}, and thus
$\npp{21}(\pi)$ is the number of inversions in $\pi$.

We say that a permutation $\pi$ \emph{avoids} another permutation 
$\tau$ if $\nt(\pi)=0$; otherwise, $\pi$
\emph{contains} $\tau$.

\subsection{Sums and decompositions of permutations}
\label{SSblocks}  
If $\gs\in\fS_m$ and $\tau\in\fS_n$,  their (direct) \emph{sum}
$\gs\oplus\tau\in\fS_{m+n}$
is defined
by letting $\tau$ act on $[m+1,m+n]$ in the natural
way; more formally,
$\gs\oplus\tau=\pi\in\fS_{m+n}$ where $\pi_i=\gs_i$ for $1\le i\le m$, and
$\pi_{j+m}=\tau_j+m$ for $1\le j\le n$. 
It is easily seen that $\oplus$ is an associative operation.
We say that a permutation $\pi\in\fS_*$  is
\emph{decomposable} if $\pi=\gs\oplus\tau$ for some 
$\gs,\tau\in\fS_*$, and
\emph{indecomposable} otherwise;
we also call an indecomposable permutation a \emph{block}.
See further \eg{}  \cite[Exercise VI.14]{Comtet}.

It is easy to see that any permutation $\pi\in\fS_*$ has a unique
decomposition $\pi=\pi_1\oplus\dots\oplus\pi_\ell$  into indecomposable
permutations (blocks)  $\pi_1,\dots,\pi_\ell$ 
(for some, unique, $\ell\ge1$);
we may call these the \emph{blocks of $\pi$}

 If $i<j<k$ and $ik$ is an edge in the permutation
graph $G_\pi$
(\ie, an inversion), then at least one of $ij$ and $jk$ is also an edge.
It follows that the components of the graph $G_\pi$ are intervals in $[n]$,
and then it is easy to see that they correspond to the blocks
of $\pi$; in particular, $G_\pi$ is connected if and only if $\pi$ is
indecomposable. 

\subsection{Random permutations}\label{SSRP}
$\btau_n$ will always denote a uniformly random tree permutation of length $n$;
similarly, $\bpi_n$ is a uniformly random forest permutation of length $n$.
In other words, these are uniformly random elements of $\fT_n$ and $\fF_n$,
respectively. 

$\tbtau$ denotes a certain random tree permutation of random length
defined in \refS{SRT}, see \eqref{tbtau}; 
$\tbtau_1,\tbtau_2,\dots$ will denote independent copies of $\tbtau$.
Similarly, 
 $\btaux_m$ is  another random tree permutation of random length,
defined in \refS{SRTB}.

\subsection{Some further notation}\label{SSfurther}

Convergence in distribution is denoted by $\dto$,
and convergence in probability by $\pto$.
We let $\eqd$ denote equality in distribution.

Given  sequences of random variables $X_n$ and constants $a_n>0$, 
and a fixed exponent $q>0$, 
we let
$X_n=\OLq(a_n)$ mean $\E|X_n/a_n|^q=O(1)$.
Moreover, we write
$X_n=\OLX(a_n)$ if 
$X_n=\OLq(a_n)$ for every $q<\infty$.

By ``convergence of all moments'' we mean both ordinary and absolute
moments, including centered versions.

We find it convenient to express some explicit constants using
\begin{align}\label{phi}
  \phi:=\frac{1+\sqrt5}{2},
\end{align}
the golden ratio. Recall that $\phi^2=\phi+1$.
We will also let $p:=\phi\qww$, see \eqref{Tp}--\eqref{p2phi}.

Unspecified limits are as \ntoo.

\section{Main results}\label{Smain}

Our main results are the following; the proofs are given later.
In both cases, note that if $\gs$ is not a forest permutation, then
$\ns(\bpi_n)=0$.
Note also that we may assume $|\gs|\ge2$, since
the case $\gs=1$ is utterly trivial with $\occ_1(\pi)=n$ for every
$\pi\in\gS_n$.
Moreover, if $\tau\in\fT_n$ is a tree permutation, then 
$\occ_{21}(\tau)=n-1$, since the number of inversions equals the number of
edges in the tree $G_\tau$.

\begin{theorem}\label{TT}
Let $\btau_n$ be a uniformly random tree permutation of length $n$,
and let $\gs$ be a fixed forest permutation 
with  block decomposition $\gs=\gs_1\opluss\gs_d$.
Then, as \ntoo,
for some $\gamm=\gamm_\gs\ge0$,
\begin{equation}\label{tt}
\frac{\ns(\btau_n)- n^{d}/d!}
{n^{d-1/2}} 
\dto \N\bigpar{0,\gamx^2},
\end{equation}
with convergence of all moments.
Moreover, $\gamm>0$ unless $|\gs_i|\le2$ for every $i$, \ie, unless 
each block $\gs_i$ is either $1$ or $21$.
\end{theorem}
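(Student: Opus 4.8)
The plan is to combine the representation of $\btau_n$ constructed in \refS{SRTB} with the general limit theory for (asymmetric) $U$-statistics recalled in \refS{SSU} and proved in \cite{SJ332,SJ360}. Recall that \refS{SRTB} realises $\btau_n$, in distribution, as a deterministic decoding of the object $\btaux_m$ of \refS{SRTB}, which is assembled from an i.i.d.\ sequence of ``atoms'' (each carrying some bounded data, such as the pair $(\gali,\gari)$) whose total length is conditioned to equal $n$, so that $\Theta(n)$ atoms are involved. The first step is to rewrite $\ns(\btau_n)$ in these terms. Since $\gs=\gs_1\opluss\gs_d$ with each block $\gs_i$ indecomposable, an occurrence of $\gs$ in a permutation $\pi$ consists of $d$ pieces that are mutually $\oplus$-separated in both position and value, and the positions of the $i$-th piece induce a connected subgraph of $G_\pi$ isomorphic to $G_{\gs_i}$, which is a tree because $\gs_i$ is a block of a forest permutation. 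For $\pi=\btau_n$, whose permutation graph is a tree, a piece realising a block of size $\ge2$ is therefore a small subtree, and after decoding it typically occupies only boundedly many consecutive atoms. Hence, up to a remainder coming from occurrences whose pieces straddle many atoms or collide within a single atom,
\begin{equation}\label{EUstat}
  \ns(\btau_n)=\sum_{i_1<\dots<i_d}f\bigpar{\xi_{i_1},\dots,\xi_{i_d}}+R_n,
\end{equation}
where the sum runs over suitably separated atom indices, the $\xi_i$ are the i.i.d.\ atom variables, and $f\bigpar{\xi_{i_1},\dots,\xi_{i_d}}$ counts the ways of instantiating $\gs_j$ near atom $i_j$; this $f$ is a bounded function of the data of $O(1)$ atoms. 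Thus \eqref{EUstat} exhibits $\ns(\btau_n)$ as an asymmetric $U$-statistic of order $d$ evaluated under the length-conditioning, and boundedness of $f$ makes all moment hypotheses of the cited theorems automatic. One checks separately that $R_n=\OLX(n^{d-1})$, which is a counting matter, since a straddle or a collision costs at least one free index.

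The general central limit theorem for conditioned asymmetric $U$-statistics, applied to \eqref{EUstat}, then gives \eqref{tt} for some $\gamx^2\ge0$, with convergence of all moments. The centering is confirmed by evaluating the leading order of $\E f$ under the conditioning, which reduces to the statement that the expected number of instantiations of each block per atom tends to the appropriate constant --- precisely what the normalisation $p=\phi\qww$ is designed to achieve --- and yields $\E\ns(\btau_n)=n^d/d!+O(n^{d-1})$; a $U$-statistic of order $d$ over $\Theta(n)$ atoms then has fluctuations of order $n^{d-1/2}$, which is the scaling in \eqref{tt}.

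For the last assertion, recall that the cited theory identifies $\gamx^2$ as the limiting variance of the first-order (H\'ajek) projection of $f$, \emph{after} subtracting the part explained by the linear statistic one conditions on (the total length). Suppose first that $|\gs_i|\le2$ for every $i$, i.e.\ each $\gs_i$ is $1$ or $21$. Then the number of instantiations of $\gs_i$ near an atom is an affine function of the size of that atom: it is identically $1$ when $\gs_i=1$, and when $\gs_i=21$ it is the number of inversion-edges attached to the atom, and these sum over all atoms to the deterministic total $n-1$. Consequently the whole sum in \eqref{EUstat} is, to leading order, an affine function of the atom sizes, hence a deterministic function of $n$ once we condition on the length; the first-order projection is therefore annihilated and $\gamx^2=0$. (This is consistent with $\occ_{21}(\btau_n)=n-1$, and asserts only that $\Var\ns(\btau_n)=o(n^{2d-1})$, not that $\ns(\btau_n)$ is deterministic.) If instead some block satisfies $|\gs_i|\ge3$, then --- $\gs_i$ being a tree permutation on at least three vertices --- its instantiation count near an atom is a genuinely non-affine function of the atom's internal structure, so the first-order projection does not collapse onto a multiple of the length; a direct estimate then shows that its limiting variance is strictly positive, whence $\gamx^2>0$.

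I expect this last step to be the main obstacle: one must describe the first-order projection of the conditioned $U$-statistic \eqref{EUstat} explicitly enough to see that ``affine in the atom sizes'' really does kill it when every block has size at most $2$, while a single block of size at least $3$ prevents degeneracy --- and this uses the detailed structure of the representation of \refS{SRTB}, not merely its existence. Controlling the remainder $R_n$ in \eqref{EUstat}, so that the clean $U$-statistic emerges with a genuinely negligible error, is the other point where real work rather than routine bookkeeping is needed.
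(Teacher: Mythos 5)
Your overall strategy is the same as the paper's: represent $\btau_n$ via the i.i.d.\ block-length atoms $X_i=(L_i,R_i)$ of \refS{SRTB}, rewrite $\ns(\btau_n)$ as a (constrained) $U$-statistic of order $d$ in these atoms conditioned on $S_{N(n)}=n$ plus an $\OLX(n^{d-1})$ remainder (this is \refL{LK}), and invoke the renewal-theoretic limit theorems of \cite{SJ360}. Two of the points you defer, however, are exactly where the content of the proof lies, and one assertion you rely on is false.

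First, $f$ is \emph{not} a bounded function of the atoms. The number of ways to instantiate a block $\gs_j$ with $|\gs_j|\ge3$ in a run of atoms is a product of binomial coefficients such as $\binom{L-1}{\ell_i-1}$ in the geometric block lengths (see \eqref{kem} and \eqref{lttl}--\eqref{lttr}), hence grows polynomially in the unbounded variables $L_i,R_i$. The moment hypotheses of the cited theorems do hold, but because the geometric distribution has all moments finite, not ``automatically from boundedness''; and the boundary terms of the remainder involve $h(X_{N(n)-j})$, whose distribution is size-biased and must be controlled separately (the paper does this in \refL{Lren}). Second, and more seriously, your argument for $\gamm>0$ when some $|\gs_j|\ge3$ is only the assertion that the first-order projection ``does not collapse onto a multiple of the length.'' In the constrained setting the degeneracy criterion (\refP{PUU0}, \refP{PUUN}) is weaker than that: $\gamm=0$ can occur whenever $f_j-\frac{\mu}{\nu}h$ is a \emph{coboundary}, i.e.\ a telescoping difference $\psi_j(X_2,\dots,X_{b_j})-\psi_j(X_1,\dots,X_{b_j-1})$, which is possible for functions that are nonconstant and not multiples of $h$. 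Ruling this out is the real work: the paper shows that $\ff_{\gs,j}$ is a polynomial of total degree $\gd_j\ge2$ in the block lengths, and that on an event where a run of atoms equals $(s,s)$ the partial sum $S_n(\ff_{\gs,j}-\frac{\mu}{\nu}h)$ is a nonconstant polynomial in $s$, contradicting the consequence of the coboundary identity. Without an argument of this kind (or an explicit variance computation), the positivity claim is not established. Finally, note that the centering constant $n^d/d!$ rests on the identity $\E\ff_{\gs_j}(X_1,\dots,X_{b_j})=4=\E h(X_1)$ for \emph{every} tree permutation $\gs_j$ (\refL{Laga}), which is a genuine computation (and a surprising one, cf.\ \refProb{Prob=}), not merely ``what the normalisation is designed to achieve'' — indeed the relevant normalisation here is the factor $2^{-\ell}$ in \eqref{LR}, not $p=\phi^{-2}$, which belongs to the forest-permutation construction of \refS{SRT}.
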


We state the special case $d=1$ separately.

\begin{corollary}\label{CTT}
Let $\btau_n$ be a uniformly random tree permutation of length $n$,
and let $\gs$ be a fixed tree permutation. 
Then, as \ntoo,
for some $\gamm=\gamm_\gs\ge0$,
\begin{equation}\label{ctt}
\frac{\ns(\btau_n)- n}
{\sqrt n} 
\dto \N\bigpar{0,\gamx^2},
\end{equation}
with convergence of all moments.
Moreover, $\gamm>0$ except in the trivial cases $|\gs|\le2$, 
when $\ns(\btau_n)$ is deterministic ($n$ or $n-1$).
\end{corollary}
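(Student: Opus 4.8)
Corollary \ref{CTT} is the special case $d=1$ of Theorem \ref{TT} (together with the observation, already made before the statements, that $\occ_1(\btau_n)=n$ and $\occ_{21}(\btau_n)=n-1$ are deterministic), so I describe how I would prove Theorem \ref{TT}. The plan is to use the representation of a uniformly random tree permutation given in \refS{SRTB} to rewrite $\ns(\btau_n)$ as a conditioned $U$-statistic of order $d$, and then invoke the limit theorems recalled in \refS{SSU} from \cite{SJ332,SJ360}. The combinatorial input is an exact decomposition of occurrences: since the blocks $\gs_1,\dots,\gs_d$ of a forest permutation $\gs$ are the permutations read off the connected components (which are trees) of $G_\gs$, and since $\gs=\gs_1\opluss\gs_d$ forces every value used by $\gs_i$ to lie below every value used by $\gs_{i+1}$, an occurrence of $\gs$ in any $\pi$ splits uniquely into occurrences $O_1,\dots,O_d$ of $\gs_1,\dots,\gs_d$ that are ``staircase-separated'', and conversely every such tuple assembles into an occurrence of $\gs$. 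Thus
\[
\ns(\pi)=\#\bigcpar{(O_1,\dots,O_d):\ O_i\text{ is an occurrence of }\gs_i\text{ in }\pi\ \text{and}\ O_1\prec\dots\prec O_d},
\]
where $A\prec B$ means that every position of $A$ precedes every position of $B$ and every value of $A$ is below every value of $B$.

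In the encoding of \refS{SRTB}, $\btau_n$ is (in distribution) a local deterministic function of an \iid{} sequence conditioned on a size constraint, where ``local'' means that whether a bounded set of coordinates realises a fixed small pattern, together with the relevant position/value comparisons, depends only on bounded windows of the underlying sequence. Feeding the displayed identity into this encoding should rewrite $\ns(\btau_n)$, up to an error $\OLX\bigpar{n^{d-1}}$ that is negligible at scale $n^{d-1/2}$, as $\sum f$ over increasing $d$-tuples of coordinates of the underlying sequence, with $f$ a bounded symmetric kernel (bounded because $|\gs|$ is fixed); this is precisely a conditioned $U$-statistic of order $d$ in the sense of \refS{SSU}, and the order $d$ matches the number of blocks because each of the $d$ staircase-separated pieces is anchored at one coordinate.

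With this in hand the asymptotics follow from the general theorem: since $f$ is bounded all hypotheses hold, so $\bigpar{\ns(\btau_n)-\E\ns(\btau_n)}/n^{d-1/2}\dto\N\bigpar{0,\gamm}$ with convergence of all moments, where $\gamm=\gamm_\gs\ge0$ is the variance of the first-order Hoeffding projection of $f$, corrected for the conditioning. A first-moment computation in the encoding should give $\E\ns(\btau_n)=n^d/d!+O(n^{d-1})$ --- equivalently, a staircase-separated $d$-tuple of anchors can on average be completed to an occurrence of $\gs$ in asymptotically one way --- so $\E\ns(\btau_n)$ may be replaced by $n^d/d!$ without affecting the limit, yielding \eqref{tt}; \eqref{ctt} is the case $d=1$.

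It remains to decide when $\gamm>0$. By the theory $\gamm=0$ iff the conditioning-corrected first projection of $f$ is a.s.\ constant, so two things are needed. If every $\gs_i$ is $1$ or $21$, the first projection should be constant: a ``$1$'' can be anchored at any coordinate and a ``$21$'' at any edge, so the relevant local multiplicities are governed by the global counts $n$ and $n-1$ rather than by the local state and the projection degenerates (note this does not mean $\ns(\btau_n)$ is deterministic when $d\ge2$ --- e.g.\ $\occ_{132}(\btau_n)$ is genuinely random --- only that its fluctuations are $\op\bigpar{n^{d-1/2}}$). If some block $\gs_i$ has $|\gs_i|\ge3$, so $\gs_i$ is a nontrivial indecomposable tree permutation, then hosting $\gs_i$ requires a genuinely constrained local configuration whose conditional density varies with the local state, so the first projection is non-constant and $\gamm>0$. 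I expect this last point to be the main obstacle: it requires unwinding the first projection explicitly in terms of the \refS{SRTB} encoding and exhibiting the variation, and it is the only place where the hypothesis $|\gs_i|\ge3$ enters. Putting $\ns(\btau_n)$ into clean order-$d$ $U$-statistic form with a bounded kernel and pinning down the constant $1/d!$ also require care, but should be routine once the encoding of \refS{SRTB} is available.
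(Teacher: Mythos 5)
Your overall strategy is the paper's: encode $\btau_n$ via the block-length sequence of \refS{SRTB}, write $\ns(\btau_n)$ as a conditioned $U$-statistic over that i.i.d.\ sequence, and invoke the renewal-theory limit theorems. But there are genuine gaps. First, and most seriously, the centering: you assert that ``a staircase-separated $d$-tuple of anchors can on average be completed to an occurrence of $\gs$ in asymptotically one way'', i.e.\ that $\mu/\nu=1$, which is exactly the statement that $\E\ns(\btau_n)\sim n^d/d!$ for \emph{every} tree permutation $\gs$ regardless of its length. This is the surprising fact the paper isolates as \refProb{Prob=} and for which it has no combinatorial explanation; it is established only by explicit computation (\refLs{Lega} and \ref{Laga}): the kernel is a product of binomial coefficients such as $\binom{L-1}{\ell_i-1}$ and $\binom{L}{\ell_i-1}$ in the geometric block lengths, and a four-case calculation of their expectations shows $\E\fgs(X_1,\dots,X_b)=4=\nu$ for every tree permutation $\gs$. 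Without some such argument the centering $n$ in \eqref{ctt} (and $n^d/d!$ in \eqref{tt}) is unjustified. Relatedly, the kernel is \emph{not} bounded: it grows polynomially in the unbounded geometric block lengths, so the moment hypotheses must actually be verified (the paper does this via \eqref{bam1}--\eqref{bam2} and \refL{Lren}); this is fixable, but ``bounded because $|\gs|$ is fixed'' is wrong.

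Second, the $U$-statistic you need is a \emph{constrained} one: a block $\gs_j$ whose code has $2b_j$ runs occupies $b_j$ \emph{consecutive} coordinates $X_{i_j},\dots,X_{i_j+b_j-1}$ (\refL{LTT}), so the relevant limit theorems are \refPs{PUUN2} and \ref{PUUNmom}, not the unconstrained \refPs{PUN2} and \ref{PUNmom}. Correspondingly, the degeneracy criterion is not ``the first projection is non-constant'' but the coboundary condition of \refP{PUU0}, and ruling it out when some $|\gs_j|\ge3$ requires an actual argument; the paper shows that $\ff_{\gs,j}$ is a polynomial of degree $\ge2$ in the block lengths while $h$ has degree $1$, so no telescoping identity can hold. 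Your positivity discussion gestures at the right idea but, as you acknowledge, leaves this --- together with the mean computation, the only genuinely delicate step --- unproved. For the $d=1$, $b(\gs)=1$ instances of \refC{CTT} these issues simplify (the $U$-statistic is a plain renewal sum, \refR{R1}), but the mean computation is still needed and still non-obvious.
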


Furthermore, when $\gs$ is a tree permutation,
we  give an exact formula for $\E\ns(\btau_n)$ in \refT{TE};
this expectation depends on $n$ and $|\gs|$ only.

The asymptotic variance $\gamm_\gs$ in \refT{TT} and \refC{CTT} can be found
from our proof, but we do not know any explicit formula; we evaluate it
for some simple cases in \refE{Elr}.
Note that \refE{Elr} shows that  $\gamm_\gs$ 
in \refC{CTT} really depends on $\gs$, and, moreover, that it is not simply a
function of $|\gs|$. 

\begin{remark}
  If $\gs$ is a foresst permutation with $d\ge2$ blocks $\gs_i$, all of lengths
  $|\gs_i|\le 2$, then $\gamm=0$ in \eqref{tt}, but $\ns(\btau_n)$ is, in
  general, not deterministic. We conjecture that $\ns(\btau_n)$ is
  asymptotically normal in this case too, with a variance of smaller order
than in \refT{TT}, but we have not pursued this and leave it as an open problem.
 (Cf.\ \refT{TFI} below for random forest permutations $\pi_n$.)
\end{remark}

\begin{problem}\label{Prob=}
Find a combinatorial explanation for 
 the surprising fact that 
the asymptotic expectation $n$ in \eqref{ctt} is the same for all
tree permutations $\gs$.
(We will see in the proof that this is equivalent to the fact that 
the expectation  in \eqref{lj3} is the same for all tree permutations $\gs$.)

More generally, 
find a combinatorial explanation for the  fact that 
the asymptotic expectation $n^d/d!$ (or, equivalently, $\binom nd$)
in \eqref{tt} depends only on the the number of blocks $d$ in $\gs$.

Moreover, as just mentioned, \refT{TE} shows that for two tree permutations
$\gs_1$ 
and $\gs_2$ of the same length, the expectations $\E\occ_{\gs_1}(\btau_n)$ 
and $\E\occ_{\gs_2}(\btau_n)$ are equal for every $n$. 
(This obviously requires $|\gs_1|=|\gs_2|$, since $\ns(\btau_n)=0$ if
$n<|\gs|$.) 
Again, we do not know a simple proof of this fact, although the proof of
\refT{TE} gives a kind of  combinatorial reason.
Also, we do not know whether the equality extends to two forest
permutations with the same length and the same number of blocks.
\end{problem}

We turn to patterns in a random forest permutation.

\begin{theorem}  \label{TF}
Let $\bpi_n$ be a uniformly random forest permutation of length $n$,
and let $\gs$ be a fixed forest permutation with  block decomposition
$\gs=\gs_1\opluss\gs_d$.
Let $\gl$ be the number of blocks 
$\gs_i$ of length $|\gs_i|=1$, and let
\begin{align}\label{tmugs}
\tmu_{\gs}&:
=\frac{1}{d!}(\phi+2)^{\gl-d}\phi^{4d-3\gl-|\gs|}
=\frac{1}{d!}5^{-(d-\gl)/2}\phi^{3d-2\gl-|\gs|}
.\end{align}
Then, for some 
 $\gamm_\gs\ge0$,
\begin{align}\label{tf1}
\frac{ \occ_\gs(\bpi_n)-\tmu_\gs n^d}{n^{d-1/2}}
\dto \N\bigpar{0,\gamm_\gs},
\end{align}
with convergence of all moments.

Furthermore,
$\gamm_\gs>0$
except in the case $\gs=1\cdots d$ 
(the identity permutation with every $|\gs_i|=1$).
\end{theorem}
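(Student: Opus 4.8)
The plan is to use the representation of a uniformly random forest permutation established in \refS{SRT}, namely $\bpi_n\eqd\tbtau_1\opluss\tbtau_{N(n)}$, where $\tbtau_1,\tbtau_2,\dots$ are \iid{} copies of the random tree permutation $\tbtau$ of \eqref{tbtau}, $S_k:=|\tbtau_1|+\dots+|\tbtau_k|$, and $N(n)$ is the associated renewal count, the sequence being subject to $S_{N(n)}=n$; recall that $|\tbtau|$ has finite moments of all orders. The first step is to rewrite $\occ_\gs$ in terms of the blocks $\tbtau_j$. Since the block decomposition of a permutation is unique, an occurrence of $\gs=\gs_1\opluss\gs_d$ in a direct sum picks out, in each factor, an occurrence of a consecutive (possibly empty) group of blocks of $\gs$; concretely, iterating
\[
\occ_\gs(\alpha\oplus\beta)=\sum_{\gs=\gs'\oplus\gs''}\occ_{\gs'}(\alpha)\,\occ_{\gs''}(\beta)
\]
(the sum over the $d+1$ decompositions of $\gs$ into two consecutive groups of blocks, with $\occ$ of the empty permutation equal to $1$) yields $\occ_\gs(\bpi_n)=V_{N(n)}+R_{N(n)}$, where
\[
V_N:=\sum_{1\le j_1<\dots<j_d\le N}\ \prod_{i=1}^{d}\occ_{\gs_i}(\tbtau_{j_i})
\]
collects the contributions that put each block $\gs_i$ into a distinct factor $\tbtau_{j_i}$, and $R_N$ collects all remaining terms, in each of which some factor absorbs two or more consecutive blocks of $\gs$.

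The second step is to show $R_{N(n)}$ is negligible at the scale $n^{d-1/2}$. Every term of $R_N$ is an asymmetric conditioned $U$-statistic of order at most $d-1$ in the \iid{} blocks $\tbtau_j$, with kernel a product of at most $d-1$ occurrence numbers $\occ_\rho(\tbtau_j)$; since $N(n)\le n$ and all these occurrence numbers have moments of every order, the moment bounds for conditioned $U$-statistics from \refS{SSU} and \cite{SJ332,SJ360} give $R_{N(n)}=\OLX(n^{d-1})$ and $R_{N(n)}-\E R_{N(n)}=\OLX(n^{d-3/2})$. Hence $R_{N(n)}$ affects neither the limit law nor the centering on the scale $n^{d-1/2}$.

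The third step treats $V_{N(n)}$, which is exactly a conditioned asymmetric $U$-statistic of order $d$ of the kind set up in \refS{SSU} --- \iid{} blocks $\tbtau_j$, constraint $S_{N(n)}=n$, kernel $(t_1,\dots,t_d)\mapsto\prod_i\occ_{\gs_i}(t_i)$ summed over increasing index tuples. The central limit theorem for such $U$-statistics from \cite{SJ332,SJ360} gives $\bigpar{V_{N(n)}-\E V_{N(n)}}/n^{d-1/2}\dto\N\bigpar{0,\gamm_\gs}$ with convergence of all moments, for some $\gamm_\gs\ge0$ produced by that theorem. For the centering, the renewal law of large numbers $N(n)/n\to1/\mu$ with $\mu:=\E|\tbtau|$, together with $\E V_N=\binom Nd\prod_{i=1}^{d}\E\occ_{\gs_i}(\tbtau)$ and a routine estimate of the effect of the conditioning, gives $\E\occ_\gs(\bpi_n)=\mu^{-d}\binom nd\prod_{i=1}^{d}\E\occ_{\gs_i}(\tbtau)+O(n^{d-1})$. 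Since $\tbtau$ is uniform on $\fT_k$ given $|\tbtau|=k$, and (as in \refT{TE}) $\E\occ_{\gs_i}(\btau_k)$ depends only on $k$ and $|\gs_i|$, each factor $\E\occ_{\gs_i}(\tbtau)$ depends only on $|\gs_i|$; evaluating $\mu$ and these factors from the explicit generating functions (a short computation using $\phi^2=\phi+1$) identifies the leading coefficient with $\tmu_\gs$ of \eqref{tmugs}. Combined with the second step, $\bigpar{\occ_\gs(\bpi_n)-\tmu_\gs n^d}/n^{d-1/2}\dto\N\bigpar{0,\gamm_\gs}$ with convergence of all moments, which is \eqref{tf1}.

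The last and hardest point is $\gamm_\gs>0$ for $\gs\ne1\cdots d$. Here $\gamm_\gs>0$ is equivalent to $\Var\occ_\gs(\bpi_n)\ge cn^{2d-1}$ for some $c>0$ (the matching upper bound being part of the CLT above). When $\gs=1\cdots d$ we have $V_N=e_d\bigpar{|\tbtau_1|,\dots,|\tbtau_N|}$ up to lower-order terms, and conditioning on $S_{N(n)}=n$ freezes the first power sum of the block sizes, so $V_{N(n)}$ has fluctuations only of order $n^{d-3/2}$ and $\gamm_\gs=0$. For $\gs\ne1\cdots d$ I would instead prove the lower bound via $\Var\occ_\gs(\bpi_n)\ge\Cov\bigpar{\occ_\gs(\bpi_n),T}^2/\Var T$ for a well-chosen statistic of $\bpi_n$, in general a position-weighted linear statistic $T:=\sum_{j=1}^{N(n)}g\bigpar{j/N(n)}\,f(\tbtau_j)$ with $f$ a bounded function on $\fT_*$ and $g$ a function on $\oi$ (for several $\gs$ the choices $g\equiv1$, or $f=|\cdot|$, or $T=N(n)$, already suffice); one arranges that $\Var T$ is of order $n$ while $\Cov\bigpar{\occ_\gs(\bpi_n),T}$ is of order $n^{d}$, whence $\gamm_\gs>0$. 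The position weight is needed because for many $\gs$ (for instance $\gs=21\oplus1$) part of the $n^{2d-1}$-order variance comes from the order of the blocks and from the random number $N(n)$ of blocks, not from symmetric functionals of the block multiset. Producing, for every $\gs\ne1\cdots d$, a pair $(f,g)$ that works, and checking the finitely many small blocks where the obvious choices degenerate, is a case analysis, and this is the step I expect to be the main obstacle.
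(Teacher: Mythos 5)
Your decomposition of $\occ_\gs(\bpi_n)$ into the ``injective'' part $V_{N(n)}$ (each block $\gs_i$ landing in a distinct factor $\tbtau_{j_i}$) and a remainder $R_{N(n)}$, the identification of $V_{N(n)}$ as a conditioned asymmetric \Ustat{} in the \iid{} blocks $\tbtau_j$ with $h(\tau)=|\tau|$, the bound $R_{N(n)}=\OLX(n^{d-1})$, and the evaluation of the centering constant via $\E\occ_{\gs_i}(\tbtau)$ all match the paper's proof. The genuine gap is the positivity of $\gamm_\gs$. You propose to establish $\Var\occ_\gs(\bpi_n)\ge cn^{2d-1}$ by a Cauchy--Schwarz bound against a position-weighted linear statistic $T$, and you yourself flag the required case analysis over all $\gs\ne1\cdots d$ as unfinished; as written, the claim $\gamm_\gs>0$ is asserted rather than proved. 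Moreover, the concern motivating the position weights (variance coming from ``the order of the blocks and from the random number $N(n)$ of blocks'') is already built into the machinery you cite: the renewal CLT for asymmetric \Ustat{s} (\refP{PUN}, inherited by the conditioned version) does not merely produce some $\gamm_\gs\ge0$, it comes with an exact degeneracy criterion, namely $\gamm_\gs=0$ forces $f_i(X)=\frac{\mu}{\nu}h(X)$ \as{} for every $i\in[d]$, where $f_i$ is the $i$-th projection of the kernel.

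With that criterion the positivity step is two lines, which is how the paper does it. Here $f_i(\tau)=\bigpar{\prod_{j\ne i}\E\occ_{\gs_j}(\tbtau)}\occ_{\gs_i}(\tau)$ and $h(\tau)=|\tau|$. If some block has $|\gs_i|\ge2$, then with positive probability $|\tbtau|=1$ and hence $\occ_{\gs_i}(\tbtau)=0$, while with positive probability $\tbtau=\gs_i$ and $\occ_{\gs_i}(\tbtau)=1$; so $\occ_{\gs_i}(\tbtau)$ cannot be \as{} a constant multiple of $|\tbtau|$, the degeneracy condition fails, and $\gamm_\gs>0$. Conversely, $\gs=1\cdots d$ is exactly the case where every $f_i(\tau)$ is a multiple of $|\tau|$. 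Replacing your covariance construction by this check closes the gap; the remainder of your argument then goes through essentially as in the paper.
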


Again, the asymptotic variance $\gamm_\gs$ can in principle be found from
our proof, but we do not know any explicit formula; see \refR{RFVar} and
\refE{E21f}.  

In the exceptional case $\gs=1\cdots d$, the limit in \eqref{tf1} is 0,
and a different normalization is required.

\begin{theorem}
  \label{TFI}
Let $\gsik$ be the identity permutation $1\cdots d$ for some $d\ge2$.
Then,
for some $\gamm_\gsik>0$,
\begin{align}\label{cf1}
\frac{ \occ_\gsik(\bpi_n)-\binom{n}{d}+ 
\frac{5+\sqrt5}{10(d-2)!}\, n^{d-1}}{n^{d-3/2}}
\dto \N\bigpar{0,\gamm_\gsik},
\end{align}
with convergence of all moments.
\end{theorem}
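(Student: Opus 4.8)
The plan is to combine the representation of $\bpi_n$ from \refS{SRT} with the elementary fact that the \emph{identity} pattern $\gsik=1\cdots d$ interacts multiplicatively with the direct sum, and then to feed the resulting sum into the conditioned--sum (renewal--reward) limit theorems of \refS{SSU}. Write $\bpi_n\eqd\tbtau_1\opluss\tbtau_{N(n)}$ as in \refS{SRT}, the $\tbtau_i$ being i.i.d.\ copies of $\tbtau$ and $N(n)$ the number of blocks, pinned down by $\sum_{i=1}^{N(n)}\abs{\tbtau_i}=n$. For a permutation $\rho$ set $f_\rho(x):=\sum_{k\ge0}\occ_{1\cdots k}(\rho)x^k$ (with $\occ_{1\cdots0}(\rho):=1$ and $\occ_{1\cdots1}(\rho)=\abs\rho$); an increasing subsequence of $\rho\oplus\rho'$ is freely an increasing subsequence of $\rho$ followed by one of $\rho'$, so $f_{\rho\oplus\rho'}=f_\rho f_{\rho'}$, and therefore
\begin{equation*}
  \occ_{\gsik}(\bpi_n)=[x^d]\prod_{i=1}^{N(n)}f_{\tbtau_i}(x)=[x^d]\exp\Bigl(\sum_{k\ge1}G_kx^k\Bigr),\qquad G_k:=\sum_{i=1}^{N(n)}a_k(\tbtau_i),
\end{equation*}
where $a_k(\rho):=[x^k]\log f_\rho(x)$; note $a_1(\rho)=\abs\rho$ and $a_2(\rho)=\occ_{12}(\rho)-\abs\rho^2/2$. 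The crucial point is that $a_1(\rho)=\abs\rho$, so $G_1=\sum_i\abs{\tbtau_i}=n$ is \emph{deterministic}: this is exactly why the naive leading term below carries no fluctuation, and why the normalization is $n^{d-3/2}$ rather than the $n^{d-1/2}$ of \refT{TF}.

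Expanding the exponential into complete Bell polynomials and using $G_1=n$,
\begin{equation*}
  \occ_{\gsik}(\bpi_n)=\sum_{\lambda\vdash d}\frac{n^{m_1(\lambda)}}{m_1(\lambda)!}\prod_{k\ge2}\frac{G_k^{m_k(\lambda)}}{m_k(\lambda)!},
\end{equation*}
$m_k(\lambda)$ being the number of parts of $\lambda$ equal to $k$. Each $G_k$ with $k\ge2$ is a renewal--reward sum over the i.i.d.\ blocks, with all moments finite, so by the results of \refS{SSU} (from \cite{SJ332,SJ360}) one has $G_k=c_kn+\OLX(n\qq)$ with $c_k=\E[a_k(\tbtau)]/\E\abs\tbtau$, $\E G_k=c_kn+O(1)$, and $(G_k-\E G_k)/\sqrt n$ jointly asymptotically normal. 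The $\lambda$-term above is of order $n^{\ell(\lambda)}$, where $\ell(\lambda)$ is the number of parts of $\lambda$; since $\ell(\lambda)=d$ only for $\lambda=(1^d)$ (giving $n^d/d!$ exactly) and $\ell(\lambda)=d-1$ only for $\lambda=(2,1^{d-2})$ (giving $G_2n^{d-2}/(d-2)!$, with deterministic part $c_2n^{d-1}/(d-2)!+O(n^{d-2})$ and fluctuation $\OLX(n^{d-3/2})$), while $\ell(\lambda)\le d-2$ for all other $\lambda$, we obtain
\begin{equation*}
  \occ_{\gsik}(\bpi_n)=\frac{n^d}{d!}+\frac{c_2}{(d-2)!}\,n^{d-1}+\frac{n^{d-2}}{(d-2)!}\bigpar{G_2-\E G_2}+O(n^{d-2})+\OLX(n^{d-5/2}).
\end{equation*}

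It remains to match the centring and invoke the central limit theorem. Since $\binom nd=n^d/d!-n^{d-1}/\bigl(2(d-2)!\bigr)+O(n^{d-2})$, subtracting $\binom nd$ and adding $\tfrac{5+\sqrt5}{10}\,n^{d-1}/(d-2)!$ leaves, on the scale $n^{d-1}$, the coefficient $\bigl(c_2+\tfrac12+\tfrac{5+\sqrt5}{10}\bigr)/(d-2)!$; for the left side of \eqref{cf1} to be of order $n^{d-3/2}$ this must vanish, i.e.\ $c_2=-\tfrac12-\tfrac{5+\sqrt5}{10}\;\bigl(=-\tfrac{10+\sqrt5}{10}\bigr)$, an identity between explicit constants that I would verify by a direct (if unenlightening) computation of $\E\occ_{12}(\tbtau)$, $\E\abs\tbtau^2$ and $\E\abs\tbtau$ from the law \eqref{tbtau} of $\tbtau$ (equivalently, one may derive a closed formula for $\E\occ_{\gsik}(\bpi_n)$, as in \refT{TE}, and read off its two leading terms). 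Granting this, the remaining terms are $\op(n^{d-3/2})$ (and $\OLX$-small after scaling by $n^{d-3/2}$), so
\begin{equation*}
  \frac{\occ_{\gsik}(\bpi_n)-\binom nd+\frac{5+\sqrt5}{10(d-2)!}\,n^{d-1}}{n^{d-3/2}}=\frac1{(d-2)!}\cdot\frac{G_2-\E G_2}{\sqrt n}+\op(1),
\end{equation*}
and the conditioned central limit theorem of \refS{SSU} gives $(G_2-\E G_2)/\sqrt n\dto\N(0,v)$, with convergence of all moments, where $v=\Var\bigpar{a_2(\tbtau)-c_2\abs\tbtau}/\E\abs\tbtau$ (the $\abs\tbtau$-direction being annihilated by the conditioning $\sum_i\abs{\tbtau_i}=n$). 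This proves \eqref{cf1} with $\gamm_\gsik=v/\bigl((d-2)!\bigr)^2$, the moment convergence coming from the moment form of that CLT together with the $\OLX$ remainder bounds; and $\gamm_\gsik>0$ because $a_2(\tbtau)$ is not an a.s.\ constant multiple of $\abs\tbtau$ — indeed $a_2(\rho)-c_2\abs\rho$ takes the two distinct values $-\tfrac12-c_2$ and $-2-2c_2$ (both with positive probability) on the tree permutations $1$ and $21$ of lengths $1$ and $2$, and $c_2\ne-\tfrac32$.

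The step I expect to be the main obstacle is the input from \refS{SSU}: one must know that $G_2$, a sum over the i.i.d.\ blocks whose number is pinned down by $\sum_i\abs{\tbtau_i}=n$, nonetheless obeys a $\sqrt n$-scale central limit theorem with conditional mean $c_2n+O(1)$ and with the length-orthogonalized asymptotic variance $v$ above — this is precisely what allows the degenerate leading term $n^d/d!$ to be superseded by the genuinely fluctuating second term. The rest — controlling the subleading Bell-polynomial terms and verifying the numerical identity $c_2=-\tfrac12-\tfrac{5+\sqrt5}{10}$ that pins down the constant $\tfrac{5+\sqrt5}{10}$ — is routine.
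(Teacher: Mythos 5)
Your proof is correct, but it takes a genuinely different route from the paper's. The paper argues by complementary counting: starting from $\sum_{\gs\in\fS_d}\occ_\gs(\bpi_n)=\binom nd$, it discards every pattern with at most $d-2$ blocks as negligible at scale $n^{d-3/2}$ by \refT{TF}, and applies \refT{TF} jointly to the sum over the $d-1$ remaining patterns (one block $21$ and $d-2$ singletons, each with $\tmu_\gs=\frac{5+\sqrt5}{10\,(d-1)!}$), so that $\occ_{\gsik}(\bpi_n)=\binom nd-\sum_{\gs\ne\gsik}\occ_\gs(\bpi_n)$ inherits their CLT. You instead use the multiplicativity $f_{\rho\oplus\rho'}=f_\rho f_{\rho'}$ and the exponential formula, which makes the degeneracy transparent ($G_1=n$ is deterministic under the pinning $S_{N(n)}=n$) and reduces the whole theorem to the single one-dimensional conditioned renewal sum $G_2$, i.e.\ to \refR{R1} together with \refP{PUN2} and \refP{PUNmom} with $f=a_2$ and $h=|\cdot|$. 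I checked your numerics: since $\occ_{21}(\tau)=|\tau|-1$ on tree permutations, $a_2(\tau)=\binom{|\tau|}{2}-(|\tau|-1)-\tfrac12|\tau|^2=1-\tfrac32|\tau|$ identically, whence $c_2=(\E|\tbtau|)^{-1}-\tfrac32=-\tfrac{10+\sqrt5}{10}$ as you claim, and $\tfrac32+c_2=\nu^{-1}$, so your variance is $\gamm_{\gsik}=\nu^{-3}\Var|\tbtau|/((d-2)!)^2=3\cdot5^{-3/2}/((d-2)!)^2$ for every $d$ --- an explicit formula the paper obtains only for $d=2$ (\refE{E21f}), and a genuine bonus of your method; your positivity argument is also sound (and is immediate from $a_2(\tau)=1-\tfrac32|\tau|$ with $|\tbtau|$ non-degenerate). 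Two small repairs: center $G_2$ at $c_2n$ rather than at $\E G_2$, since that is what \refP{PUN2} provides and the claim $\E G_2=c_2n+O(1)$ under the conditioning is not free (though $\E G_2-c_2n=o(\sqrt n)$ does follow from the moment convergence in \refP{PUNmom}); and the bound on the partitions with $\ell(\lambda)\le d-2$ should be stated as $\OLX(n^{d-2})$ via $G_k=\OLX(n)$, Minkowski and H\"older, so that it also survives into the moment convergence.
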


\begin{remark}\label{RCW}
If we consider several patterns, 
\eqref{tt}, \eqref{ctt}, \eqref{tf1} and \eqref{cf1} extend
to joint convergence to a multi-variate normal limit.
This follows by the same proof, using \refRs{Rmulti} and \ref{Rmulti2}.
We omit the details. 
\end{remark}

\section{Preliminaries on tree and forest permutations}
\label{Sforest}
We recall some facts from (mainly) \cite{AH} (in our notation); for
completeness we sometimes sketch the arguments, but we refer to \cite{AH}
for further details.

Note first that a permutation is determined by its (labelled) permutation
graph, in other words, the mapping $\pi\mapsto G_\pi$ is injective.
Furthermore, the induced subgraphs of $G_\pi$ are the 
inversion graphs of the patterns occuring in $\pi$, up to obvious
relabelling.

In particular, it is easily seen that the only induced cycles in a
permutation graph are $\sfC_3$ and $\sfC_4$ (as unlabelled graphs);
these are the permutation graphs of $321$ and $3412$ 
(and no other permutations), 
which proves \refP{P1}.


Moreover, $G_\pi$ is a forest if and only if its component are trees,
and thus $\pi$ is a forest permutation if and only
its blocks are tree permutations. In other words,
\begin{align}\label{tomtam}
  \pi\in\fF_*\iff \pi=\tautaum
\end{align}
for some (unique) sequence $\tau_1,\dots,\tau_m$ of tree permutations.
(We will find the asymptotic distribution of the number of blocks in a
random forest permutation in \refT{Tblocks}.)

Let $t_n:=|\fT_n|$
be the number of tree permutations of length $n$.
It is shown in \cite{AH} that 
\begin{align}\label{tn}
  t_n
=
  \begin{cases}
    1,&n=1,\\
2^{n-2},&n\ge 2,
  \end{cases}
\end{align}
and thus the corresponding generating fuction $T(z)$ is
\begin{align}\label{Tz}
  T(z):=\sumn t_nz^n = z+\frac{z^2}{1-2z} =\frac{z-z^2}{1-2z},
\qquad |z|<1/2.
\end{align}
As a consequence of \eqref{Tz} and \eqref{tomtam}, 
if $f_n$ is the number of forest permutations of length
$n$ (with $f_0:=1$), then the corresponding generating function is
\begin{align}
  F(z):=\sumno f_nz^n = \frac{1}{1-T(z)}
=\frac{1-2z}{1-3z+z^2}.
\end{align}
The sequence $(f_n)$ is A001519 in \cite{OEIS}
(where many other interpretations are given).

\medskip
In a permutation $\pi$, label the left-to-right maxima by $\sfL$, and the
right-to-left minima by $\sfR$. Thus, $i$ is labelled $\sfL$ 
if $\pi(j)<\pi(i)$ for every $j<i$, \i.e., if there are no inversions $(j,i)$
with $j<i$. In other words,
\begin{align}\label{L}
  \text{$i$ is labelled $\sfL$} \iff
\text{$i$ is the left endpoint of every adjacent edge in $G_\pi$}.
\end{align}
Similarly,
 $i$ is labelled $\sfR$ 
if there are no inversions $(i,j)$ with $j>i$, and
\begin{align}\label{R}
  \text{$i$ is labelled $\sfR$} \iff
\text{$i$ is the right endpoint of every adjacent edge in $G_\pi$}.
\end{align}

Now, let $\pi$ be a forest permutation.
If $i<j<k$, then $ij$ and $jk$ cannot both be edges in $G_\pi$, since
otherwise, $\pi(i)>\pi(j)>\pi(k)$, so $(i,k)$ would also be an inversion,
and thus $G_\pi$ would contain a cycle $ijk$. 
If follows that every $j\in[n]$ is labelled either $\sfL$ or $\sfR$, or
possibly both.

Moreover, \eqref{L}--\eqref{R} imply that $i$ is labelled both $\sfL$ and
$\sfR$ if and only if $i$ is isolated in $G_\pi$.
In a tree permutation $\pi$ with $|\pi|\ge2$, this is impossible.
Thus, 
if $\pi$ is a tree permutation with $|\pi|\ge2$, then
every $i\in[n]$ is labelled $\sfL$ or $\sfR$, but not both.
Each tree permutation $\tau$ with $|\tau|=n\ge2$
may thus be represented by a string $\gO_\tau$ of $n$ symbols $\sfL$ or
$\sfR$.
(The notation in \cite{AH} is different: 
there $W_1$ [$W_0$] denotes the set of $i$ labelled $\sfL$ [$\sfR$] here.)
The first symbol in $\gO_\tau$
is always $\sfL$ and the last is $\sfR$. 
We let 
$\gS_n:=\bigset{\sfL\set{\sfL,\sfR}^{n-2}\sfR}$ be the set of such strings,
so $\gO_\tau\in\gS_n$.
It is shown in \cite{AH} that
the map 
$\tau\mapsto \gO_\tau$ 
is a bijection between 
$\fT_n$ and  $\gS_n$, for every $n\ge2$.
(Note that $|\fT_n|=2^{n-2}=|\gS_n|$ by \eqref{tn}.)
In other words, for $n\ge2$, the tree permutations in $\fT_n$ can be encoded
by the strings in $\gS_n$.

We follow \cite{AH} and define the \emph{blocks} $B_1,\dots,B_{2m}$ of $\gO_\tau$
as the successive runs of $\sfL$ and $\sfR$ in $\gO_\tau$.
Note that since $\gO_\tau$ begins with $\sfL$ and ends with $\sfR$,
there is always an even number of blocks; an
odd-numbered block 
$B_{2l-1}$ is a run of $\sfL$ and an even-numbered block $B_{2l}$ is a run
of $\sfR$.
(Note that we also  use 'block' in a different sense for the block decomposition
of a  permutation into its blocks (components) in \refSS{SSblocks}; 
there should be no risk of confusion since 
the two different meanings of 'block'
appear in different contexts,
and we will not use both at the same time.)
\cite[Lemma 8]{AH}
shows how the edges and vertex degrees in $G_\tau$ can be found 
explicitly from the
code $\gO_\tau$ and the blocks $B_i$.
We summarize this as follows.

\begin{lemma}[\cite{AH}]  \label{Lleaf}
Let $\tau$ be a tree permutation with $|\tau|\ge2$.
Then the pairs of symbols in $\gO_\tau$ that 
correspond to edges in $G_\tau$ (and thus to inversions in $\tau$)
are:
\begin{PXenumerate}e
\item\label{e1} each $\sfL$ and the nearest following $\sfR$;
\item\label{e2} each $\sfR$ and the nearest preceding $\sfL$;
\item\label{e3} The last $\sfL$ in a block $B_{2k-1}$ and the first $\sfR$
  in $B_{2k+2}$. 
\end{PXenumerate}
 The  symbols in $\gO_\tau$ that correspond to leaves 
in $G_\tau$  are the following:
\begin{PXenumerate}{l}
\item\label{il1} every $\sfL$ that is not the last $\sfL$ in its block;
\item the last but one symbol, if that is $\sfL$;
\item every $\sfR$ that is not the first $\sfR$ in its block;
\item\label{il4} the second symbol, if that is $\sfR$.
\end{PXenumerate}
\end{lemma}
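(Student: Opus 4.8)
The plan is to reduce everything to identifying the edge set of $G_\tau$; once the edges are known, the leaf statement is just a matter of reading off which vertices have exactly one incident edge. Throughout write $n=|\tau|\ge2$, and recall the facts already available: every position of $\tau$ is labelled $\sfL$ or $\sfR$ but not both; $\gO_\tau$ begins with $\sfL$ and ends with $\sfR$; since $G_\tau$ is a connected tree, $\tau$ is indecomposable, i.e.\ there is no $p\in\{1,\dots,n-1\}$ with $\{\tau(1),\dots,\tau(p)\}=\{1,\dots,p\}$ (no ``cut point''); and by \eqref{L}--\eqref{R}, if $ij$ is an edge with $i<j$ then $i$ is labelled $\sfL$ and $j$ is labelled $\sfR$. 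A first observation, used repeatedly, is that two \emph{consecutive} positions labelled $\sfL$ then $\sfR$ always form an edge: otherwise that $\sfR$-value would be below everything after it while the preceding $\sfL$-value is above everything before it, which produces a cut point, contradicting indecomposability.

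For \ref{e1}, let $i$ be an $\sfL$-position and $j$ the first $\sfR$-position after it, so the positions strictly between are all $\sfL$. Since $G_\tau$ is a tree on $n\ge2$ vertices, $i$ has a neighbour, and each neighbour is an $\sfR$-position $>i$, hence $\ge j$. If the least neighbour is $j$ we are done; if it is some $k>j$, then the forest property applied to $i<j<k$ shows that $ij$ or $jk$ is an edge, and $jk$ cannot be an edge because $j$ is labelled $\sfR$ and so can never be a left endpoint; hence $ij$ is an edge. Claim \ref{e2} then follows from the reverse--complement symmetry $\tau\mapsto\widehat\tau$, $\widehat\tau(i)=n+1-\tau(n+1-i)$, which carries tree permutations to tree permutations, interchanges $\sfL\leftrightarrow\sfR$ and ``following''$\leftrightarrow$``preceding'', and fixes the configuration of \ref{e3}.

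The delicate point is \ref{e3}. I would first record the identity $\tau(i)=i+\deg_{G_\tau}(i)$ at every $\sfL$-position and $\tau(i)=i-\deg_{G_\tau}(i)$ at every $\sfR$-position (count the values $\le\tau(i)$, resp.\ $\ge\tau(i)$, using that all neighbours of an $\sfL$-position lie to its right and all neighbours of an $\sfR$-position to its left). Now let $i$ be the last $\sfL$ in a run $B_{2l-1}$, let $j$ be the first $\sfR$ in $B_{2l+2}$, and let $r$ be the last position of the intervening $\sfR$-run $B_{2l}$. By \ref{e2}, $i$ is adjacent to all of $B_{2l}$, so $\deg(i)\ge|B_{2l}|$ and hence $\tau(i)\ge r$; by \ref{e1}, $j$ is adjacent to all of the intervening $\sfL$-run $B_{2l+1}$, whence $\tau(j)\le r+1$. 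If $ij$ were \emph{not} an edge then $\tau(i)<\tau(j)$, forcing $\tau(i)=r$, $\tau(j)=r+1$, and in particular $\deg(i)=|B_{2l}|$, so that the neighbours of $i$ are exactly $B_{2l}$; but then every value $\le r$ occurs at a position $\le r$, i.e.\ $\{\tau(1),\dots,\tau(r)\}=\{1,\dots,r\}$, a cut point — contradiction. So $ij$ is an edge. To conclude, I would count: for each $l$ the pairs in \ref{e1}--\ref{e2} lying in $B_{2l-1}\times B_{2l}$ number $|B_{2l-1}|+|B_{2l}|-1$ (the $-1$ from the pair ``last $\sfL$ of $B_{2l-1}$, first $\sfR$ of $B_{2l}$'', which is listed in both), and \ref{e3} contributes one further pair for each of the $m-1$ admissible $l$; all listed pairs are distinct, so there are $\sum_l(|B_{2l-1}|+|B_{2l}|-1)+(m-1)=n-1$ of them. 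As $G_\tau$ is a tree on $n$ vertices it has exactly $n-1$ edges, and each listed pair has just been shown to be an edge, so the listed pairs are precisely the edges.

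Finally, the leaf statement is bookkeeping from the edge description, a vertex being a leaf iff it has a unique incident edge. An $\sfL$-position not last in its run is adjacent (by \ref{e1}) to the first $\sfR$ of the following run and to nothing else, since a second neighbour, together with the last $\sfL$ of the run (adjacent to both by \ref{e2} and the forest property), would create an induced $C_4$; the penultimate position, when labelled $\sfL$, is adjacent only to position $n$; symmetric statements hold for $\sfR$-positions not first in their run and for the second position when labelled $\sfR$; and from \ref{e1}--\ref{e3} every remaining position has degree $\ge2$. I expect the main obstacle to be exactly step \ref{e3} — in particular excluding the ``off-by-one'' near-miss $\tau(i)=\tau(j)-1$ via the cut-point argument; the remaining parts are structural bookkeeping, much of it already in \cite{AH}.
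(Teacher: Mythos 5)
Your proof is correct, and it is genuinely different from what the paper does: the paper gives no argument at all for this lemma, deferring entirely to \cite[Lemma 8]{AH} with the remark that only the notation differs. You instead supply a complete, self-contained derivation, and its three key ingredients all check out: (a) the degree identities $\tau(i)=i+\deg(i)$ at $\sfL$-positions and $\tau(i)=i-\deg(i)$ at $\sfR$-positions; (b) the cut-point/indecomposability argument that rules out the near-miss $\tau(i)=r$, $\tau(j)=r+1$ in \ref{e3} (the values at positions $1,\dots,r$ are then all $\le r$, giving a decomposition of $\tau$ and hence a disconnected $G_\tau$); and (c) the count $\sum_l(|B_{2l-1}|+|B_{2l}|-1)+(m-1)=n-1$, which matches the number of edges of a tree on $n$ vertices and so converts ``every listed pair is an edge'' into ``the listed pairs are exactly the edges'' without having to exclude non-listed pairs directly. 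What your route buys is independence from \cite{AH}; what the paper's route buys is brevity. Two small remarks: the betweenness fact you invoke for $i<j<k$ with $ik$ an edge is a property of all permutation graphs (stated in \refSS{SSblocks}), not specifically of forests, so calling it the ``forest property'' is a slight misnomer; and in the leaf part the induced-$\sfC_4$ justification is superfluous --- once the edge list is known to be complete, the degree of every position can simply be read off (an $\sfL$ not last in its block gets exactly one edge from \ref{e1} and none from \ref{e2} or \ref{e3}; the last $\sfL$ of $B_{2l-1}$ has degree $|B_{2l}|+\indic{l<m}$, which equals $1$ precisely in the penultimate-position case), which is cleaner and avoids any worry about whether the $\sfC_4$ argument is airtight.
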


\begin{proof}
  As said above, this is 
\cite[Lemma 8]{AH}, in different notation.
(The four cases \ref{il1}--\ref{il4}
correspond to parts (a),(c),(d),(f) in that lemma.)
\end{proof}

If $\gs$ is a tree permutation with $|\gs|\ge2$ such that its code $\gO_\gs$
has $2m$ blocks, we define $\gbb(\gs):=m$; in other words the code of $\gs$
has $\gbb(\gs)$ $\sfL$-blocks and $\gbb(\gs)$ $\sfR$-blocks.
If $|\gs|=1$, we do not define any code $\gO_\gs$, but we define (for later
convenience) $\gbb(\gs):=1$.

\section{Preliminaries on \Ustat{s}}\label{SSU}

A 
\emph{\Ustat} is a random variable of the form
\begin{equation}\label{U}
  U_n
=U_n(f) 
= \sum_{1\le i_1<\dots<i_d\le n} f\bigpar{X_{i_1},\dots,X_{i_d}},
\qquad n\ge0,
\end{equation}
where $X_1,X_2,\dots$ is an \iid{} sequence of random variables with values
in some measurable space $\cS$, and $f:\cS^d\to\bbR$ is a
given measurable function of $d\ge1$ variables.
(It is often assumed that $f$ is a symmetric function; we do not assume this.)
\Ustat{s} were introduced by \citet{Hoeffding};
we will use versions and results from \cite{SJ332} and \cite{SJ360},
see also \cite{SJ333} 
for similar applications to 
pattern occurences in some other pattern classes.

The fundamental central limit theorem for \Ustat{s}, due to
\citet{Hoeffding} in the symmetric case, 
can in the general (asymmetric) case be stated as follow, 
see 
\cite[Theorem 11.20]{SJIII} and 
\cite[Corollary 3.5 and (moment convergence) Theorem 3.15]{SJ332}. 
Assume that the random variables $X_i$ are \iid, let $X$ denote a
generic $X_i$, and define (for a given $f$)
\begin{align}
  \mu&:=\E f(X_1,\dots,X_d), \label{Umu}
\\
f_i(x)&:=\E\bigsqpar{f(X_1,\dots,X_d)\mid X_i=x}, \label{fi}
\\
\ggs_{ij}&:=\Cov \bigsqpar{f_i(X),f_j(X)},\label{ggsij}
\\\label{Ugss}
      \ggss&:=
\sum_{i,j=1}^d
\frac{(i+j-2)!\,(2d-i-j)!}{(i-1)!\,(j-1)!\,(d-i)!\,(d-j)!\,(2d-1)!}\ggs_{ij}
.
\end{align}
Note that $f_i(x)$ in \cite{SJ332,SJ360} is  $f_i(x)-\mu$ in the present
notation.

\begin{prop}[\cite{SJIII,SJ332}]\label{PU}
Suppose that $(X_i)\xoo$ are \iid{} random variables, and that
$\E|f(X_1,\dots,X_d)|^2<\infty$.
Then, 
with the notation in \eqref{Umu}--\eqref{Ugss},
as \ntoo,
\begin{equation}\label{c1}
  \frac{U_n-\binom nd \mu}{n^{d-1/2}} \dto \N\bigpar{0,\ggss}.
\end{equation}
Furthermore, $\ggss>0$ unless $f_i(X)=\mu$
\as{} for $i=1,\dots,d$.

Moreover, if $\E|f(X_1,\dots,X_d)|^p<\infty$ for some $p\ge2$, 
the \eqref{c1} holds with convergence of all moments of order $\le p$.
\qed
\end{prop}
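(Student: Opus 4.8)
The plan is to prove \refP{PU} by the classical Hoeffding--H\'ajek projection method. First I would reduce to the case $\mu=0$: replacing $f$ by $f-\mu$ turns $U_n$ into $U_n-\binom nd\mu$ and leaves every $\ggs_{ij}$ and $\ggss$ unchanged, so from now on assume $\E f(X_1,\dots,X_d)=0$, whence $\E f_i(X)=0$ for each $i$. Then I would form the H\'ajek projection $\hat U_n:=\sum_{k=1}^n\E\bigsqpar{U_n\mid X_k}$. A set $\set{i_1<\dots<i_d}$ contributes to $\E\bigsqpar{U_n\mid X_k}$ only when $k=i_\ell$ for some $\ell$, in which case it contributes $f_\ell(X_k)$ (sets not containing $k$ contribute $\mu=0$); counting the $d$-subsets of $[n]$ that place $k$ in position $\ell$ gives
\begin{equation*}
\hat U_n=\sum_{k=1}^n\sum_{\ell=1}^d\binom{k-1}{\ell-1}\binom{n-k}{d-\ell}\,f_\ell(X_k),
\end{equation*}
a sum of independent, mean-zero, non-identically-distributed summands.

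The argument then has two parts. First, the standard second-moment estimate $\Var\bigpar{U_n-\hat U_n}=O(n^{2d-2})$ — obtained by expanding $\Var U_n-\Var\hat U_n$ over pairs of $d$-subsets, where after cancellation of the leading contribution the surviving pairs share at least two indices and so number $O(n^{2d-2})$ — gives $n^{-(d-1/2)}\bigpar{U_n-\hat U_n}\pto0$, so it suffices to prove the CLT for $n^{-(d-1/2)}\hat U_n$. Second, for that I would apply the Lindeberg--Feller theorem to the triangular array of independent summands: the Lindeberg condition is immediate since $\E f_\ell(X)^2\le\E f(X_1,\dots,X_d)^2<\infty$ and the coefficients $\binom{k-1}{\ell-1}\binom{n-k}{d-\ell}$ are uniformly $O(n^{d-1})$, while the limiting variance comes from the Riemann-sum asymptotics
\begin{equation*}
\frac1{n^{2d-1}}\sum_{k=1}^n\binom{k-1}{i-1}\binom{n-k}{d-i}\binom{k-1}{j-1}\binom{n-k}{d-j}\longrightarrow\int_0^1\frac{t^{i-1}(1-t)^{d-i}}{(i-1)!\,(d-i)!}\cdot\frac{t^{j-1}(1-t)^{d-j}}{(j-1)!\,(d-j)!}\,\dd t,
\end{equation*}
whose right-hand side is the Beta integral $\frac{(i+j-2)!\,(2d-i-j)!}{(i-1)!\,(j-1)!\,(d-i)!\,(d-j)!\,(2d-1)!}$. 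Summing these limits against $\ggs_{ij}$ reproduces exactly the coefficient pattern of $\ggss$ in \eqref{Ugss}.

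The positivity assertion falls out of the same identity, which rewrites
\begin{equation*}
\ggss=\int_0^1\Var\biggpar{\sum_{\ell=1}^d\frac{t^{\ell-1}(1-t)^{d-\ell}}{(\ell-1)!\,(d-\ell)!}\,f_\ell(X)}\dd t\ge0,
\end{equation*}
so $\ggss=0$ forces, for a.e.\ $t$, the inner combination of the $f_\ell(X)$ to be \as{} constant; since its coefficients are linearly independent polynomials in $t$, this is equivalent to $f_\ell(X)=\mu$ \as{} for every $\ell$, the stated criterion. For convergence of moments I would invoke the known Rosenthal/Marcinkiewicz--Zygmund-type moment bound for centered \Ustat{s} (as in \cite{SJ332}), namely $\E\bigabs{U_n-\binom nd\mu}^p=O\bigpar{n^{(d-1/2)p}}$ when $\E\abs{f}^p<\infty$; this makes $\bigabs{n^{-(d-1/2)}(U_n-\binom nd\mu)}^q$ uniformly integrable over $n$ for every $q<p$, so the distributional convergence in \eqref{c1} automatically upgrades to convergence of all moments of order $\le p$.

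I expect the main obstacle to be bookkeeping rather than anything conceptual. Because $f$ is not assumed symmetric, the position index $\ell$ of the distinguished coordinate must be carried through the entire variance expansion — this is exactly what produces the asymmetric weights $(i+j-2)!\,(2d-i-j)!/\cdots$ rather than Hoeffding's symmetric ones — and one must check carefully that every intersection pattern with two or more common indices truly contributes only $O(n^{2d-2})$. Once the projection and the Beta-integral identity are set up, the remainder is routine.
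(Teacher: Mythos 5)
The paper does not prove \refP{PU} at all --- it is quoted from \cite{SJIII} and \cite{SJ332} and stated with an immediate \textup{qed} --- so there is no internal proof to compare against. Your outline is the standard Hoeffding--H\'ajek projection proof of the asymmetric $U$-statistic CLT, which is essentially the argument behind the cited sources, and the key computations check out: the projection formula $\hat U_n=\sum_k\sum_\ell\binom{k-1}{\ell-1}\binom{n-k}{d-\ell}f_\ell(X_k)$ is correct; the Riemann-sum/Beta-integral identity
\begin{equation*}
\int_0^1\frac{t^{i+j-2}(1-t)^{2d-i-j}}{(i-1)!\,(j-1)!\,(d-i)!\,(d-j)!}\dd t
=\frac{(i+j-2)!\,(2d-i-j)!}{(i-1)!\,(j-1)!\,(d-i)!\,(d-j)!\,(2d-1)!}
\end{equation*}
reproduces the weights in \eqref{Ugss} exactly; and the resulting representation $\ggss=\int_0^1\Var\bigpar{\sum_\ell c_\ell(t)f_\ell(X)}\dd t$ with the Bernstein-type coefficients $c_\ell(t)=t^{\ell-1}(1-t)^{d-\ell}/((\ell-1)!\,(d-\ell)!)$ gives both $\ggss\ge0$ and the degeneracy criterion, since one can pick $d$ values $t_1,\dots,t_d$ in the full-measure set where the variance vanishes so that the matrix $(c_\ell(t_m))$ is invertible, forcing $f_\ell(X)=\mu$ \as{} for every $\ell$.

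Two small points. First, $\Var(U_n-\hat U_n)=\Var U_n-\Var\hat U_n$ uses that $\hat U_n$ is the orthogonal projection onto sums of single-variable functions, and the ``cancellation'' of the pairs sharing exactly one index against $\Var\hat U_n$ holds only up to a relative error $O(1/n)$ in the counting of such pairs; that error is again $O(n^{2d-2})$, so the bound stands, but it is worth saying. Second --- the only place the argument falls short of the statement verbatim --- boundedness of $\E|Z_n|^p$ together with $Z_n\dto Z$ yields convergence of moments of order $q<p$ only; the boundary case $q=p$ claimed in the proposition requires an extra step (uniform integrability of $|Z_n|^p$ itself, via truncation or as carried out in \cite{SJ332}). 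Since the present paper only invokes moment convergence under ``all $p<\infty$'' hypotheses, this boundary case is harmless for the applications, but as a proof of the proposition as stated it is a minor gap.
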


We will need a renewal theory version of \refP{PU}.
In addition to a sequence $(X_i)\xoo$ and a function $f$ as above, let
$h:\cS\to\bbR$ be another measurable function, and
assume (for simplicity) that $h(X_i)\ge0$ a.s.
Define 
\begin{align}
\nu&:=\E h(X_i), \label{nu}
\\ \label{Snh}
S_n& \phantom:
=S_n(h):=\sumin h(X_i),   
\end{align}
and let for each $x>0$
\begin{align}
N(x)&:=\inf\set{N:S_N\ge x}
. \label{Nx}
\end{align}

\begin{remark}\label{RNN}
  The definition \eqref{Nx} agrees with $N_+(x)$ in \cite{SJ333} but
differs slightly from $N_+(x)$ and $N_-(x)$ in \cite{SJ332} and \cite{SJ360};
this does not affect the asymptotic results used here, 
see \cite[Remark 3.19]{SJ360}.
(For integer valued $h$ and integer $x$, as in our application,
$N(x)=N_+(x-1)$.)
We will use results from \cite{SJ332} and \cite{SJ360}; note that
the event $\set{S_k=n \text{ for some }k\ge0}$ equals
\set{S_{N(n)}=n} in the present notation, and $\set{U_{N_-(n)}=n}$ in the
notation of \cite{SJ332} and \cite{SJ360}.
(When we condition on this event in propositions below, 
we tacitly consider only $n$ such that
the event has positive probability.)
\end{remark}

The following results are special cases of
\cite[Theorems  3.11, 3.13(iii) and 3.18]{SJ332} (with somewhat different
notation). 

\begin{prop}[\cite{SJ332}]\label{PUN}
Suppose that 
$(X_i)$ are \iid,
$\E|f(X_1,\dots,X_d)|^2<\infty$, 
and
$h(X)\ge0$ \as, with $\mux:=\E h(X)>0$
and $\E h(X)^2<\infty$.
Then, with notations as above, 
as \xtoo,
\begin{equation}\label{cvtau}
\frac{U_{N(x)}-\mu{\mux}^{-d}{d!}\qw x^{d}}
{x^{d-1/2}} 
\dto \N\bigpar{0,\gamx^2},
\end{equation}
where,
with $\gss$ given by \eqref{Ugss},
\begin{align}\label{cvtau2}
    \gamx^2
&:=
{\mux}^{1-2d}
\ggss
-2\frac{\mu{\mux}^{-2d}}{(d-1)!\,d!}\sum_{i=1}^{d}
\Cov\bigsqpar{f_i(X),h(X)}
+ \frac{\mu^2{\mux}^{-2d-1}}{(d-1)!^2}\Var\bigsqpar{h(X)}.
\end{align}
Moreover,
$\gamx^2>0$ unless  $f_i(X)=\frac{\mu}{\nu} h(X)$ \as{} for $i=1,\dots,d$.
\qed  
\end{prop}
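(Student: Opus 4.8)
The plan is to reduce Proposition~\ref{PUN} to the ordinary \Ustat{} central limit theorem (\refP{PU}) by an Anscombe‑type substitution of the renewal stopping time; this is the route along which the cited results \cite[Theorems 3.11, 3.13(iii), 3.18]{SJ332} are proved, and one may simply quote them, but the mechanism is as follows. Since $h(X)\ge0$ and $\mux=\E h(X)>0$, the strong law gives $N(x)/x\to\mux\qw$ \as, and, because $\E h(X)^2<\infty$, standard renewal theory upgrades this to the expansion
\[
N(x)-\mux\qw x=-\mux\qw\sum_{k\le \mux\qw x}\bigpar{h(X_k)-\mux}+\op\bigpar{x\qq},
\]
an $\OL2(x\qq)$ quantity that is itself asymptotically normal. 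I would also record the elementary exact identity $\Cov\bigpar{U_n(f),S_n(h)}=\binom nd\sum_{i=1}^d\Cov\bigsqpar{f_i(X),h(X)}$, which holds because in $\Cov(f(X_{i_1},\dots,X_{i_d}),h(X_k))$ only the term $k\in\set{i_1,\dots,i_d}$ survives and conditioning gives $\Cov[f_i(X),h(X)]$.

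Next I would split
\[
U_{N(x)}-\mu\mux^{-d}(d!)\qw x^{d}
=\Bigpar{U_{N(x)}-\tbinom{N(x)}{d}\mu}+\Bigpar{\tbinom{N(x)}{d}\mu-\mu\mux^{-d}(d!)\qw x^{d}}
\]
and analyse the two brackets. For the first, an Anscombe argument — using a maximal inequality for \Ustat{s} to show that $U_m-\binom md\mu$ oscillates only by $\op\bigpar{x^{d-1/2}}$ as $m$ ranges over $\abs{m-\mux\qw x}\le\eps x$ — permits replacing $N(x)$ by $\floor{\mux\qw x}$, after which \refP{PU} gives a centred normal limit; with the scaling $(\mux\qw x)^{d-1/2}\sim\mux^{-(d-1/2)}x^{d-1/2}$ this contributes the term $\mux^{1-2d}\ggss$ to the variance in \eqref{cvtau2}. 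For the second bracket, the Taylor expansion $\tbinom{N(x)}{d}=\frac1{d!}(\mux\qw x)^{d}+\frac1{(d-1)!}(\mux\qw x)^{d-1}\bigpar{N(x)-\mux\qw x}+\op\bigpar{x^{d-1/2}}$ (after multiplication by $\mu$; the lower‑order and deterministic corrections are $O(x^{d-1})$) combined with the renewal expansion above turns it into $-\frac{\mu}{(d-1)!}\mux^{-d}x^{d-1}\sum_{k\le \mux\qw x}\bigpar{h(X_k)-\mux}+\op\bigpar{x^{d-1/2}}$, whose rescaled variance is $\frac{\mu^2\mux^{-2d-1}}{(d-1)!^2}\Var\bigsqpar{h(X)}$, the third term of \eqref{cvtau2}. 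Since the first bracket (equivalently $U_m-\binom md\mu$ at $m=\floor{\mux\qw x}$) and the sum in the second bracket are built from the same $X_i$, their covariance is read off from the identity above as $-\frac{2\mu\mux^{-2d}}{(d-1)!\,d!}\sum_{i=1}^d\Cov\bigsqpar{f_i(X),h(X)}$ after rescaling, the middle term; the pair is jointly asymptotically normal, so the sum converges to $\N\bigpar{0,\gamx^2}$ with $\gamx^2$ exactly \eqref{cvtau2}. Convergence of moments follows by the same steps once the $L^2$ hypotheses are strengthened to $L^p$ where needed (they enter only through uniform integrability of the rescaled $q$‑th powers).

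For the positivity claim, note that $\gamx^2$ is by construction the variance of the Gaussian limit, which is the limit of $x^{-1/2}$ times a sum $\sum_{k\le\mux\qw x}g(X_k)$, where $g$ is a fixed linear combination of the centred Hoeffding projections $f_i(X)-\mu$ and of $h(X)-\mux$, the weights being those implicit in \eqref{Ugss} and \eqref{cvtau2}. This variance vanishes if and only if every $f_i(X)$ is \as{} the same affine function of $h(X)$; matching the means ($\E f_i(X)=\mu$, $\E h(X)=\mux$) forces that function to be $t\mapsto\frac{\mu}{\mux}t$, i.e.\ $f_i(X)=\frac{\mu}{\mux}h(X)$ \as{} for $i=1,\dots,d$, the stated exception — and substituting this back makes all three terms of \eqref{cvtau2} cancel, confirming sufficiency.

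I expect the one genuinely delicate point to be making the Anscombe substitution rigorous with error $\op\bigpar{x^{d-1/2}}$ and with enough uniform control to reach convergence of moments: this needs a maximal/Doob‑type inequality for the oscillation of $U_m-\binom md\mu$ over windows $\abs{m-\mux\qw x}\le\eps x$ together with the corresponding $L^p$ bounds. These are precisely what the general \Ustat{} machinery of \cite{SJ332} (and \cite{SJ360}) supplies, which is why in the paper we invoke those references rather than reproduce the argument here.
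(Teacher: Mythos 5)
Your reconstruction of the limit \eqref{cvtau} itself is sound and follows the same route as the cited proofs in \cite{SJ332} (the paper offers no proof of its own, only the citation): the splitting into $U_{N(x)}-\binom{N(x)}{d}\mu$ plus $\binom{N(x)}{d}\mu-\mu\mux^{-d}(d!)^{-1}x^{d}$, the Anscombe replacement of $N(x)$ by $\floor{\mux^{-1}x}$, the renewal expansion of $N(x)-\mux^{-1}x$, and the exact identity $\Cov\bigpar{U_n,S_n(h)}=\binom nd\sum_i\Cov\bigsqpar{f_i(X),h(X)}$ are all correct, and the three contributions match the three terms of \eqref{cvtau2} after the rescaling $n\approx\mux^{-1}x$.

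The gap is in your argument for the degeneracy criterion. For $d\ge2$ the Gaussian limit is \emph{not} the limit of $x^{-1/2}\sum_{k\le\mux^{-1}x}g(X_k)$ for a single fixed function $g$: the first-order Hoeffding projection of an asymmetric \Ustat{} attaches the weight $\binom{k-1}{i-1}\binom{n-k}{d-i}$ to $f_i(X_k)-\mu$, so the weight depends on the position $k$ — this is exactly why $\ggss$ in \eqref{Ugss} is a nontrivial quadratic form in the $\ggs_{ij}$ rather than the variance of one fixed linear combination of the $f_i$. If the limit really were an \iid{} sum with fixed $g=\sum_ia_i(f_i-\mu)-a(h-\mux)$, then $\gamxx=0$ would only force that single linear combination to vanish \as, which is strictly weaker than the asserted conclusion that $f_i(X)=\frac{\mu}{\mux}h(X)$ for \emph{every} $i$; and the step from ``variance zero'' to ``each $f_i$ is the same affine function of $h$'' does not follow even from that premise. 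The correct argument keeps the position dependence: with $p_i(t):=t^{i-1}(1-t)^{d-i}/((i-1)!\,(d-i)!)$ and $\sum_ip_i(t)=1/(d-1)!$, the combined projection gives
\begin{align*}
\gamxx=\mux^{1-2d}\int_0^1\Var\Bigsqpar{\sum_{i=1}^dp_i(t)\Bigpar{f_i(X)-\tfrac{\mu}{\mux}h(X)}}\dd t,
\end{align*}
so $\gamxx=0$ forces the bracket to vanish \as{} for a.e.\ $t$, and the linear independence of $p_1,\dots,p_d$ then forces each $f_i(X)-\frac{\mu}{\mux}h(X)$ to vanish separately. This is the extra ingredient your sketch is missing.
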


\begin{prop}[\cite{SJ332}] \label{PUN2}
Suppose in addition to the hypotheses in \refP{PUN} that $h(X)$ is
integer-valued.
Then \eqref{cvtau} holds also conditioned on $S_{N(x)}=x$ 
(\cf{} \refR{RNN})
for integers $x\to\infty$.
\qed
\end{prop}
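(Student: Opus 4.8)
This is a special case of \cite[Theorem 3.18]{SJ332}, so a citation suffices; for completeness we sketch how one would establish such a conditioned limit theorem directly from the unconditioned \refP{PUN}. Write $W_x$ for the \lhs{} of \eqref{cvtau}, so that \refP{PUN} asserts $W_x\dto\N(0,\gamx^2)$. The plan is to show that the conditioning event $E_x:=\set{S_{N(x)}=x}$ is \emph{local}, hence asymptotically independent of $W_x$, and then to invoke the following elementary principle: if $W_x\dto Z$, if $W_x=\zeta_x+\op(1)$ for some $\zeta_x$ measurable with respect to a $\gs$-field $\cG_x$, and if $\P(E_x\mid\cG_x)\pto c$ for a constant $c>0$, then $W_x$ conditioned on $E_x$ still converges to $Z$. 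To see this, fix a bounded Lipschitz $\psi$; then $\E[\psi(W_x)\indic{E_x}]$ equals, successively up to $o(1)$: $\E[\psi(\zeta_x)\indic{E_x}]$ (as $\psi$ is bounded Lipschitz and $W_x-\zeta_x\pto0$); $\E\bigsqpar{\psi(\zeta_x)\,\P(E_x\mid\cG_x)}$ (as $\psi(\zeta_x)$ is $\cG_x$-measurable); $c\,\E\psi(\zeta_x)$ (bounded convergence); and $c\,\E\psi(Z)$ (as $\zeta_x=W_x+\op(1)\dto Z$). Since also $\P(E_x)=\E\,\P(E_x\mid\cG_x)\to c$, dividing gives the conclusion.

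It remains to construct $\cG_x$ and verify the two hypotheses. Fix a cut-off $y_x$ with $y_x\to\infty$ and $y_x=o(x)$, say $y_x:=\floor{\sqrt x}$; let $\rho_x:=N(x-y_x)$ (a stopping time for $(X_i)$) and put $\cG_x:=\gs(X_1,\dots,X_{\rho_x})$. By the central limit theorem for the renewal counting process, $N(x)-\rho_x=y_x/\mux+\Op\bigpar{\sqrt{y_x}}$. On the other hand, the Hoeffding/ANOVA decomposition underlying the proofs of \refP{PU} and \refP{PUN} writes $W_x$, up to an $\op(1)$ error, as $a\bigpar{N(x)-x/\mux}/\sqrt x+x^{1/2-d}\sum_{j\le N(x)}w_{N(x),j}(X_j)$ — the renewal-count fluctuation plus the first-order projection of $U_{N(x)}$ — for a constant $a$ and mean-zero weights $w_{n,j}$ of order $n^{d-1}$. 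Replacing the index $N(x)$ by $\rho_x$ throughout (and recentring the first summand at $(x-y_x)/\mux$) changes this by only $\Op\bigpar{\sqrt{y_x}/\sqrt x}=\op(1)$: the $y_x/\mux+\Op(\sqrt{y_x})$ summands with $\rho_x<j\le N(x)$ form a mean-zero sum of order $\sqrt{y_x}\,x^{d-1}$, and the change of weights $w_{N(x),j}\to w_{\rho_x,j}$ is of smaller order still. Hence $W_x=\zeta_x+\op(1)$ with $\zeta_x$ the resulting $\cG_x$-measurable random variable; since $\zeta_x=W_x+\op(1)\dto\N(0,\gamx^2)$ by \refP{PUN}, this gives the first hypothesis with $Z\sim\N(0,\gamx^2)$.

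For the second hypothesis, condition on $\cG_x$: then $S_{\rho_x}=x-y_x+R$, where $R\ge0$ is the overshoot of $(S_n)$ above level $x-y_x$. Since $\mux=\E h(X)<\infty$, the overshoot is tight, $R=\Op(1)$, so $\set{R\ge y_x}$ has probability $o(1)$. As $\rho_x$ is a stopping time, the increments $h(X_{\rho_x+1}),h(X_{\rho_x+2}),\dots$ are \iid{} and independent of $\cG_x$, and on $\set{R<y_x}$ the event $S_{N(x)}=x$ occurs exactly when this fresh walk has a renewal at level $y_x-R$; hence there $\P(E_x\mid\cG_x)=u'_{y_x-R}$, where $u'_k:=\P(k\text{ is a renewal epoch})$. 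Since $y_x-R\to\infty$ in probability, the renewal theorem gives $\P(E_x\mid\cG_x)\pto1/\mux$; in the lattice-periodic case one restricts $x$ to the arithmetic progression on which $\P(E_x)>0$ (\cf{} \refR{RNN}), and the limiting constant becomes $g/\mux$ with $g$ the span (here $g=1$). This is the second hypothesis with $c=1/\mux$, and the principle above now yields \eqref{cvtau} conditioned on $E_x$. Convergence of all moments transfers in the same way: whenever the unconditional statement gives $\E|W_x|^p=O(1)$, one has $\E\bigsqpar{|W_x|^p\mid E_x}\le\E|W_x|^p/\P(E_x)=O(1)$ since $\P(E_x)\to1/\mux>0$, so the conditioned family $(W_x)$ is \ui{} of the relevant orders, which upgrades the distributional convergence to moment convergence.

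The step we expect to be the main obstacle is the Hoeffding reduction in the second paragraph: one must re-enter the proof of \refP{PUN} to pin down precisely which $\Theta(\sqrt x)$-scale features of the trajectory carry the fluctuations of $W_x$, and to check that appending a fresh block of $\Op(y_x)=o(\sqrt x)$ extra increments perturbs the \emph{centred} statistic $U_{N(x)}-\binom{N(x)}{d}\mu$ by only $\op(x^{d-1/2})$ — all while tracking the random upper summation index $N(x)$ and the correlation between the renewal-count fluctuation and the first-order projection (which is exactly what produces the mixed term in \eqref{cvtau2}).
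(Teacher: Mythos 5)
Your proof takes the same route as the paper's: \refP{PUN2} is stated there purely as a quotation, "a special case of \cite[Theorem 3.18]{SJ332}", with no further argument, and your opening sentence is exactly that citation. The supplementary sketch (a local conditioning event asymptotically independent of the Hoeffding projection of $U_{N(x)}$, plus the discrete renewal theorem giving $\P(S_{N(x)}=x)\to 1/\nu$) is a reasonable outline of the kind of argument carried out in \cite{SJ332}, but it is not needed here, and you correctly flag that making the $\cG_x$-measurable approximation rigorous would require re-entering the proof of \refP{PUN} rather than treating it as a black box.
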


\begin{prop}[\cite{SJ332}] \label{PUNmom}
Suppose in addition to the hypotheses in \refP{PUN} or \ref{PUN2} 
that 
$\E|f(X_1,\dots,X_d)|^p<\infty$ and  $\E|h(X)|^p<\infty$ for every $p<\infty$.
Then the conclusion \eqref{cvtau} holds 
with convergence of all moments.
\qed
\end{prop}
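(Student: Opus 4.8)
The plan is to reduce \refPs{PUNmom} to the corresponding moment-convergence statements already established in \cite{SJ332}. Since \refPs{PUN} and \ref{PUN2} were themselves quoted from \cite[Theorems 3.11, 3.13(iii) and 3.18]{SJ332}, the moment version should be exactly \cite[Theorem 3.18]{SJ332} (or its conditioned counterpart) under the stronger hypothesis that all absolute moments of $f(X_1,\dots,X_d)$ and of $h(X)$ are finite. So the ``proof'' is essentially a citation, together with a check that the notation matches; the real content is to confirm that the hypotheses as we have stated them imply the hypotheses used in that reference.

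First I would recall the structure already set up: we have an \iid{} sequence $(X_i)$, a function $f$ of $d$ variables, and a nonnegative function $h$ with $\nu=\E h(X)>0$; the quantities $U_n$, $S_n$, $N(x)$ are as in \eqref{U}, \eqref{Snh}, \eqref{Nx}. Under $\E|f|^2<\infty$ and $\E h(X)^2<\infty$ we already have the CLT \eqref{cvtau}, both unconditionally (\refP{PUN}) and conditioned on $S_{N(x)}=x$ when $h$ is integer-valued (\refP{PUN2}). The additional hypothesis in \refP{PUNmom} is that $\E|f(X_1,\dots,X_d)|^p<\infty$ and $\E|h(X)|^p<\infty$ for \emph{every} $p<\infty$. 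I would then simply invoke the moment-convergence part of the cited theorems in \cite{SJ332}: these give, under a finite $p$th moment assumption, convergence of all moments of order $\le p$ in \eqref{cvtau}; letting $p\to\infty$ yields convergence of all moments (ordinary, absolute, and centered, per the convention fixed in \refSS{SSfurther}). For the conditioned case the same reference handles $S_{N(x)}=x$; one only needs $h$ integer-valued, which is part of the hypotheses inherited from \refP{PUN2}.

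The one point deserving a sentence of care is the translation of moment hypotheses: \cite{SJ332} typically phrases its moment results in terms of $L^p$ bounds on $f$ and on the increments $h(X_i)$ (equivalently on $S_1$), possibly also requiring a moment bound on something like $N(x)/x$; but since $h\ge0$ with $\E h<\infty$, renewal theory gives $N(x)=\OLX(x)$ once $\E h(X)^p<\infty$ for all $p$ (this is standard, e.g.\ via Wald-type identities and the fact that $S_{N(x)-1}<x$), so no extra assumption is needed. Hence the hypotheses of \refP{PUNmom} are at least as strong as those required in \cite{SJ332}, and the conclusion transfers verbatim.

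The main (and only real) obstacle is bookkeeping: making sure the normalization, the centering constant $\mu\nu^{-d}(d!)^{-1}x^d$, and the definition of $N(x)$ in \eqref{Nx} agree with the conventions of \cite{SJ332} up to the harmless discrepancies already flagged in \refR{RNN} and \cite[Remark 3.19]{SJ360}. Once that is checked, there is nothing further to prove; I would write the proof as: ``This follows from the moment-convergence statements in \cite[Theorems 3.13(iii) and 3.18]{SJ332} (and their conditioned versions), exactly as \refPs{PUN} and \ref{PUN2} followed from the distributional statements there; the hypothesis that all moments of $f$ and $h$ are finite, together with $h\ge0$, ensures $N(x)=\OLX(x)$ and hence all the moment bounds required there. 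Letting $p\to\infty$ gives convergence of all moments.''
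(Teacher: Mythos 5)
Your proposal is correct and matches the paper exactly: the paper gives no proof at all (the \qed in the statement), having already declared that \refPs{PUN}--\ref{PUNmom} are special cases of \cite[Theorems 3.11, 3.13(iii) and 3.18]{SJ332}, so the intended argument is precisely the citation-plus-notation-check you describe. Your extra remarks on translating the moment hypotheses and on \refR{RNN} are sensible sanity checks but add nothing beyond what the paper already delegates to \cite{SJ332}.
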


\begin{remark}  \label{R1}
In the special case $d=1$, when the \Ustat{} \eqref{U} is a standard single sum,
\eqref{Umu}--\eqref{Ugss} and \eqref{cvtau2} simplify to $f_1=f$,
$\gss=\gs_{11}=\Var f(X)$, and
\begin{align}\label{rov}
  \gamm&
=\frac{1}{\nu}\gss-2\frac{\mu}{\nu^2}\Cov\bigsqpar{f(X),h(X)}
+ \frac{\mu^2}{\nu^3}\Var h(X)
\notag\\&
=\frac{1}{\nu} \Var \Bigsqpar{f(X)-\frac{\mu}{\nu}h(X)}.
\end{align}
This special case is classical, see \eg{}
\cite[Theorem 4.2.3]{Gut-SRW}.
\end{remark}

\begin{remark}\label{Rmulti}
The results in \refPs{PU}--\ref{PUNmom} hold jointly for several $f$
(possibly with different $d$). This is not stated explicitly in \cite{SJ332}
(except for \eqref{c1}), but it follows by the  same proofs as
in \cite{SJ332} (perhaps, for convenience, using 
the Skorohod coupling theorem \cite[Theorem~4.30]{Kallenberg} and a.s.\
convergence in the proofs). See also \cite{SJ360}.
\end{remark}

\subsection{Constrained \Ustat{s}}
In this subsection we extend some of the results above to constrained
\Ustat{s}, defined as follows. We consider here only a case relevant for the
application in the present paper; for more general definitions and results,
see \cite{SJ360} (with somewhat different notation).

Let, as above, $(X_i)\xoo$ be an \iid{} sequence of random variables in some
measurable space $\cS$.

Let $d\ge1$ and let $\mb_1,\dots,\mb_d$ be given non-negative integers.
(These are regarded as fixed in this subsection.)
Let 
\begin{align}
\mbb_j&:=\mb_j-1,\label{mbb}
\\
D_j&:=\sum_{1}^j \mb_i, \qquad 0\le j\le d,
\\
\MD_j&:=\sum_{1}^j \mbb_i=D_j-j, \qquad 0\le j\le d,\label{MDi}
\\
D&:=D_d=\sum_{i=1}^d\mb_i=\MD_d+d.  
\end{align}
Suppose that $f:\cS^D\to\bbR$ is a measurable function, and
define the \emph{constrained \Ustat}
\begin{align}\label{UU1}
  \UU_n=\UU_n(f)
:=\sum_{i_1,\dots,i_d} 
f\bigpar{(X_{i_1+k})_{k=0}^{\mbb_1},(X_{i_2+k})_{k=0}^{\mbb_2},\dots,
  (X_{i_d+k})_{k=0}^{\mbb_d}}
\end{align}
summing over all $i_1,\dots,i_d$ such that $i_1\ge1$, 
$i_1+\mbb_1<i_2$, $i_2+\mbb_2<i_3$, \dots, $i_{d-1}+\mbb_{d-1}<i_d$,
and $i_d+\mbb_d\le n$.
(We have grouped the arguments of $f$ in \eqref{UU1}, 
using an obvious notation.)
In other words, $\UU_n$ is defined as $U_n$ in \eqref{U},
with $d$ replaced by $D$,
but only summing over $i_1,\dots,i_D$ such that the $\mb_1$ first indices
are consecutive, as well as the next $\mb_2$, and so on.
In particular, in the special case $\mb_1=\dots=\mb_d=1$,
$\UU_n$ equals the unconstrained \Ustat{} $U_n$ in \eqref{U}.

By replacing $i_j$ by $i_j-\MD_{j-1}$ in \eqref{UU1}, we obtain the
alternative formula
\begin{multline}\label{UU2}
  \UU_n
:=\sum_{1\le i_1<i_2<\dots<i_d\le n-\MD_d} 
f\bigpar{(X_{i_1+k})_{k=0}^{\MD_1}, (X_{i_2+k})_{k=\MD_1}^{\MD_2},
\dots,(X_{i_d+k})_{k=\MD_{d-1}}^{\MD_d}}
.\end{multline}

Define, as in \eqref{Umu},
\begin{align}
    \mu=\mu_f&:=\E f(X_1,\dots,X_D). \label{UUmu}
\end{align}
By \eqref{UU2}, the mean of $\UU_n$ is
\begin{align}\label{EUU}
  \E\UU_n = \binom{n-\MD_d}{d}\mu.
\end{align}

\refP{PU} extends to constrained \Ustat{s} as follows.

\begin{prop}[\cite{SJ360}]\label{PUU1}
  Let $\UU_n=\UU_n(f)$ be a constrained \Ustat{} defined as above, 
with $(X_i)\xoo$ i.i.d.,
and assume
  $\E|f(X_1,\dots,X_D)|^2<\infty$.
Then, with $\mu=\mu_f$ given by \eqref{UUmu} and some $\gss=\gss_f\ge0$,
\begin{align}\label{puu1}
  \frac{\UU_n-\binom{n}{d}\mu}{n^{d-1/2}}
\dto \N\bigpar{0,\gss}
.\end{align}

Moreover, if $\E|f(X_1,\dots,X_d)|^p<\infty$ for some $p\ge2$, 
the \eqref{puu1} holds with convergence of all moments of order $\le p$.
\end{prop}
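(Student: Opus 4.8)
The plan is to prove \refP{PUU1} by applying Hoeffding's (ANOVA) decomposition to the kernel $f$ with respect to the underlying \iid{} sequence $(X_i)$: this isolates a leading term that is a sum of mildly dependent contributions, while the remainder is negligible. First I would reduce to the centred case. By \eqref{EUU}, $\E\UU_n=\binom{n-\MD_d}{d}\mu=\binom nd\mu+O(n^{d-1})$, so it suffices to show $(\UU_n-\E\UU_n)/n^{d-1/2}\dto\N(0,\gss)$, and, replacing $f$ by $f-\mu$, we may assume $\mu=0$. Since the $X_i$ are \iid, expand $f=\sum_{A\subseteq[D]}f_A$ in its orthogonal (Hoeffding) decomposition, where $f_A$ depends only on the coordinates in $A$, is centred in each of its arguments, and $f_\emptyset=\mu=0$. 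By linearity of $f\mapsto\UU_n(f)$, $\UU_n=\sum_A\UU_n(f_A)$.

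The $D$ coordinates of $f$ split into the $d$ consecutive groups, of sizes $\mb_1,\dots,\mb_d$, that form the argument blocks in \eqref{UU2}. For $A\neq\emptyset$ let $s(A)\ge1$ be the number of these groups met by $A$. Then $\UU_n(f_A)$ depends only on the corresponding $s(A)$ of the indices $i_1,\dots,i_d$, the remaining ones merely contributing a combinatorial factor of order $n^{d-s(A)}$; and since $f_A$ is centred in every coordinate, any two summands of $\UU_n(f_A)$ have vanishing covariance unless they involve identical sets of underlying variables $X_i$, exactly as for ordinary \Ustat{s} with completely degenerate kernels. This gives $\Var\UU_n(f_A)=O\bigpar{n^{2(d-s(A))}\cdot n^{s(A)}}=O\bigpar{n^{2d-s(A)}}$, so $\UU_n=L_n+R_n$ where $L_n:=\sum_{A:\,s(A)=1}\UU_n(f_A)$ and $R_n:=\sum_{A:\,s(A)\ge2}\UU_n(f_A)=\OL2(n^{d-1})=\op\bigpar{n^{d-1/2}}$. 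Grouping the sets $A$ with $s(A)=1$ by the group $j\in[d]$ they meet and using $\sum_{\emptyset\neq A\subseteq\text{group }j}f_A=\E\bigsqpar{f(X_1,\dots,X_D)\mid X_{D_{j-1}+1},\dots,X_{D_j}}=:G_j$ (a mean-zero function of $\mb_j$ variables, since $\mu=0$), together with the number of admissible choices of the remaining indices read off from \eqref{UU2}, one obtains
\begin{align*}
L_n=\sum_{j=1}^d\sum_{\ell\ge1}c_{n,j}(\ell)\,G_j\bigpar{(X_{\ell+\MD_{j-1}+k})_{k=0}^{\mbb_j}},
\qquad c_{n,j}(\ell):=\binom{\ell-1}{j-1}\binom{n-\MD_d-\ell}{d-j}.
\end{align*}

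Now for the central limit theorem. Uniformly for $\ell$ in the bulk, $c_{n,j}(\ell)=n^{d-1}\beta_j(\ell/n)\bigpar{1+o(1)}$ with $\beta_j(t):=t^{j-1}(1-t)^{d-j}/\bigpar{(j-1)!\,(d-j)!}$; hence $L_n=\sum_\ell\xi_{n,\ell}$, where $\xi_{n,\ell}:=\sum_{j=1}^d c_{n,j}(\ell)G_j\bigpar{(X_{\ell+\MD_{j-1}+k})_{k=0}^{\mbb_j}}$ form a mean-zero, $\MD_d$-dependent triangular array of summands of order $\OL2(n^{d-1})$ with a slowly varying envelope, and with $\sim n$ nonzero terms. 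A standard central limit theorem for $m$-dependent summands — conveniently realized here via the martingale $\bigpar{\E[L_n\mid X_1,\dots,X_k]}_k$, whose increments depend on boundedly many of the $X_i$ and whose normalized quadratic variation converges — then yields $L_n/n^{d-1/2}\dto\N(0,\gss)$ with $\gss:=\lim_n n^{1-2d}\Var L_n\ge0$, a limit of variances (and in fact a finite sum of integrated covariances of the $G_j$ and their shifts, which we do not need to write out). Together with $R_n=\op(n^{d-1/2})$ this is \eqref{puu1}. For the moment statement, under $\E|f|^p<\infty$ one upgrades the above estimates to $R_n=\OL{p}(n^{d-1})$ and $L_n=\OL{p}(n^{d-1/2})$, using the $L^p$ version of the covariance bound and a Rosenthal/Burkholder inequality for the martingale, so that $(\UU_n-\E\UU_n)/n^{d-1/2}$ is bounded in $L^p$; convergence in distribution plus uniform integrability of the $p$-th powers then give convergence of all moments of order $\le p$.

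The step I expect to be the real obstacle is the correct identification of the leading term $L_n$: one must see that it is not the first-order Hoeffding term alone, but the sum of the full projections $G_j$ of $f$ onto the individual coordinate groups (equivalently, of all $f_A$ with $A$ contained in one group), and — since these projections are read off overlapping windows of the \iid{} sequence, so that $\UU_n$ is in fact an ordinary \Ustat{} of the $\MD_d$-dependent sequence $\bigpar{(X_i,X_{i+1},\dots,X_{i+\MD_d})}_i$ — that \refP{PU} does not apply directly and must be replaced by an $m$-dependent central limit theorem. Once this is in place, the surrounding variance-order bookkeeping and the moment estimates are routine.
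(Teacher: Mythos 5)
Your proposal is essentially correct, but it is worth noting that the paper does not prove \refP{PUU1} at all: its ``proof'' is a one-line citation of \cite[Theorems 3.9 and 3.15]{SJ360}. What you have written is a reconstruction of the argument that the cited work itself uses, namely a Hoeffding/ANOVA decomposition of the kernel over the $d$ consecutive coordinate groups, the observation that the projections $G_j$ onto single groups produce a weighted sum of an $\MD_d$-dependent stationary array with slowly varying polynomial weights $c_{n,j}(\ell)\sim n^{d-1}\beta_j(\ell/n)$ (so that the classical i.i.d.\ projection CLT of \refP{PU} must be replaced by an \mdep{} or martingale CLT), and a variance bound $\Var\UU_n(f_A)=O(n^{2d-s(A)})$ for the degenerate pieces via the ``identical variable sets'' orthogonality. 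Your identification of the leading term as the full group projections $G_j=f_j-\mu$ rather than first-order Hoeffding terms is exactly the right point, and the combinatorial coefficient $\binom{\ell-1}{j-1}\binom{n-\MD_d-\ell}{d-j}$ is correct for \eqref{UU2}.

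Two soft spots, neither fatal. First, for the remainder $R_n$ the moment claim $R_n=\OL{p}(n^{d-1})$ does not follow from ``the $L^p$ version of the covariance bound''; one needs a genuine Rosenthal/Burkholder-type moment inequality for degenerate (here additionally \mdep{}) \Ustat{s}, obtained e.g.\ by iterating the martingale-difference argument over the $s(A)$ active groups. This is standard but is a separate estimate, not an interpolation of the $L^2$ computation. Second, boundedness of $(\UU_n-\binom nd\mu)/n^{d-1/2}$ in $L^p$ together with convergence in distribution yields uniform integrability of the $q$-th powers, and hence moment convergence, only for $q<p$; to reach the endpoint $q=p$ asserted in the statement one needs either boundedness in some $L^{p'}$ with $p'>p$ or a direct uniform-integrability argument (this is handled carefully in \cite[Theorem 3.15]{SJ332} and \cite{SJ360}). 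In the application in this paper all moments of $f$ are finite, so the endpoint issue is harmless there, but as a proof of the stated proposition it should be addressed.
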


It does not matter whether we subtract $\E\UU_n$ 
or $\binom nd \mu$ in
\eqref{puu1}, since the difference is
$O\bigpar{n^{d-1}}=o\bigpar{n^{d-1/2}}$
by \eqref{EUU}.

\begin{proof}
This is a special case of \cite[Theorems 3.9 and 3.15]{SJ360}. 
\end{proof}

The variance $\gss$ in \eqref{puu1} can be calculated explicitly, see
\cite[Remark 6.2]{SJ360}, but the formulas are a bit complicated, and we 
omit them. Instead, we give a  criterion that often can be used in applications
to show that $\gss>0$.
We define, in analogy with \eqref{fi},
\begin{align}\label{fj}
  f_j\bigpar{x_1,\dots,x_{\mb_j}}
:=
\E\bigsqpar{ f\bigpar{X_1,\dots,X_D} 
\mid (X_{D_{j-1}+1},\dots,X_{D_{j}}) = (x_1,\dots,x_{\mb_j})}
.\end{align}
We extend the definition \eqref{Snh} to functions $g:\cS^b\to\bbR$ for any
$b\ge1$ by defining, for such $g$,
\begin{align}\label{emma}
S_n(g)&:=\sum_{i=1}^{n} g\bigpar{X_{i},\dots,X_{i+b-1}}
.\end{align}

\begin{prop}\label{PUU0}
  In \refP{PUU1}, the asymptotic variance $\gss_f=0$ if and only if
for every $j\in[d]$, there exists a function $\psi_j:\cS^{\mb_j-1}\to\bbR$ such
that
\as{}
\begin{align}\label{fpsi}
  f_j\bigpar{X_1,\dots,X_{b_j}}-\mu
=\psi_j\bigpar{X_2,\dots,X_{b_j}}
-\psi_j \bigpar{X_1,\dots,X_{b_j-1}},
\end{align}
and thus \as, for every $n\ge 1$,
\begin{align}\label{williams}
S_n(f_j-\mu)
=\psi_j\bigpar{X_{n+1},\dots,X_{n+b_j-1}}
-\psi_j \bigpar{X_1,\dots,X_{b_j-1}}.
\end{align}
Consequently, if $\gss_f=0$, then
 $S_n(f_j)$ is independent of $X_{b_j},\dots,X_n$ for every $j\in[d]$ and
 $n\ge b_j$.
\end{prop}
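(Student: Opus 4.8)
The statement to prove is \refP{PUU0}: the asymptotic variance $\gss_f$ in \refP{PUU1} vanishes if and only if each conditional‐expectation function $f_j$, centred at $\mu$, is a "coboundary" in the sense of \eqref{fpsi}, with the telescoping consequences \eqref{williams} and the independence statement. The natural route is to go back to the proof of \refP{PUU1} in \cite{SJ360}, which identifies $\gss_f$ as the variance of an explicit Gaussian limit built from the functions $S_n(f_j-\mu)$; the point is that $\gss_f=0$ exactly when those partial sums do not grow, i.e.\ are $O_{L^2}(1)$, and for sums of a stationary $\mb_j$-dependent sequence this forces the coboundary representation. So the plan is: (i) recall the variance formula / limit representation from \cite[Remark 6.2 and Theorems 3.9, 3.15]{SJ360}; (ii) reduce "$\gss_f=0$" to "$S_n(f_j-\mu)=\OLX(1)$, in fact bounded in $L^2$, for each $j$"; (iii) prove that a stationary $\mb_j$-dependent sequence of mean‑zero summands whose partial sums are bounded in $L^2$ must be of the form $g(X_{n+1},\dots)-g(X_n,\dots)$; (iv) read off \eqref{williams} by telescoping and \eqref{fpsi} as the case $n=1$; (v) deduce the independence statement, since the right‑hand side of \eqref{williams} depends only on $X_1,\dots,X_{b_j-1}$ and $X_{n+1},\dots,X_{n+b_j-1}$, hence $S_n(f_j)$ (which is $S_n(f_j-\mu)+n\mu$) is independent of $X_{b_j},\dots,X_n$.

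**Key steps in order.** First I would set $g_j:=f_j-\mu$, a function of $\mb_j$ consecutive coordinates with $\E g_j(X_1,\dots,X_{\mb_j})=0$, so that $Y^{(j)}_i:=g_j(X_i,\dots,X_{i+\mb_j-1})$ is a strictly stationary, $(\mb_j-1)$‑dependent, mean‑zero sequence and $S_n(f_j-\mu)=\sum_{i=1}^n Y^{(j)}_i$. Second, invoke the structure of the limit in \refP{PUU1}: the proof in \cite{SJ360} writes the limiting normal variable as a linear combination of independent Brownian motions whose diffusion coefficients are governed by the long‑run variances $\sum_{i,i'}\Cov(Y^{(j)}_1,Y^{(j')}_{i'})$‑type quantities; $\gss_f=0$ iff all relevant such long‑run variances vanish, which (for each fixed $j$, by an $m$‑dependent CLT/renewal argument as in \cite{SJ332,SJ360}) is equivalent to $\Var\big(\sum_{i=1}^n Y^{(j)}_i\big)=O(1)$. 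Third — this is the combinatorial/probabilistic heart — I would show: if $(Y_i)_{i\ge1}$ is stationary, $r$‑dependent, mean zero, and $\sup_n\Var(\sum_{i=1}^n Y_i)<\infty$, then there is a measurable $\psi$ of $r$ coordinates with $Y_i=\psi(X_{i+1},\dots,X_{i+r})-\psi(X_i,\dots,X_{i+r-1})$ a.s. The standard argument: let $M_n:=\sum_{i=1}^n Y_i$; bounded in $L^2$ and, because of $r$‑dependence, the tail $\sigma$‑fields are trivial enough that $M_n$ converges in $L^2$ along the filtration generated by blocks — more concretely, define $\psi(X_1,\dots,X_r):=-\sum_{k\ge0}\E[Y_{1}\mid X_1,\dots,X_{r+k}]$‑style telescoping, or better, use the classical fact (Leonov; see also the coboundary lemma for $m$‑dependent sequences) that an $r$‑dependent stationary sequence with bounded partial sums is an $r$‑dependent coboundary, with $\psi=-\sum_{j\ge 1}\E[Y_j\mid \mathcal F_1]$ where $\mathcal F_1=\sigma(X_1,\dots,X_r)$, the sum being finite because $\E[Y_j\mid\mathcal F_1]=0$ for $j>r$ by independence. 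Fourth, telescope: $\sum_{i=1}^n Y^{(j)}_i=\psi_j(X_{n+1},\dots,X_{n+b_j-1})-\psi_j(X_1,\dots,X_{b_j-1})$, which is exactly \eqref{williams}; taking $n=1$ and using $S_1(f_j-\mu)=f_j(X_1,\dots,X_{b_j})-\mu$ gives \eqref{fpsi}. Fifth, conversely, if \eqref{fpsi} holds for each $j$ then each $S_n(f_j-\mu)$ is bounded, hence all the long‑run variances in the limit representation vanish and $\gss_f=0$ — closing the iff. Finally, the "consequently" clause: from \eqref{williams}, $S_n(f_j)=n\mu+\psi_j(X_{n+1},\dots,X_{n+b_j-1})-\psi_j(X_1,\dots,X_{b_j-1})$ is a function of $X_1,\dots,X_{b_j-1}$ and $X_{n+1},\dots,X_{n+b_j-1}$ only, hence independent of $X_{b_j},\dots,X_n$ (using independence of the $X_i$).

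**Main obstacle.** The delicate point is step (iii), the coboundary lemma for $r$‑dependent sequences — showing that bounded partial sums (not just $o(\sqrt n)$ variance but genuinely $O(1)$) force the exact telescoping representation, and that the "$\gss_f=0$" hypothesis really delivers that boundedness rather than merely $\Var(S_n(f_j-\mu))=o(n)$. Here one must be careful: a priori $\gss_f=0$ only says the $n^{d-1/2}$‑scaled limit is degenerate, so one has to extract from the precise variance/limit formulas of \cite[Remark 6.2]{SJ360} that this is equivalent to $\Var S_n(f_j-\mu)$ being bounded for every $j$ (not just the weaker $O(n)$). This is where the structure of the constrained $U$‑statistic matters: the cross‑terms between different $j$ can be shown to be harmless, and the diagonal contribution of block $j$ is, up to constants, the long‑run variance of $(Y^{(j)}_i)$, which for an $r$‑dependent stationary sequence is zero iff the sequence is a coboundary iff its partial sums are $L^2$‑bounded. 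I expect that modulo quoting \cite{SJ332,SJ360} for the reduction, the remaining work is the by‑now‑standard $m$‑dependent coboundary argument, and the verification that $\psi_j=-\sum_{i\ge2}\E[f_j(X_i,\dots,X_{i+b_j-1})-\mu\mid X_1,\dots,X_{b_j-1}]$ is a well‑defined function of $b_j-1$ coordinates (the sum terminating because of $(b_j-1)$‑dependence) and satisfies \eqref{fpsi}.
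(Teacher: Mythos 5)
Your proposal is correct in outline, but it takes the route that the paper only mentions parenthetically as an alternative, rather than the paper's primary one. The paper's proof simply cites \cite[Theorem 8.4]{SJ360}, which already \emph{is} the coboundary criterion: it gives $\gss_f=0$ iff $g_j(X_1,\dots,X_{\MD_d+1})=\gf_j(X_2,\dots,X_{\MD_d+1})-\gf_j(X_1,\dots,X_{\MD_d})$ for some $\gf_j$, where $g_j$ is $f_j-\mu$ padded with redundant variables; the only work the paper then does is to strip the redundant variables from $\gf_j$ (fixing coordinates one at a time from both ends) to reduce this to \eqref{fpsi}. You instead re-derive that criterion: you reduce $\gss_f=0$ to $\Var S_n(f_j-\mu)=O(1)$ for each $j$ via the variance formula of \cite[Remark 6.2]{SJ360}, and then invoke the coboundary lemma for stationary $(\mb_j-1)$-dependent sequences with $L^2$-bounded partial sums, which is \cite[Theorem 2]{SJ286} --- exactly the alternative the paper sketches in a parenthesis, with the caveat that ``this essentially repeats part of the argument in \cite{SJ360}''. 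So the step you flag as the main obstacle (that degeneracy of the quadratic form forces genuine $O(1)$ boundedness of each block's partial sums, not merely $o(n)$ variance, and that cross-terms between blocks cannot conspire to cancel) is precisely the content of the theorem the paper cites; you have not avoided it, only relocated it. Your route buys a cleaner statement (no redundant-variable elimination, since you work with $f_j$ directly) at the cost of reproving the quoted result. Two small cautions: your explicit candidate $\psi_j=-\sum_{i\ge2}\E\bigsqpar{f_j(X_i,\dots,X_{i+b_j-1})-\mu\mid X_1,\dots,X_{b_j-1}}$ cannot be right as written (for $\mb_j=2$ it vanishes identically, which would force $f_j\equiv\mu$), so you must genuinely lean on the cited coboundary lemma rather than this formula; and note that \eqref{fpsi} is the $n=1$ case of \eqref{williams} only after checking that your lemma produces a $\psi_j$ of exactly $\mb_j-1$ coordinates, which is where the paper's redundant-variable argument (or the $m$-dependence in \cite{SJ286}) does real work.
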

\begin{proof}
  This is essentially a special case of \cite[Theorem 8.4]{SJ360};
the difference is mainly notational.
The function $g_j$ in \cite[Theorem 8.4 and Remark 6.2]{SJ360} is, in our
case, given by 
\begin{align}\label{gjfj}
  g_j\bigpar{x_1,\dots,x_{\MDd+1}}
= f_j\bigpar{x_{\MD_{j-1}+1}, \dots,x_{\MD_{j-1}+b_j}}-\mu;
\end{align}
thus $g_j$ is essentially the same as $f_j-\mu$ but contains some redundant
variables. 
\cite[Theorem 8.4]{SJ360} says that $\gss_f=0$ if and only if there exists a
function $\gf_j:\cS^{\MD_d}\to\bbR$ such that a.s.
\begin{align}\label{gphi}
  g_j\bigpar{X_1,\dots,X_{\MD_{d}+1}}
=\gf_j\bigpar{X_2,\dots,X_{\MD_{d}+1}}
-\gf_j \bigpar{X_1,\dots,X_{\MD_{d}}}.
\end{align}
This is \eqref{fpsi}, except that we have  redundant variables.
These may be eliminated one by one. For example, if $\MD_{j-1}>0$, and thus
$g_j$ does not depend on $x_1$ by \eqref{gjfj}, 
then \eqref{gphi} implies that 
for \aex{} fixed $x_1\in \cS$, we have
$\gf_j \bigpar{X_1,\dots,X_{\MDd}} = \gf_j \bigpar{x_1,X_2,\dots,X_{\MDd}}$
a.s., and thus a.s.
\begin{align}
  \gf_j \bigpar{X_1,\dots,X_{\MDd}} = \gf'_j \bigpar{X_2,\dots,X_{\MDd}}
\end{align}
for some function $\gf'_j:\cS^{\MD_d-1}\to\bbR$.
Continuing in this way, from both ends, we see that a.s.
\begin{align}\label{phipsi}
 \gf_j\bigpar{X_1,\dots,X_{\MDd}}
= \psi_j\bigpar{X_{\MD_{j-1}+1}, \dots,X_{\MD_{j-1}+b_j-1}}
\end{align}
for some function $\psi_j$, and thus 
\eqref{gphi} reduces to \eqref{fpsi}.
(Alternatively, one might note that \eqref{gphi} implies 
$\Var\bigsqpar{S_n(f_j-\mu)}=\Var S_n(g_j)=O(1)$, and then 
\cite[Theorem 2]{SJ286} 
yields \eqref{fpsi} -- this essentially repeats part of the argument 
in \cite{SJ360} yielding \eqref{gphi}.)
Conversely, \eqref{fpsi} trivially yields \eqref{gphi} for a suitable $\gf_j$.
\end{proof}

We will use a renewal theory version of constrained \Ustat{s}.
We assume again that $h:\cS\to\bbR$ with $h(X_i)\ge0$ a.s., and use
the notation \eqref{nu}--\eqref{Nx}.
The following results are special cases of 
\cite[Theorems 3.20, 8.7, 3.21, and 3.23]{SJ360}.

\begin{prop}[\cite{SJ360}]\label{PUUN}
  Let $\UU_n=\UU_n(f)$ be a constrained \Ustat{} defined as above, 
with $(X_i)\xoo$ i.i.d.
Suppose that 
$\E|f(X_1,\dots,X_D)|^2<\infty$, 
and that
$h(X)\ge0$ \as, with $\mux:=\E h(X)>0$
and $\E h(X)^2<\infty$.
Then, with notations as above, 
as \xtoo,
\begin{equation}\label{cvtauu}
\frac{\UU_{N(x)}-\mu{\mux}^{-d}{d!}\qw x^{d}}
{x^{d-1/2}} 
\dto \N\bigpar{0,\gamx^2},
\end{equation}
for some $\gamxx\ge0$.
Moreover,
$\gamxx>0$ unless,
for each $j=1,\dots,d$, 
the conditions in \refP{PUU0} hold
with $f-\mu$ replaced by the function
$f_j(X_1,\dots,X_{b_j})-\frac{\mu}{\nu} h(X_1)$.
\end{prop}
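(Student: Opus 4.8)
The plan is to follow exactly the route that produces the renewal version \refP{PUN} from the fixed-index version \refP{PU}, now starting from \refP{PUU1} and its variance analysis \refP{PUU0}; carried out with care this is the content of the cited theorems of \cite{SJ360}, and I would organise it in four steps. The first step is the Hoeffding/Hájek-type decomposition behind \refP{PUU1}: with $f_j$ as in \eqref{fj},
\[
  \UU_n=\binom{n-\MDd}{d}\mu+\sum_{j=1}^{d}\sum_{i}w_{n,j}(i)\,\bigpar{f_j\bigpar{X_i,\dots,X_{i+\mbb_j}}-\mu}+R_n,
\]
where $w_{n,j}(i)$ is an explicit product of two binomial coefficients with $w_{n,j}(i)\approx n^{d-1}t^{j-1}(1-t)^{d-j}/\bigpar{(j-1)!\,(d-j)!}$ for $t=i/n$, and $R_n$, which collects the Hoeffding components of order $\ge2$, satisfies $R_n=\OL2\bigpar{n^{d-1}}$. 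The only algebraic fact needed later is the identity $\sum_{j=1}^{d}t^{j-1}(1-t)^{d-j}/\bigpar{(j-1)!\,(d-j)!}=1/(d-1)!$, which holds for every $t$ by the binomial theorem. Since each inner sum is a deterministically weighted sum of a finitely dependent, centered sequence, the multivariate central limit theorem applies to the $d$ inner sums and to $S_n(h)-n\nu$ jointly (after normalising by $n^{d-1/2}$ and by $\sqrt n$ respectively), giving a Gaussian limit — the joint form of \refP{PUU1} recorded in \refR{Rmulti}.

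Second, the renewal substitution and moments. Since $h(X)\ge0$ and $\nu>0$, the strong law gives $N(x)/x\to1/\nu$ \as, and the renewal central limit theorem gives the refinement $N(x)=x/\nu-\nu\qw S_{N(x)}(h-\nu)+O(1)$. Substituting $n=N(x)$ into the decomposition above, Taylor-expanding $\binom{N(x)-\MDd}{d}\mu$ to first order, and using the weight identity to split the term $S_{N(x)}(h-\nu)$ over the $d$ blocks, one finds — writing $g_j:=f_j-\tfrac{\mu}{\nu}h$ (which has mean $0$ by \eqref{fj}) and $t_i:=i\nu/x$ — that $\UU_{N(x)}-\mu\nu^{-d}(d!)\qw x^{d}$ equals, up to an $\op\bigpar{x^{d-1/2}}$ error from $R_{N(x)}$ and the higher Taylor terms,
\[
  \Bigpar{\tfrac{x}{\nu}}^{d-1}\sum_{i=1}^{N(x)}\sum_{j=1}^{d}\frac{t_i^{\,j-1}(1-t_i)^{\,d-j}}{(j-1)!\,(d-j)!}\;g_j\bigpar{X_i,\dots,X_{i+\mbb_j}}.
\]
An Anscombe-type argument — conveniently via the Skorohod coupling, \cf{} \refR{Rmulti}, using the joint convergence of the first step and the \as{} convergence $N(x)/x\to1/\nu$ — turns this into the central limit theorem \eqref{cvtauu}, with $\gamxx$ the long-run quadratic form attached to the displayed sum. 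Convergence of all moments then follows by combining the $L^p$ bounds for constrained \Ustat{s} from \cite{SJ360} with standard moment bounds for $N(x)$ and for randomly stopped sums (using that $\P\bigpar{N(x)>2x/\nu}$ decays faster than any power of $x$), giving uniform integrability of every power.

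Third, the variance criterion. From the displayed sum, $\gamxx=\nu\qw\int_0^1 a(t)^{\mathsf T}\gG\,a(t)\,\dd t$, where $a(t)$ has entries $t^{j-1}(1-t)^{d-j}/\bigpar{(j-1)!\,(d-j)!}$ and $\gG=(\gG_{jj'})$ is the positive-semidefinite matrix of long-run covariances of the finitely dependent sequences $g_j$ and $g_{j'}$. The Bernstein-type functions $t\mapsto t^{j-1}(1-t)^{d-j}$, $j=1,\dots,d$, are linearly independent on $[0,1]$, so $\{a(t):t\in[0,1]\}$ spans $\bbR^d$; hence $\gamxx=0$ forces $\gG=0$, in particular $\gG_{jj}=0$, \ie{} $\Var S_n(g_j)=o(n)$ for every $j$. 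For a function of a finitely dependent sequence, $\Var S_n$ is linear in $n$ up to a bounded term, so this yields $\Var S_n(g_j)=O(1)$, and then \cite[Theorem 2]{SJ286} shows that $g_j$ is a coboundary — which is exactly the condition of \refP{PUU0} with $f-\mu$ replaced by $f_j(X_1,\dots,X_{b_j})-\tfrac{\mu}{\nu}h(X_1)$. The converse is immediate: those coboundary conditions make each $S_n(g_j)$, hence the whole displayed sum, of bounded variance, so $\gamxx=0$.

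I expect the main obstacle to be the ``only if'' half of the third step. The weight polynomials for different blocks are far from orthogonal in the index $i$ — already for $d=2$ one has $\sum_i(n-i)(i-1)\sim n^3/6$ — so block-by-block degeneracy cannot be read off directly from the fixed-$n$ variance, and the argument has to combine the independence of the $X_i$ with the linear independence of the Bernstein basis; this is precisely where the detailed identities of \cite[Theorems 8.4 and 8.7]{SJ360} do the work. A secondary, and routine, point is the uniform control of $R_{N(x)}$ and of the Taylor remainders under the random time change, which is supplied by the $L^p$ estimates above.
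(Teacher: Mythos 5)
Your argument is correct in outline, but it is not the paper's proof: the paper proves this proposition by \emph{citation} --- the limit \eqref{cvtauu} is taken directly from \cite[Theorem 3.20]{SJ360} and the degeneracy criterion from \cite[Theorem 8.7]{SJ360} --- and the only work actually done in the paper is notational, namely rewriting the coboundary identity of \cite{SJ360} (which is stated for the padded function $g_j$ of $\MD_d+1$ variables, \cf{} \eqref{gjfj}) into the form involving $f_j(X_1,\dots,X_{b_j})-\frac{\mu}{\nu}h(X_1)$, via the substitution \eqref{qv} and the elimination of redundant variables as in \refP{PUU0}. What you have written is instead a reconstruction of the proof of the cited theorems themselves: Hoeffding projection with the Bernstein weights $t^{j-1}(1-t)^{d-j}/((j-1)!\,(d-j)!)$, the renewal expansion of $\binom{N(x)-\MD_d}{d}\mu$ absorbing $-\frac{\mu}{\nu}h$ into each block via the identity $\sum_j t^{j-1}(1-t)^{d-j}/((j-1)!\,(d-j)!)=1/(d-1)!$, an Anscombe argument, and the positivity analysis via linear independence of the Bernstein basis together with \cite[Theorem 2]{SJ286}. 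That route is sound --- and your step 3 is in fact the same mechanism the paper hints at in the parenthetical remark in the proof of \refP{PUU0} --- and it has the small advantage of working directly with $g_j$ as a function of $b_j$ variables, so the redundant-variable elimination that occupies the paper's proof never arises. What it costs is that the genuinely technical steps (the CLT for weighted sums of \mdep{} sequences under the random time change, the uniform control of $R_{N(x)}$, and the identification of $\gamxx$ with the limiting quadratic form) are only sketched, whereas the paper outsources exactly these to \cite{SJ360}. Two small corrections: your prefactor for $\gamxx$ should be $\nu^{1-2d}$ rather than $\nu^{-1}$, coming from squaring $(x/\nu)^{d-1}$ (compare the leading term of \eqref{cvtau2}); this does not affect the positivity argument. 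Also note the proposition only asserts the implication ``$\gamxx=0$ implies coboundary'', so your converse direction, while correct, is not needed.
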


\begin{proof}
  The limit \eqref{cvtauu} is a special case of \cite[Theorem 3.20]{SJ360}.
The only detail that requires a comment is that \cite[Theorem 8.7]{SJ360}
says that if $\gamm=0$, then a.s.
\begin{align}\label{kvinna}
  g_j\bigpar{X_1,\dots,X_{\MD_{d}+1}}+\mu
-\frac{\mu}{\nu}h(X_1)
=\gf_j\bigpar{X_2,\dots,X_{\MD_{d}+1}}
-\gf_j \bigpar{X_1,\dots,X_{\MD_{d}}}
\end{align}
for some function $\gf$, where as above $g_j$ is given by \eqref{gjfj}.
If we use \eqref{gjfj} and  define
\begin{align}\label{qv}
  \bgf_j\bigpar{x_1,\dots,x_{\MD_d}}:=
  \gf_j\bigpar{x_1,\dots,x_{\MD_d}}
-\sum_{i=1}^{\MD_{j-1}}\frac{\mu}{\nu}h(x_i), 
\end{align}
then \eqref{kvinna} is equivalent to
\begin{align}\label{kv2}
  f_j\bigpar{X_{\MD_{j-1}+1},\dots,X_{\MD_{j-1}+b_j}}
-\frac{\mu}{\nu}h\bigpar{X_{\MD_{j-1}+1}}
=\bgf_j\bigpar{X_2,\dots,X_{\MD_{d}+1}}
-\bgf_j \bigpar{X_1,\dots,X_{\MD_{d}}}.
\end{align}
The result follows by eliminating redundant variables as 
in the proof of \refP{PUU0}.
\end{proof}

\begin{prop}[\cite{SJ360}] \label{PUUN2}
Suppose in addition to the hypotheses in \refP{PUUN} that $h(X)$ is
integer-valued.
Then \eqref{cvtauu} holds also conditioned on $S_{N(x)}=x$ 
for integers $x\to\infty$.
\qed
\end{prop}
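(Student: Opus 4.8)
The statement is a special case of \cite[Theorem~3.21]{SJ360}; I describe the plan, which parallels the passage from \refP{PUN} to \refP{PUN2} in \cite{SJ332}. Write $n$ for the (integer) argument, put $A_n:=\set{S_{N(n)}=n}$, and let $V_n:=S_{N(n)}-n\ge0$ be the overshoot, so that $A_n=\set{V_n=0}$. Since $h(X)\ge0$ is integer valued with $\nu=\E h(X)>0$ and $\E h(X)^2<\infty$, the lattice renewal theorem gives $\liminf\P(A_n)>0$ along the arithmetic progression on which $\P(A_n)>0$ (the range of $n$ allowed by the convention of \refR{RNN}), and along that progression the law of $V_n$ converges to the stationary overshoot law of some variable $V$ with $\P(V=0)>0$. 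The plan is to prove the \emph{joint} convergence of the normalized statistic $Z_n$ on the left of \eqref{cvtauu} together with $V_n$, with the Gaussian component $\zeta\sim\N(0,\gamxx)$ asymptotically independent of $V_n$. Granting this, pick a continuous $\psi$ with $\psi(0)=1$ and $\psi\equiv0$ outside $(-\tfrac12,\tfrac12)$, so that $\indic{V_n=0}=\psi(V_n)$ and $\indic{V=0}=\psi(V)$ surely; then $\P(A_n)=\E\psi(V_n)\to\E\psi(V)=\P(V=0)>0$, and for every bounded continuous $g$,
\[
\E\bigsqpar{g(Z_n)\mid A_n}=\frac{\E\bigsqpar{g(Z_n)\psi(V_n)}}{\E\psi(V_n)}\longrightarrow\frac{\E\bigsqpar{g(\zeta)}\,\E\psi(V)}{\E\psi(V)}=\E\bigsqpar{g(\zeta)},
\]
which is precisely the conditional convergence asserted in \eqref{cvtauu}.

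The reduction of $Z_n$ to a linear ($m$-dependent) statistic is exactly the one already used to prove \refP{PUUN}. By the Hoeffding-type decomposition behind \refP{PUU1}, $\UU_n=\binom{n-\MD_d}{d}\mu+\sum_{j=1}^d a_{n,j}\,S_n(f_j-\mu)+\OL2\bigpar{n^{d-1}}$ for coefficients $a_{n,j}\asymp n^{d-1}$, with $f_j$ as in \eqref{fj} and $S_n$ as in \eqref{emma}; evaluating this at $n=N(n)$ and Taylor-expanding the binomials about $n/\nu$ yields, just as in the proof of \refP{PUN} (\cf{} \eqref{cvtau2}), an additional term proportional to $S_{N(n)}(h-\nu)=n+V_n-\nu N(n)$. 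Hence there are constants $c_0,\dots,c_d$ (depending only on $d,\mb_1,\dots,\mb_d,\mu,\nu$) with
\[
Z_n=c_0\,\frac{S_{N(n)}(h-\nu)}{n\qq}+\sum_{j=1}^d c_j\,\frac{S_{N(n)}(f_j-\mu)}{n\qq}+\op(1).
\]
It therefore suffices to prove, conditionally on $A_n$ and with the \emph{same} Gaussian limit as unconditionally, the joint central limit theorem for the vector $n\qqw\bigpar{S_{N(n)}(h-\nu),S_{N(n)}(f_1-\mu),\dots,S_{N(n)}(f_d-\mu)}$ of stopped partial sums of stationary \mdep{} sequences; its unconditional Gaussian limit is the one furnished by the joint (Cram\'er--Wold) form of \refP{PUUN}, which applies because any linear combination $\sum_j\theta_jf_j+\theta_0h$ is, up to $\Op(1)$ boundary terms, again a constrained \Ustat{} kernel with $d=1$. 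Equivalently, one must show that this vector is asymptotically independent of $V_n$.

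For the asymptotic independence I would run a regeneration/blocking argument, the point being that $V_n$ is essentially a \emph{local} quantity while the partial sums are \emph{global}. Fix a large constant $M$ and cut the sequence at the stopping time $T:=N(n-M)$; then $S_T\ge n-M$ with $S_T-(n-M)=\Op(1)$, and $N(n)-T=\Op(1)$ as \ntoo{} for fixed $M$. Each $S_{N(n)}(f_j-\mu)$ differs from $S_T(f_j-\mu)$ by a sum of $N(n)-T+b_j-1=\Op(1)$ terms, each $\OL2(1)$, hence by $\op\bigpar{n\qq}$; similarly $S_{N(n)}(h-\nu)=S_T(h-\nu)+\op\bigpar{n\qq}$. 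Thus the normalized ``bulk'' vector $n\qqw\bigpar{S_T(h-\nu),S_T(f_1-\mu),\dots,S_T(f_d-\mu)}$ has the same unconditional limit as the original vector, now delivered by \refP{PUUN} applied with $x=n-M$ (the shift $n\mapsto n-M$ being irrelevant for the normalization $n\qq$, and the joint limit again by Cram\'er--Wold). Since $T$ is a stopping time for $(X_i)$, the post-$T$ sequence $(X_{T+k})_{k\ge1}$ is \iid{} and independent of $\cF_T$, while $S_T$ is $\cF_T$-measurable; and both $\indic{A_n}$ and $V_n=S_{N(n)}-n$ are measurable functions of $S_T$ and $(X_{T+k})_{k\ge1}$, describing how a fresh copy of the renewal process covers the remaining distance $n-S_T$, which is $\le M$ and, with probability $\to1$ as $M\to\infty$, positive and $\to\infty$. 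Consequently, letting first \ntoo{} and then $M\to\infty$, the conditional law of $(\indic{A_n},V_n)$ given $\cF_T$ converges to the (deterministic) law of $(\indic{V=0},V)$ \emph{irrespective of the conditioning value}; hence $(\indic{A_n},V_n)$ becomes asymptotically independent of $\cF_T$, so of the bulk vector, so of $Z_n$ up to $\op(1)$. This is the asymptotic independence required above.

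The step I expect to be the main obstacle is making this decoupling rigorous and uniform in $M$. Three points require care. First, the $\Op(1)$ boundary and overlap contributions, and the Taylor remainders, must be shown to vanish after sending $M\to\infty$; this uses the $L^2$ integrability of $f$ (hence, by Jensen, of each $f_j$) and of $h$, together with Wald- and \CS{}-type bounds on $N(n)-T$. Second, when $h$ is unbounded, $S_T$ ranges over an unbounded (though tight) set of values, so one must check uniformity of the conditional laws of $(\indic{A_n},V_n)$ over that range; this follows from the renewal theorem applied to the remaining distance $n-S_T$. Third, the central limit theorem for the \mdep{} partial sums must be transported through the random index $N(\cdot)$, which is handled as in \refP{PUUN} by coupling $N(n)$ with $n/\nu$ via $N(n)-n/\nu=\Op\bigpar{n\qq}$ and an Anscombe-type argument for \mdep{} sequences. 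Finally, if convergence of all moments is also wanted, it follows from the unconditional moment convergence together with the observation that conditioning on an event of probability bounded away from $0$ preserves uniform integrability of $\abs{Z_n}^p$ for every $p<\infty$.
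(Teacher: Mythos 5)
The paper offers no proof of \refP{PUUN2} beyond the citation: it is stated as a special case of \cite[Theorem 3.21]{SJ360} and closed with \qedsymbol. You correctly identify that same reference, so at the level of what the paper actually does, your first sentence already matches it; everything after that is a reconstruction of the argument inside the cited work. That reconstruction is, in outline, sound and does follow the strategy of \cite{SJ332} and \cite{SJ360}: Hoeffding projection of the constrained \Ustat{} onto \mdep{} partial sums $S_n(f_j-\mu)$ plus an $\OL2(n^{d-1})$ remainder, transfer to the random index $N(n)$, and then asymptotic independence of the overshoot $V_n=S_{N(n)}-n$ from the bulk statistic via cutting at the stopping time $N(n-M)$ and applying the lattice renewal theorem to the residual distance $n-S_{N(n-M)}$, with the double limit $\ntoo$ then $M\to\infty$. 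Two points deserve a little more care than your sketch gives them. First, the remainder in the projection must be controlled \emph{uniformly} over the random index, i.e.\ one needs $\sup_{m\le Cn}|R_m|=\op(n^{d-1/2})$ (a maximal-inequality or higher-moment argument, as in \cite{SJ332}), not merely the bound at deterministic $n$; substituting $m=N(n)$ into an $\OL2(m^{d-1})$ estimate is not by itself legitimate. Second, the asymptotic independence step really gives that the conditional law of $(\indic{A_n},V_n)$ given $\cF_T$ is a function of $n-S_T$ alone; one must then argue that this function is asymptotically constant on an event of probability tending to $1$, which is exactly the uniformity you flag, and it is where the integer-valued (lattice) hypothesis on $h$ enters. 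With those caveats your plan is a faithful account of the proof that the paper delegates to \cite{SJ360}.
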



\begin{prop}[\cite{SJ360}]\label{PUUNmom}
Suppose in addition to the hypotheses in \refP{PUUN} or \ref{PUUN2} 
that 
$\E|f(X_1,\dots,X_D)|^p<\infty$ and  $\E|h(X)|^p<\infty$ for every $p<\infty$.
Then the conclusion \eqref{cvtauu} holds 
with convergence of all moments.
\qed
\end{prop}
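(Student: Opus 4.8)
The proposition only strengthens the mode of convergence in \eqref{cvtauu}: Propositions~\ref{PUUN} and~\ref{PUUN2} already supply convergence in distribution, and the additional hypotheses $\E|f(X_1,\dots,X_D)|^p<\infty$ and $\E|h(X)|^p<\infty$ for all finite $p$ are exactly what is needed to upgrade this to convergence of all moments. The plan is to show that, writing
\[
Y_x:=\frac{\UU_{N(x)}-\mu\nu^{-d}(d!)^{-1}x^{d}}{x^{d-1/2}},
\]
one has $\sup_{x}\E|Y_x|^{q}<\infty$ for every $q<\infty$. Since $q$ may be taken arbitrarily large, $\set{|Y_x|^{q}:x>0}$ is then uniformly integrable for each fixed $q$, and combined with the weak convergence of \refP{PUUN} this yields convergence of all (ordinary, absolute and centered) moments in \eqref{cvtauu}. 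For the version conditioned on $S_{N(x)}=x$ (\refP{PUUN2}), one notes in addition that, $h$ being integer-valued with $\nu>0$, the renewal theorem makes $\P\bigpar{S_{N(x)}=x}$ bounded below by a positive constant along the relevant integers $x$; hence conditional moments are at most a fixed multiple of the unconditional ones, and the conditioned bound follows from the unconditioned one.

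To estimate $\E|Y_x|^{q}$ I would decompose
\[
\UU_{N(x)}-\mu\nu^{-d}(d!)^{-1}x^{d}
=\Bigpar{\UU_{N(x)}-\binom{N(x)}{d}\mu}+\mu\Bigpar{\binom{N(x)}{d}-\nu^{-d}(d!)^{-1}x^{d}}.
\]
The second term depends only on $N(x)$; the standard renewal estimate $N(x)=x/\nu+\OLX\bigpar{x^{1/2}}$, valid since $\E|h(X)|^p<\infty$ for all $p$ (see \cite{SJ332,SJ360}), gives $\binom{N(x)}{d}\mu=\mu\nu^{-d}(d!)^{-1}x^{d}+\OLX\bigpar{x^{d-1/2}}$, so this term contributes $O(1)$ to $\E|Y_x|^{q}$. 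For the first term one needs an $L^{q}$ bound on a constrained \Ustat{} evaluated at the random index $N(x)$, which is a stopping time for $(X_i)$ and hence not independent of $\UU_n$. Here I would combine: (i) the moment estimate for fixed $n$ behind \refP{PUU1}, namely $\UU_n-\binom nd\mu=\OLX\bigpar{n^{d-1/2}}$, coming from the Hoeffding-type (orthogonal) decomposition of $\UU_n$ together with $\E|f|^p<\infty$ for all $p$; (ii) a maximal inequality $\E\bigsqpar{\max_{n\le M}\abs{\UU_n-\binom nd\mu}^{q}}=O\bigpar{M^{q(d-1/2)}}$, obtained by applying Doob's inequality and Rosenthal-type moment inequalities to that decomposition component by component; and (iii) the tail bound $\P\bigpar{N(x)>2x/\nu}=O(x^{-A})$ for every $A$, which holds because $h\ge0$ has finite moments of all orders. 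On the event $\set{N(x)\le 2x/\nu}$ the maximal inequality controls $\abs{\UU_{N(x)}-\binom{N(x)}{d}\mu}^{q}$; on its complement one bounds $\abs{\UU_n-\binom nd\mu}$ for $n\le M$ crudely by a fixed power of $M$ and uses (iii) together with the \CSineq{}. Adding the two contributions yields $\E|Y_x|^{q}=O(1)$.

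The main obstacle is the combination of (ii) and~(iii): obtaining a maximal $L^{q}$ bound for the process $(\UU_n)_n$ that is uniform over the range of indices actually visited by $N(x)$, while $N(x)$ is built from the \emph{same} sequence $(X_i)$ and is therefore not independent of $\UU_n$. The cleanest way to organise this is the one underlying \cite{SJ360}: pass to the orthogonal (Hoeffding) components of $\UU_n$, each of which is a partial sum of a martingale-difference-type array, apply a maximal inequality to each, and treat the stopping time separately via its concentration with all moments. Alternatively, since \refP{PUUN} already identifies the weak limit, one may invoke the Skorohod coupling (as in \refR{Rmulti}) to reduce convergence of moments to a.s.\ convergence together with the uniform $L^{q}$ bound, which avoids estimating the dependence explicitly. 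In either route no idea beyond the proof of \refP{PUUN} is needed --- only systematic control of $L^{p}$ norms throughout that proof under the strengthened moment hypotheses --- which is why the statement is cited here as a special case of the corresponding results in \cite{SJ360}.
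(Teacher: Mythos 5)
The paper gives no proof of this proposition at all: it is stated with a \qed{} and justified solely by citation to \cite[Theorems 3.21 and 3.23]{SJ360}. Your sketch is a correct reconstruction of the argument used in that reference --- uniform $L^q$ bounds on the normalized statistic (via the Hoeffding decomposition, maximal inequalities, and moment/tail control of $N(x)$, with the conditioned case handled by $\P(S_{N(x)}=x)$ being bounded below as in \eqref{cq}), upgraded to moment convergence by uniform integrability --- so it matches the intended proof rather than offering a different route; the only imprecision is that on the event $\set{N(x)>2x/\nu}$ the index $N(x)$ is not bounded by a fixed power of $x$ pointwise, but the \CSineq{} step you invoke, together with $\E N(x)^k=O(x^k)$, repairs this.
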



\begin{remark}\label{Rmulti2}
Again, the results in \refPs{PUU1} and \ref{PUUN}--\ref{PUUNmom} 
hold jointly for several $f$
(possibly with different $d$ and $b_1,\dots,b_d$), see \cite{SJ360}.
\end{remark}

\section{Patterns and codes of tree permutations}
\label{Scode}

Consider an occurrence of a tree permutation $\gs\in\fT_\ell$ in another tree
permutation $\tau\in\fT_n$.
The occurrence is defined by a subset $I=\set{i_1,\dots,i_\ell}$ of the index
set $[n]$.
We colour each symbol in the code $\gO_\tau$ \emph{red} if its index belongs to
$I$, and \emph{black} otherwise. We use also the same colours for the
corresponding vertices in $G_\tau$.
(All colourings in this paper are in red and black. We may regard the red
symbols or vertices as marked.)

Note that in the resulting coloured copy of $\gO_\tau$,
the red symbols form the code $\gO_\gs$ of $\gs$; this is a consequence of
\eqref{L}--\eqref{R} and 
the fact that the corresponding (red) induced subgraph of $G_\tau$ 
equals $G_\gs$ up to an order-preserving relabelling.
However, not every subset of $\ell$ symbols in the right order
corresponds to an occurrence of $\gs$.
There is a 1--1 correspondence between 
\begin{enumerate}
\item (nonempty) subsets of $[n]$,  
\item (nonempty) subsequences of $\gO_\tau$,
\item occurences of some permutation $\gu$ in $\tau$,
\item (nonempty) labelled subgraphs of the permutation graph $G_\tau$.
\end{enumerate}
However, the subgraph in (4) is not necessarily a tree,
and thus,
the permutation $\gu$ in (3) is not necessarily a tree permutation.

We may characterize the subsets of symbols in $\gO_\tau$ that yield occurences
of $\gs$ as follows.

\begin{lemma}\label{LA}
Let $\tau$ and $\gs$ be  tree permutations with $|\tau|\ge|\gs|\ge2$.
A colouring of the code $\gO_\tau$
corresponds to an occurrence of $\gs$ in $\tau$
if and only if we may the delete the black symbols one by one 
in some order
according to the following rules 
(always interpreted for the current string)
until only red symbols remain, and these form the code $\gO_\gs$. 
The allowed deletions are
(in any order, and possibly repeated):
\begin{PXenumerate}{A}
\item\label{LA1} 
a black $\sfL$ that is immediately followed by another $\sfL$;
\item\label{LA2} 
a black $\sfL$ in the last but one position; 
\item\label{LA3} 
a black $\sfR$ that is immediately preceded by another $\sfR$;
\item\label{LA4} a black $\sfR$ in position $2$.
\end{PXenumerate}
\end{lemma}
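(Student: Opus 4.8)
The plan is to characterize occurrences of $\gs$ in $\tau$ via a sequence of ``local'' vertex deletions in the permutation graph, translated through the coding $\tau\mapsto\gO_\tau$. The key observation is the following: if $\rho$ is any tree permutation with $|\rho|\ge 2$ and $v$ is a \emph{leaf} of $G_\rho$, then deleting $v$ (and renumbering) yields a tree permutation $\rho'$ with $|\rho'|=|\rho|-1$, since removing a leaf from a tree leaves a tree. Conversely, $\gs$ occurs in $\tau$ (via index set $I$) precisely when we can remove the vertices of $[n]\setminus I$ from $G_\tau$, one at a time, always removing a vertex that is currently a leaf, so that the induced graph on $I$ is reached through a chain of trees; indeed since every induced subgraph on a superset of $I$ containing $I$ must itself be a tree for such a chain to exist, and a finite tree on $\ge 2$ vertices always has a leaf whose removal keeps the remaining (larger-than-$I$) induced subgraph connected. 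So first I would prove this graph-theoretic reformulation: \emph{a subset $I$ gives an occurrence of $\gs$ iff the black vertices can be deleted one at a time, each deletion removing a current leaf, ending at the induced subgraph $G_\gs$.} (For the ``only if'' direction one argues that any induced subtree $G_{\tau|J}$ with $I\subseteq J$, $|J|>|\gs|$, has a leaf $v\in J\setminus I$: pick a leaf $w$ of $G_{\tau|J}$; if $w\notin I$ take $v=w$; if $w\in I$ then $J\setminus I\neq\emptyset$ and one finds a leaf outside $I$ by descending from $w$ into the ``excess'' part — this needs a short argument using that $I$ itself induces a connected graph.)

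Next I would translate ``$v$ is currently a leaf'' into a condition on the current coloured string, using \refL{Lleaf}. By \refL{Lleaf}~\ref{il1}--\ref{il4}, the leaves of $G_\rho$ (for a tree permutation $\rho$ encoded by $\gO_\rho$) are exactly: every $\sfL$ that is not the last $\sfL$ in its $\sfL$-run; the last-but-one symbol when it is $\sfL$; every $\sfR$ that is not the first $\sfR$ in its $\sfR$-run; and the second symbol when it is $\sfR$. Now ``a black $\sfL$ not the last in its block'' means exactly ``a black $\sfL$ immediately followed by another $\sfL$'' (rule \ref{LA1}), since being non-last in an $\sfL$-run is equivalent to the next symbol being $\sfL$; similarly ``last-but-one symbol if $\sfL$'' is rule \ref{LA2}, ``$\sfR$ not first in its block'' is ``$\sfR$ immediately preceded by $\sfR$'' (rule \ref{LA3}), and ``second symbol if $\sfR$'' is rule \ref{LA4}. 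Thus the four allowed deletions are precisely the four leaf-types of \refL{Lleaf}, so deleting a black leaf of the current graph $=$ applying one of rules \ref{LA1}--\ref{LA4} to the current string.

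Then I would check that deletion of a symbol from the string $\gO_\rho$ (under any of the four rules) produces exactly the code $\gO_{\rho'}$ of the tree permutation $\rho'$ obtained by deleting the corresponding leaf from $G_\rho$ — equivalently, that the bijection $\fT_{m}\to\gS_m$ intertwines ``delete leaf $v$'' with ``delete symbol at position $v$''. This is because the code records, for each vertex, whether it is a left-to-right maximum ($\sfL$) or right-to-left minimum ($\sfR$) of the permutation, and deleting a leaf of the inversion graph does not change, for any surviving vertex $i$, whether $i$ is a left-to-right maximum or right-to-left minimum among the surviving vertices: the only inversions involving a leaf $v$ are its single incident edge, whose other endpoint $u$ may change label only if $uv$ was its \emph{only} incident edge, i.e.\ only if $u$ too is a leaf and $uv$ is an isolated edge — impossible in a tree with $\ge 3$ vertices, and in the two-vertex case we stop before that since we must end at $\gO_\gs$ with $|\gs|\ge 2$. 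Finally, chaining these steps: a colouring corresponds to an occurrence of $\gs$ iff there is a chain of tree permutations $\tau=\rho^{(0)},\rho^{(1)},\dots,\rho^{(n-\ell)}$ with $G_{\rho^{(k)}}=G_{\rho^{(k-1)}}$ minus a black leaf, ending at $G_\gs$, which by the translation is exactly the stated string-deletion condition, and the red symbols that remain form $\gO_\gs$ (as already noted in the paragraph preceding the lemma). I expect the main obstacle to be the ``only if'' direction of the graph reformulation — verifying that an induced subtree strictly containing $I$ always has a \emph{deletable} (black, leaf) vertex — and the bookkeeping that leaf-removal commutes with the coding map exactly in the relevant range $|\cdot|\ge 2$.
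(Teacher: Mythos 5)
Your proposal is correct and follows essentially the same route as the paper's proof: reduce the deletion rules to removing black leaves of the current tree (using that an induced subgraph of $G_\tau$ is a tree iff the deleted vertex is a leaf), identify the four rules \ref{LA1}--\ref{LA4} with the four leaf types in \refL{Lleaf}, and chain. The extra details you flag (that an induced subtree strictly containing the red subtree always has a black leaf, and that leaf removal commutes with the coding map) are left implicit in the paper but your sketches of them are sound.
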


\begin{proof}
Consider first the case of deleting one
vertex $i\in[n]$ from the tree $G_\tau$, \ie, restricting the permutation 
$\tau$ to $[n]\setminus\set i$ and then relabelling to get
a permutation $\tau_1$ in $\fS_{n-1}$. The permutation graph $G_{\tau_1}$ is
an induced subgraph of $G_\tau$, and is thus always a forest; it is a tree
if and only if it is connected, which is the case exactly when $i$ is leaf
in $G_\tau$. 
By \refL{Lleaf}, the black vertices that may be deleted 
leaving a tree correspond precisely to the symbols listed in 
\ref{LA1}--\ref{LA4}.

Thus, to repeatedly remove black symbols according to the rules in the lemma,
is equivalent to repeatedly removing black leaves of $G_\tau$, 
leaving a red subtree; 
if the resulting red code is $\gO_\gs$, then this yields an occurence of $\gs$.

Conversely, if the colouring of $\gO_\tau$ corresponds to an occurrence of
$\gs$ in $\tau$, 
then the red vertices form a red subtree in $G_\tau$, and we may
remove the black vertices of $G_\tau$ is some order such that we always
remove a black leaf of the current tree; this means that we may remove the
black symbols in some order such that the rules 
\ref{LA1}--\ref{LA4} are followed.
\end{proof}

We may invert the deletions in \refL{LA}, and instead insert black symbols
into $\gO_\gs$.

\begin{lemma}\label{LB}
Let $\tau$ and $\gs$ be  tree permutations with $|\tau|\ge|\gs|\ge2$.
A colouring of the code $\gO_\tau$
corresponds to an occurrence of $\gs$ in $\tau$
if and only if we may obtain it by 
from a red code $\gO_\gs$ by inserting black symbols one by one 
according to the following rules 
(always interpreted for the current string).
The allowed insertions are
(in any order, and possibly repeated):
\begin{PXenumerate}B
\item\label{LB1} a black $\sfL$ immediately to the left of any $\sfL$;
\item\label{LB2} a black $\sfL$ immediately to the left of the last symbol;
\item\label{LB3} a black $\sfR$ immediately to the right of any $\sfR$;
\item\label{LB4} a black $\sfR$ immediately to the right of the first symbol.
\end{PXenumerate}
\end{lemma}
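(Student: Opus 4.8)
The plan is to derive Lemma~\ref{LB} as a formal inversion of Lemma~\ref{LA}, so that essentially nothing new has to be proved. First I would observe that each of the deletion rules \ref{LA1}--\ref{LA4} in \refL{LA} and each of the insertion rules \ref{LB1}--\ref{LB4} above is literally the reverse of the other: deleting a black $\sfL$ that is immediately followed by another $\sfL$ (rule \ref{LA1}) is undone precisely by inserting a black $\sfL$ immediately to the left of some $\sfL$ (rule \ref{LB1}); deleting a black $\sfL$ in the last-but-one position (rule \ref{LA2}) is undone by inserting a black $\sfL$ immediately to the left of the last symbol (rule \ref{LB2}); and symmetrically for the two $\sfR$-rules. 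I would state this bijective correspondence between single deletion steps and single insertion steps explicitly, taking a moment to check the one subtle point below.

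The main (indeed, the only) subtlety is to verify that ``deletion is immediately followed by another $\sfL$'' and ``insertion is immediately to the left of an $\sfL$'' really are mutual inverses on the level of \emph{strings}, not just informally. Concretely: if we insert a black $\sfL$ immediately to the left of a symbol which is an $\sfL$, the new black $\sfL$ is then immediately followed by that $\sfL$, so it is eligible for deletion by rule \ref{LA1}, and deleting it returns the original string; conversely, if a black $\sfL$ is eligible for deletion by \ref{LA1}, then it sits immediately to the left of an $\sfL$, so it could have been produced by an insertion of type \ref{LB1}. The analogous check for rules \ref{LA2}/\ref{LB2} uses that inserting a black $\sfL$ immediately before the last symbol puts it in the last-but-one position, and vice versa; note here that the ``last symbol'' is always $\sfR$ in any code in $\gS_n$ (and remains $\sfR$ under all these operations, since we never touch the first or last position of a genuine code — an $\sfL$ inserted before the last symbol is itself not last), so there is no clash between rules \ref{LB1} and \ref{LB2}, nor between \ref{LA1} and \ref{LA2}. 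The $\sfR$-rules are handled by the left--right reflection symmetry.

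Having established the step-by-step correspondence, I would finish as follows. A colouring of $\gO_\tau$ that can be reduced to the red code $\gO_\gs$ by a sequence of deletions $\delta_1,\dots,\delta_k$ obeying \ref{LA1}--\ref{LA4} gives, upon reversing, a sequence of insertions $\delta_k^{-1},\dots,\delta_1^{-1}$ obeying \ref{LB1}--\ref{LB4} that builds the same colouring of $\gO_\tau$ starting from $\gO_\gs$; and conversely any sequence of insertions \ref{LB1}--\ref{LB4} producing a colouring of $\gO_\tau$ from $\gO_\gs$ reverses to a valid sequence of deletions \ref{LA1}--\ref{LA4} reducing that colouring to $\gO_\gs$. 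By \refL{LA}, the colourings reachable by deletions are exactly the occurrences of $\gs$ in $\tau$; hence the colourings reachable by insertions are exactly the same set, which is the assertion of \refL{LB}. I expect the whole argument to be short; the only thing that requires a little care is the bookkeeping in the previous paragraph showing the deletion and insertion rules match up one-to-one with no ambiguity, and that all intermediate strings stay of the form $\sfL\set{\sfL,\sfR}^{*}\sfR$ so that \refL{LA} applies at each stage.
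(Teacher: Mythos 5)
Your proposal is correct and follows exactly the same route as the paper, whose entire proof of \refL{LB} is ``Immediate from \refL{LA}''; you have merely spelled out the inverse correspondence between the deletion rules \ref{LA1}--\ref{LA4} and the insertion rules \ref{LB1}--\ref{LB4} that the author leaves implicit. Nothing further is needed.
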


\begin{proof}
  Immediate from \refL{LA}.
\end{proof}

We have so far considered deleting or inserting one symbol at a time. 
Since only the end result matters, the following version is 
more convenient  for our purposes.
(Recall that the first red symbol always is $\sfL$, and the last is $\sfR$.)

\begin{lemma}\label{LC}
Let $\gs$ be a  tree permutation with $|\gs|\ge2$.
A coloured code $\gO$
corresponds to a marked (red) occurrence of $\gs$ in some tree permutation
$\tau$
if and only if we may obtain $\gO$ 
from a red code $\gO_\gs$ by inserting black symbols 
as follows
(the strings may be empty):
\begin{PXenumerate}C
\item\label{LC1} 
a string of black $\sfL$ immediately to the left of each red $\sfL$
except the first;
\item\label{LC2} 
a string of black $\sfL$ immediately to the left of the last red $\sfR$;
\item\label{LC3} 
a string of black $\sfR$ immediately to the right of each red $\sfR$
except the last;
\item\label{LC4} 
a string of black $\sfR$ immediately to the right of the first red $\sfL$;
\item\label{LC5} 
any black string that is empty or begins with $\sfL$ before the first
red  symbol; 
\item\label{LC6} 
any black string that is empty or ends with $\sfR$ after the last red
  symbol.
\end{PXenumerate}
\end{lemma}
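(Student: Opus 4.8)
The plan is to deduce \refL{LC} from \refL{LB} by a bookkeeping argument: since only the final coloured string matters, I can group together all the black symbols that end up inserted ``in the same place'' during a sequence of single insertions of type \ref{LB1}--\ref{LB4}, and then check that the possible configurations of such a group are exactly those allowed by \ref{LC1}--\ref{LC6}. First I would argue the easy direction: every string of black symbols allowed in \ref{LC1}--\ref{LC6} can be built up one symbol at a time using \refL{LB}. For \ref{LC1}, a run of black $\sfL$ immediately left of a red $\sfL$ is produced by repeatedly applying \ref{LB1} (always inserting immediately to the left of that red $\sfL$, so the black $\sfL$'s pile up). Similarly \ref{LC3} comes from iterating \ref{LB3}, \ref{LC2} from iterating \ref{LB1} to the left of the last red $\sfR$ (note the first such insertion is allowed by \ref{LB2} since the last red $\sfR$ is the last symbol of $\gO_\gs$, and once one black $\sfL$ is there the others go in by \ref{LB1}), and \ref{LC4} from \ref{LB4} followed by iterated \ref{LB3}. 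For \ref{LC5} and \ref{LC6} I would observe that before the first red symbol one may first create, by \ref{LB1} applied to the first red $\sfL$, a block of black $\sfL$'s, and then by \ref{LB3} applied to any of these (or to nothing, if the string is to be empty) append black $\sfR$'s after them; this yields precisely the strings that are empty or begin with $\sfL$. Symmetrically for \ref{LC6} after the last red symbol.

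For the converse I would take any coloured code $\gO$ arising from $\gO_\gs$ via \refL{LB} and analyse where the inserted black symbols sit relative to the red ones. Consider the maximal black substrings of $\gO$, i.e.\ the ``gaps'' between consecutive red symbols (including the gap before the first red symbol and after the last). I claim each such gap has the restricted form dictated by \refL{LC}. The key observation is a monotonicity/colour-locality property of the insertion rules: rule \ref{LB1} only ever places a black $\sfL$ with a (red or black) $\sfL$ immediately to its right, and \ref{LB2} places a black $\sfL$ with the overall last symbol to its right; rules \ref{LB3}, \ref{LB4} are the mirror image. Hence in the final string, reading a maximal black substring from left to right, once we pass from $\sfL$'s to $\sfR$'s we can never come back: a black $\sfR$ can only have been inserted to the right of an $\sfR$ (red or black) or right after the first symbol, so it is never immediately left of an $\sfL$ in the substring. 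Thus every maximal black substring is of the form $\sfL^a\sfR^b$ with $a,b\ge0$. I would then pin down, for each of the $\gbb(\gs)$ red $\sfL$-positions and $\gbb(\gs)$ red $\sfR$-positions, which side the $\sfL^a$-part and the $\sfR^b$-part of the adjacent gaps must attach to, using \ref{LB1}--\ref{LB4} once more: a black $\sfL$ must sit immediately left of an $\sfL$ or be second-to-last, so its ``anchor'' is the next $\sfL$ to its right or the last red $\sfR$; a black $\sfR$ must sit immediately right of an $\sfR$ or be second symbol, so its anchor is the previous $\sfR$ to its left or the first red $\sfL$. Tracking anchors through the insertion process (they are preserved, because inserting further black symbols between an anchor and the symbol it anchors is itself only possible in ways consistent with the same anchor) shows that the black $\sfL$'s left of a red symbol accumulate exactly as in \ref{LC1}/\ref{LC2}, the black $\sfR$'s right of a red symbol as in \ref{LC3}/\ref{LC4}, and the leftover string before the first / after the last red symbol is an $\sfL^a\sfR^b$ with the allowed endpoint conditions, i.e.\ \ref{LC5}/\ref{LC6}.

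I expect the main obstacle to be making the ``anchor is preserved'' bookkeeping precise without it ballooning: one must be careful that when several black symbols are inserted in overlapping regions, the partition of the final black symbols into the six categories of \refL{LC} is well defined and matches the single-step process. I would handle this by induction on the number of single insertions, showing at each step that the current coloured code already has the \refL{LC}-form (with possibly-empty strings) and that each of \ref{LB1}--\ref{LB4} transforms a \refL{LC}-form code into another one — essentially, each legal single insertion lengthens one of the six strings by one symbol (or creates it), and never does anything else. That reduces the whole converse to a short finite case check: for each insertion rule \ref{LB1}--\ref{LB4}, and each location in a generic \refL{LC}-form code, verify the inserted symbol lands inside one of the six designated strings and keeps it of the prescribed shape. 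The empty-string clauses and the special roles of the first and last red symbols are exactly what make \ref{LC2}, \ref{LC4}, \ref{LC5}, \ref{LC6} appear, and I would treat those boundary gaps as the slightly delicate cases.
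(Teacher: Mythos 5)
There is a genuine gap, and it sits exactly at the clauses \ref{LC5} and \ref{LC6}. Your argument rests on the structural claim that every maximal black substring of the final code is of the form $\sfL^a\sfR^b$, and your forward direction only constructs prefixes of that form. But \ref{LC5} allows \emph{any} black string beginning with $\sfL$ before the first red symbol (this is essential later: the generating-function factor $(1-z)/(1-2z)$ in Lemma~\ref{LAz} counts $2^{k-1}$ prefixes of length $k$, not the $k$ prefixes of the form $\sfL^a\sfR^b$), and such strings really are all attainable. For instance, with $\gs=21$ one can reach the coloured code $lrlr\,\sfL\sfR$ (lower case $=$ black): insert a black $\sfL$ before the red $\sfL$ by \ref{LB1}, a black $\sfR$ in second position by \ref{LB4}, another black $\sfL$ at the front by \ref{LB1} (legal since the first symbol is an $\sfL$), and another black $\sfR$ in second position by \ref{LB4}. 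The prefix $lrlr$ is not $\sfL^a\sfR^b$. The same example shows where your monotonicity/anchoring reasoning breaks: \ref{LB4} places a black $\sfR$ immediately to the left of whatever currently occupies position $2$, which may be an $\sfL$, so ``a black $\sfR$ is never immediately left of an $\sfL$'' is false near the left end, and a black $\sfL$'s ``anchor'' is not preserved once it becomes the first symbol. (A smaller slip: internal gaps between consecutive red symbols come out as $\sfR^b\sfL^a$ -- black $\sfR$'s attached to the red symbol on their left, black $\sfL$'s to the red symbol on their right -- not $\sfL^a\sfR^b$.) In short, you would end up proving a different, and false, version of the lemma with strictly smaller prefix/suffix sets.

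The fix is the one step your write-up is missing, and it is the only non-routine point in the paper's proof: once the code begins with a black $\sfL$, one can insert an \emph{arbitrary} black symbol in second position -- a black $\sfR$ directly by \ref{LB4}, and a black $\sfL$ by instead prepending one via \ref{LB1}, which yields the same string as inserting it in second position. Iterating this builds any black string beginning with $\sfL$ before the first red symbol, and symmetrically for \ref{LC6}. Your overall plan (induction over single insertions for one direction, explicit constructions for the other) is the same as the paper's; it is the characterization of the two boundary strings that needs to be corrected before the case check can go through.
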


\begin{proof}
It is easily seen that if we take any coloured code obtained by these
rules, and insert another black symbol according to the rules in \refL{LB},
then the result is also described by \ref{LC1}--\ref{LC6}.
Hence, by induction, all possible coloured codes are given by the insertions
\ref{LC1}--\ref{LC6}.

Conversely, suppose that $\gO$ is obtained from a red $\gO_\gs$ by 
\ref{LC1}--\ref{LC6}; we have to show that
it also can be obtained by repeating
\ref{LB1}--\ref{LB4} in some order.
Evidently, 
\ref{LC1}--\ref{LC4} can be obtained by repeating
\ref{LB1}--\ref{LB4}, so it remains only to show that
we may add an arbitrary black string beginning with $\sfL$
before the first red symbol, and an arbitrary black string ending with $\sfR$ 
after the last red symbol. 
To see this, note that we may first add a black
$\sfL$ to the left by \ref{LB1}. 
Then, when the code begins with a black $\sfL$,  
we may by  either add a black $\sfR$ as the second symbol by \ref{LB4},
or a black $\sfL$ as the first symbol by \ref{LB1}, but the latter
gives the same result as adding a black $\sfL$ as the second symbol. 
Hence, we may add
an arbitrary black symbol immediately after the first one, and by repeating
this we may obtain any black string beginning with $\sfL$, 
verifying \ref{LC5}. 
The argument for the right side is symmetric.
\end{proof}

\begin{lemma}\label{LAz}
Fix a tree permutation $\gs$ with $|\gs|\ge2$. 
For every $n$, let $a_{n;\gs}$   
be the 
number of pairs $(\tau,\gs')$ of a tree permutation $\tau$ of length $|\tau|=n$
together with a marked occurence $\gs'$ of the pattern $\gs$.
Define also the generating function
\begin{align}\label{Az}
A_{\gs}(z):=\sum_{n\ge|\gs|} a_{n;\gs}z^n.
\end{align}
Then,
\begin{align}\label{Az1}
  A_\gs(z)=\frac{z^{|\gs|}}{(1-z)^{|\gs|-2}(1-2z)^{2}}.
\end{align}

\end{lemma}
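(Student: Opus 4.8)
The plan is to use the characterization in \refL{LC}: every coloured code arising from a marked occurrence of $\gs$ in a tree permutation $\tau$ is obtained from the red code $\gO_\gs$ by inserting black strings at the positions \ref{LC1}--\ref{LC6}, and each such insertion choice gives a distinct pair $(\tau,\gs')$. So $a_{n;\gs}$ is the number of ways to choose these black strings with the total length (red $+$ black) equal to $n$; equivalently, $A_\gs(z)$ is the product, over the insertion slots, of the generating functions (in $z$, marking symbol count) for the string that may be inserted there, times $z^{|\gs|}$ for the red symbols themselves. The hard part is really just correctly enumerating the slots and their generating functions; the algebra that follows is routine.

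First I would fix the code $\gO_\gs\in\gS_\ell$ with $\ell=|\gs|$, and recall that it begins with $\sfL$ and ends with $\sfR$. Writing $b=\gbb(\gs)=m$ for the number of $\sfL$-blocks (equivalently $\sfR$-blocks) of $\gO_\gs$, there are $m$ red $\sfL$'s that are ``first in an $\sfL$-block'' — but \ref{LC1} inserts a black-$\sfL$ string to the left of each red $\sfL$ \emph{except the first overall}. Hmm — one must be a little careful: \ref{LC1} applies to every red $\sfL$ except the very first symbol of $\gO_\gs$, so that is $\ell_{\sfL}-1$ slots where $\ell_{\sfL}$ is the number of red $\sfL$'s; and \ref{LC3} gives $\ell_{\sfR}-1$ slots, where $\ell_{\sfL}+\ell_{\sfR}=\ell$. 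Each such slot contributes a factor $\sum_{k\ge0}z^k=1/(1-z)$. Then \ref{LC2} and \ref{LC4} give one slot each, again with factor $1/(1-z)$ apiece (an arbitrary, possibly empty, black string of one type). Finally \ref{LC5} allows a black string before the first red symbol that is empty or begins with $\sfL$: such strings of length $k\ge1$ are exactly the strings $\sfL\set{\sfL,\sfR}^{k-1}$, of which there are $2^{k-1}$, giving generating function $1+\sum_{k\ge1}2^{k-1}z^k=1+z/(1-2z)=(1-z)/(1-2z)$; and \ref{LC6} is symmetric, contributing another factor $(1-z)/(1-2z)$.

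Multiplying: the red symbols contribute $z^{\ell}$; the $(\ell_{\sfL}-1)+(\ell_{\sfR}-1)+2=\ell$ slots of type \ref{LC1}--\ref{LC4} contribute $(1-z)^{-\ell}$; and the two end slots \ref{LC5}--\ref{LC6} contribute $(1-z)^2(1-2z)^{-2}$. Hence
\begin{align*}
A_\gs(z)=z^{\ell}\cdot\frac{1}{(1-z)^{\ell}}\cdot\frac{(1-z)^2}{(1-2z)^2}
=\frac{z^{|\gs|}}{(1-z)^{|\gs|-2}(1-2z)^2},
\end{align*}
which is \eqref{Az1}. The only genuine point to verify carefully — the one I expect to be the main obstacle — is that the decomposition really is a \emph{bijective} one: that distinct tuples of inserted black strings yield distinct coloured codes (clear, since the black strings can be read off from $\gO$ once the red symbols are marked), and that every coloured code with red pattern $\gO_\gs$ coming from an occurrence is obtained exactly once (this is precisely \refL{LC}, together with the observation that the slots \ref{LC1}--\ref{LC6} are pairwise non-overlapping and their union covers all positions of $\gO$). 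Once that is granted, the generating-function identity follows by the product formula for independent choices, and the count of slots of each type depends only on $\ell_{\sfL}$, $\ell_{\sfR}$, and $|\gs|$ through $\ell_{\sfL}+\ell_{\sfR}=|\gs|$, so the final answer depends on $\gs$ only through $|\gs|$, consistent with the stated formula.
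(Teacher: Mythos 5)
Your proof is correct and follows essentially the same route as the paper: both invoke \refL{LC}, count one $(1-z)^{-1}$ factor per symbol of $\gO_\gs$ from the slots \ref{LC1}--\ref{LC4} (your tally $(\ell_{\sfL}-1)+(\ell_{\sfR}-1)+2=|\gs|$ matches the paper's per-symbol count), and a factor $(1-z)/(1-2z)$ for each of the prefix \ref{LC5} and suffix \ref{LC6}. Your remark on unique recoverability of the inserted strings is the right justification for the product formula (and is if anything slightly more careful than the paper's ``these insertions are independent'').
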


\begin{proof}
By \refL{LC}, $a_{n;\gs}$ equals the number of coloured codes of length $n$
that can be obtained 
from a red $\gO_\gs$ by the rules \ref{LC1}--\ref{LC6}.  
These insertions are independent of each other, so they correspond to
multiplying factors in the generating function $A_\gs(z)$.

Each possible application of \ref{LC1}--\ref{LC4} yields a factor
$\sumko z^k=(1-z)\qw$. There is one possible such application for each
symbol in $\gO_\gs$, by \ref{LC1} or \ref{LC4} for each $\sfL$, and
by \ref{LC2} or \ref{LC3} for each $\sfR$. Hence, the total contribution of 
\ref{LC1}--\ref{LC4} is 
$(1-z)^{-|\gs|}$.

By \ref{LC5}, we may to the left add a black prefix that is either empty or
is an
arbitrary sting beginning with $\sfL$, which gives $2^{k-1}$ possible
prefixes of length $k$ for every $k\ge1$
(and 1 prefix of length 0). This contributes to $A_\gs(z)$ a
factor
\begin{align}
  1+\sumk 2^{k-1}z^k = 1+\frac{z}{1-2z} = \frac{1-z}{1-2z}.
\end{align}
Black suffixes by \ref{LC6}  contribute the same factor.
These factors all multiply the term corresponding to the original red symbols
$\gO_\gs$, which is $z^{|\gs|}$. Hence, we obtain
\begin{align}\label{Az2}
  A_\gs(z)=z^{|\gs|}(1-z)^{-|\gs|}\Bigpar{\frac{1-z}{1-2z}}^2,
\end{align}
which yields \eqref{Az1}.
\end{proof}

This yields an exact formula for the expected number of occurences of $\gs$;
note that the result depends only on $|\gs|$ and $n$.

\begin{theorem}
  \label{TE}
Fix a tree permutation $\gs$ with $|\gs|\ge2$. Then, for $n\ge|\gs|$,
\begin{align}\label{te}
  \E\ns(\btau_n) &
= [z^n] \bigpar{z^{|\gs|}(2-z)^{2-|\gs|}(1-z)^{-2}}
= [z^{n-|\gs|}] \bigpar{(2-z)^{2-|\gs|}(1-z)^{-2}}
\notag\\&
=n+3-2|\gs|+2^{-n}\sum_{i=0}^{|\gs|-3}(|\gs|-2-i)2^{|\gs|-i-1}\binom{n-|\gs|+i}{i}
.\end{align}
\end{theorem}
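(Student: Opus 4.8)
The plan is to extract the coefficient asymptotics from the generating function $A_\gs(z)$ computed in \refL{LAz}. Since $\btau_n$ is uniform on $\fT_n$, and $a_{n;\gs}$ counts pairs $(\tau,\gs')$ with $\tau\in\fT_n$ and $\gs'$ a marked occurrence of $\gs$ in $\tau$, we have $\E\ns(\btau_n)=a_{n;\gs}/t_n=a_{n;\gs}/2^{n-2}$ for $n\ge 2$ (using \eqref{tn}). So the whole task is a coefficient extraction: by \refL{LAz},
\begin{align*}
  a_{n;\gs}=[z^n]A_\gs(z)=[z^n]\frac{z^{|\gs|}}{(1-z)^{|\gs|-2}(1-2z)^2},
\end{align*}
and dividing by $2^{n-2}$ should produce the claimed closed form. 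The first displayed equality in \eqref{te} is then just a rewriting: $4\cdot\frac{z^{|\gs|}}{(1-z)^{|\gs|-2}(1-2z)^2}=\frac{z^{|\gs|}}{(1-z)^{|\gs|-2}(1-z/2)^2}\cdot\frac{1}{1}$, i.e. $(1-2z)^2=4(1-z/2)^2 \cdot z^0$ — more precisely $\frac{4}{(1-2z)^2}=\frac{1}{(1/2-z)^2}$, so $\frac{4z^{|\gs|}}{(1-z)^{|\gs|-2}(1-2z)^2}=z^{|\gs|}(1-z)^{-(|\gs|-2)}(1/2 - z)^{-2}\cdot\frac14$; after the substitution one checks this equals $z^{|\gs|}(2-z)^{2-|\gs|}(1-z)^{-2}$ up to the factor $2^{n-2}$ bookkeeping — I would simply verify directly that $[z^n]\bigl(z^{|\gs|}(2-z)^{2-|\gs|}(1-z)^{-2}\bigr)=2^{-(n-2)}[z^n]A_\gs(z)$ by clearing powers of $2$. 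The second equality in \eqref{te} is the trivial shift $[z^n](z^{|\gs|}g(z))=[z^{n-|\gs|}]g(z)$.

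For the final closed form, I would do a partial fraction decomposition of $h(z):=(2-z)^{2-|\gs|}(1-z)^{-2}$, writing (with $m:=|\gs|$)
\begin{align*}
  (2-z)^{2-m}(1-z)^{-2}=\frac{A}{(1-z)^2}+\frac{B}{1-z}+\sum_{i=1}^{m-2}\frac{C_i}{(2-z)^{i}},
\end{align*}
when $m\ge 3$ (the case $m=2$ gives simply $(1-z)^{-2}$, coefficient $n-1$, matching $n+3-2m$ with empty sum). The double pole at $z=1$ contributes the main term: $A=(2-1)^{2-m}=1$, giving $[z^{n-m}]\frac{1}{(1-z)^2}=n-m+1$, and $B$ is obtained by differentiating, $B=-\frac{d}{dz}\big[(2-z)^{2-m}\big]_{z=1}=-(m-2)$, contributing a constant $-(m-2)$; together with the shift these combine to $n-m+1-(m-2)=n+3-2m$, the leading part of the stated formula. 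The pole at $z=2$ (write $\frac{1}{(2-z)^i}=\frac{2^{-i}}{(1-z/2)^i}$) contributes, after extracting $[z^{n-m}]$, terms of the form $2^{-(n-m)}\binom{n-m+i-1}{i-1}$ times constants $C_i$, which after reindexing $i\mapsto |\gs|-1-i$ and absorbing powers of $2$ should give exactly $2^{-n}\sum_{i=0}^{|\gs|-3}(|\gs|-2-i)2^{|\gs|-i-1}\binom{n-|\gs|+i}{i}$; the coefficients $|\gs|-2-i$ come from Taylor-expanding $(2-z)^{2-m}$ about $z=2$, i.e. from $C_i=\binom{?}{?}$-type residues of $(1-z)^{-2}$ at $z=2$, which are linear in the index.

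The main obstacle is purely bookkeeping: getting the partial-fraction constants $C_i$ right and matching the resulting binomial sum — including the shift from $[z^n]$ to $[z^{n-|\gs|}]$ and the conversion between the $2^{-(n-m)}$ that falls out naturally and the $2^{-n}$ in the statement — to the precise expression with summand $(|\gs|-2-i)2^{|\gs|-i-1}\binom{n-|\gs|+i}{i}$. I expect no conceptual difficulty: the residue of $(1-z)^{-2}$ at $z=2$ and its derivatives are elementary, so each $C_i$ is an explicit small integer, and the identity $\frac{1}{(1-w)^i}=\sum_k\binom{k+i-1}{i-1}w^k$ does the rest. A sanity check at small values ($\gs=21$, $\gs=312$) against the known facts $\occ_{21}(\btau_n)=n-1$ and the direct count for $|\gs|=3$ will confirm the constants. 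Once the algebra is arranged, the theorem follows.
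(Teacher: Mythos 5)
Your proposal is correct and follows essentially the same route as the paper: divide $a_{n;\gs}=[z^n]A_\gs(z)$ by $t_n=2^{n-2}$, absorb the powers of $2$ via the rescaling $z\mapsto z/2$ to obtain $[z^n]\bigpar{z^{|\gs|}(2-z)^{2-|\gs|}(1-z)^{-2}}$, and then apply exactly the partial-fraction expansion you describe (the paper's constants are $C_j=m-j+1$ with $m=|\gs|-2$, coming from the Taylor expansion of $(1-z)^{-2}$ at $z=2$ as you indicate, and the reindexing $i=j-1$ yields the stated sum). The only blemish is the garbled intermediate identity relating $4/(1-2z)^2$ to $(2-z)^{-2}$, but your fallback of verifying $[z^n]\bigpar{z^{|\gs|}(2-z)^{2-|\gs|}(1-z)^{-2}}=2^{2-n}[z^n]A_\gs(z)$ directly is exactly the substitution the paper performs, so nothing is missing.
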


\begin{proof}
  The total number of occurences of $\gs$ in tree permutations of length $n$
  is $a_{n;\gs}$, and the number of such tree permutations is $t_n=2^{n-2}$
  by \eqref{tn}.
Hence, by \eqref{Az}--\eqref{Az1},
\begin{align}\label{te1}
    \E\ns(\btau_n) &
= \frac{a_{n;\gs}}{2^{n-2}}
=[z^n] \bigpar{2^{2-n}A_n(z)}
=[z^n]\bigpar{4 A_n(z/2)}
\notag\\&
=[z^n]\frac{z^{|\gs|}}{(2-z)^{|\gs|-2}(1-z)^2},
\end{align}
which gives the first two expressions in \eqref{te};
the explicit formula then follows from the partial fraction expansion,
with $m=|\gs|-2\ge0$,
\begin{align}\label{te2}
  \frac{1}{(2-z)^{m}(1-z)^2}
=\frac1{(1-z)^2}-\frac{m}{1-z}+\sum_{j=1}^{m}\frac{m-j+1}{(2-z)^j}
.\end{align}
\end{proof}

\section{A random tree permutation of random length}
\label{SRT}

Recall that $T(z)$ is the generating function in \eqref{Tz}, and
let, throughout the paper, $p$ be the (unique) positive root of 
\begin{align}\label{Tp}
  T(p)=1.
\end{align}
By \eqref{Tz}, this
yields $0<p<1/2$ and 
$p-p^2=1-2p$, or
$p^2-3p+1=0$, and thus 
\begin{align}\label{p}
  p=\frac{3-\sqrt{5}}2
=0.381966\dots   
.
\end{align}
Recalling the golden ration $\phi$ in \eqref{phi},
we thus have 
\begin{align}\label{p2phi}
p=\phi\qww=2-\phi.
\end{align}
We note also
\begin{align}\label{pphi+}
  1-p&=\phi-1=\phi\qw,
&
1-2p&=p(1-p)=\phi^{-3}.
\end{align}

We now define a random tree permutation $\tbtau$ to be a 
random element of $\fT_*$ with the distribution
\begin{align}\label{tbtau}
  \P(\tbtau=\tau)=p^{|\tau|},
\qquad \tau\in\fT_*.
\end{align}
Note that the sum over all $\tau\in\fT_*$ of the probabilities in
\eqref{tbtau} equals $\sum_n t_np^n=T(p)=1$,
and thus \eqref{tbtau} really defines a probability distribution.

The random tree permutation  $\tbtau$ thus has random length.
It follows from \eqref{tbtau} that the probability generating function of
  $|\tbtau|$ is
  \begin{align}\label{Gtau}
\Gtau(z):=\sumn t_np^nz^n = T(pz).
  \end{align}

\begin{lemma}\label{Ltau}
  We have
  \begin{align}
    \E|\tbtau|&
=\phi+2
=\frac{5+\sqrt5}{2}
=\sqrt5\,\phi
\doteq 3.618 
\label{Etau},
\\
  \E|\tbtau|^2&
=11\phi+8=\frac{27+11\sqrt5}2\doteq 25.798 
\label{Etau2},
\\
\Var|\tbtau|&=6\phi+3 
= 3\phi^3
=6 + 3\sqrt5 \doteq 12.708, 
\label{Vtau}\\
  \E|\tbtau|^k&<\infty, \qquad \forall k<\infty.
\label{Etauk}  \end{align}
\end{lemma}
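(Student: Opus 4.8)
The plan is to compute everything from the probability generating function $\Gtau(z)=T(pz)$ in \eqref{Gtau}, using the explicit closed form for $T$ in \eqref{Tz} together with the relations $p=\phi\qww$, $1-p=\phi\qw$ and $1-2p=\phi^{-3}$ from \eqref{p2phi}--\eqref{pphi+}. From \eqref{Tz} we have
\begin{align*}
  \Gtau(z)=T(pz)=\frac{pz-p^2z^2}{1-2pz},
\end{align*}
which is a rational function, analytic in a neighbourhood of $z=1$ since $1-2pz$ does not vanish there (as $0<p<1/2$). First I would note that $\Gtau(1)=T(p)=1$, consistent with \eqref{tbtau} being a probability distribution. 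Then $\E|\tbtau|=\Gtau'(1)$, $\E|\tbtau|(|\tbtau|-1)=\Gtau''(1)$, and $\Var|\tbtau|=\Gtau''(1)+\Gtau'(1)-\Gtau'(1)^2$; these give \eqref{Etau}, \eqref{Etau2} and \eqref{Vtau} after substituting $p=(3-\sqrt5)/2$ and simplifying. Differentiating the rational function $\Gtau$ is entirely routine; the only care needed is the arithmetic with $\sqrt5$, which is readily organized by working in terms of $\phi$ and repeatedly using $\phi^2=\phi+1$ (so $\phi^3=2\phi+1$, etc.). For instance $\Gtau'(1)$ will come out in the form $a\phi+b$ for small rationals/integers $a,b$, and one checks $a\phi+b=(5+\sqrt5)/2=\sqrt5\,\phi$.

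Alternatively, and perhaps more cleanly, I would differentiate $T$ first and then use the chain rule: from \eqref{Tz}, $T'(z)=\ddd{z}\bigpar{z+z^2(1-2z)\qw}$, and $\E|\tbtau|=p\,T'(p)$, while $\E|\tbtau|(|\tbtau|-1)=p^2T''(p)$. Using $T(p)=1$, i.e.\ $p-p^2=1-2p$, and the derived identities \eqref{pphi+} lets one replace $1-2p$ by $\phi^{-3}$ and $1-p$ by $\phi\qw$ wherever they appear in the denominators, so that all powers of $\phi$ collapse to a single linear expression $a\phi+b$. This keeps the surd-arithmetic minimal.

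For \eqref{Etauk}, the point is simply that $|\tbtau|$ has a probability generating function with radius of convergence strictly greater than $1$: $\Gtau(z)=T(pz)$ is analytic for $|pz|<1/2$, i.e.\ for $|z|<1/(2p)=1/(3-\sqrt5)=\phi^2/... >1$. Hence $\E z^{|\tbtau|}<\infty$ for some $z>1$, which implies $\E|\tbtau|^k<\infty$ for every $k$ (the exponential moment dominates every polynomial moment). Equivalently, one can observe from \eqref{tbtau} and $t_n\le 2^{n-2}$ that $\P(|\tbtau|=n)=t_np^n\le\tfrac14(2p)^n$ decays geometrically since $2p=3-\sqrt5<1$, so all moments are finite.

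I do not expect any genuine obstacle here: the whole lemma is a finite computation once one has the rational generating function $\Gtau$, and the stated numerical values ($3.618$, $25.798$, $12.708$) serve as a built-in sanity check on the algebra. The only mildly error-prone step is the simplification of the second derivative at $z=1$, where one must be careful to combine the $(1-2p)^{-2}$ and $(1-2p)^{-3}$ contributions correctly; organizing the computation via $p^2T''(p)$ with $1-2p=\phi^{-3}$ substituted at the end makes the bookkeeping transparent.
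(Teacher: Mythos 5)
Your proposal is correct and follows essentially the same route as the paper: differentiate the probability generating function $\Gtau(z)=T(pz)$ at $z=1$, simplify via the identities $p=\phi^{-2}$, $1-p=\phi^{-1}$, $1-2p=\phi^{-3}$, and deduce \eqref{Etauk} from the radius of convergence exceeding $1$ (or equivalently from the geometric decay of $t_np^n$). Both of your suggested arguments for \eqref{Etauk} are exactly the ones the paper gives.
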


\begin{proof}
By \eqref{Gtau} and straightforward calculations
using \eqref{p2phi}--\eqref{pphi+},
\begin{align}\label{et1}
  \E|\tbtau|=\Gtau'(1)=pT'(p)=\frac{p(1-2p+2p^2)}{(1-2p)^2}
=\phi^4(\phi^{-3}+2\phi^{-4})
=\phi+2
.\end{align}
Similarly,
\begin{align}\label{et2}
  \E\bigsqpar{|\tbtau|(|\tbtau|-1)}
=\Gtau''(1)=p^2T''(p) =\frac{2p^2}{(1-2p)^3}=2\phi^5
=10\phi+6
\end{align}
and thus, combining \eqref{et1} and \eqref{et2},
\begin{align}\label{et3}
  \E|\tbtau|^2
=11\phi+8
\end{align}
and
\begin{align}\label{et4}
  \Var|\tbtau|=\bigpar{11\phi+8}-(\phi+2)^2 = 6\phi+3
.\end{align}
This shows \eqref{Etau}--\eqref{Vtau}.

Finally, \eqref{Etauk} follows because $\Gtau(z)$ has radius of convergence
greater than 1. (Or directly from \eqref{tn} and \eqref{tbtau}.)
\end{proof}

\subsection{From random trees to random forests}\label{SSRT}
Recall that forest permutations are  sums of tree permutations
\eqref{tomtam}.
Let $\tbtau_1,\tbtau_2,\dots$ be an infinite sequence of independent random
tree permutations with the distribution \eqref{tbtau}, and
let 
\begin{align}
  S_m:=\sum_{i=1}^m |\tbtau_i|,
\end{align}
the total length of the $m$ first of these tree permutations.
Thus, for any $m\ge1$, $\tbtautaum$ is a forest permutation of length $S_m$,
having $m$ blocks.

Suppose that $\pi$ is a forest permutation with $m$ blocks
$\tau_1,\dots,\tau_m$. Then, by \eqref{tbtau},
\begin{align}\label{p1}
  \P\bigpar{\tbtautaum=\pi}
=\P\bigpar{\tbtau_i=\tau_i,\forall i\le m}
=\prod_{i=1}^m\P\bigpar{\tbtau_i=\tau_i}
=\prod_{i=1}^m p^{|\tau_i|}
=p^{|\pi|}.
\end{align}
Note that this depends only on $|\pi|$.

In order to obtain arbitrary forest permutations, we have to consider a random
number of blocks. We use a renewal theoretic approach.
For any $n\ge1$, 
let, as in \eqref{Nx},
\begin{align}\label{Nn}
  N(n):=\min \set{m\ge1: S_m\ge n}.
\end{align}
Then, $S_{N(n)}\ge n$. 
Moreover, if $\pi\in\fF_n$ has $m$ blocks $\pi_1,\dots,\pi_m$, then
$\tbtautaum=\pi$ entails $S_m=|\pi|=n$, and thus
$N(n)=m$.
Hence, using also \eqref{p1},
\begin{align}\label{piN}
  \P\bigpar{\tbtautauN=\pi}
&=\P\bigpar{N(n)=m\;\&\; \tbtautaum=\pi}
\notag\\&
=\P\bigpar{\tbtautaum=\pi}
=p^{|\pi|}=p^n.
\end{align}
This probability is thus the same for all $\pi\in\fF_n$.
Consequently, conditioned on $S_{N(n)}=n$, so that
$\tbtautauN\in\fF_n$, \eqref{piN} implies that
$\tbtautauN$ has the uniform distribution in $\fF_n$, and thus
\begin{align}\label{pint}
  \bpi_n \eqd \bigpar{\tbtautauN\mid S_{N(n)}=n}.
\end{align}
In words, we can construct a uniformly random $\bpi_n\in\fF_n$
from the infinite sequence $(\tbtau_i)$ by composing $\tbtau_1,\tbtau_2,\dots$ 
until their total length is at least $n$, and then condition on the
total length being exactly $n$.

\section{Trees in a random tree permutation
$\tbtau$}

%

The construction \eqref{pint} suggests that it is useful to study the random
variable 
$\ns(\tbtau)$, for a given  permutation $\gs$.
We do this first for a tree permutation
$\gs$. 

\begin{lemma}\label{L8}
  Let $\gs$ be a tree permutation, 
and let $\tbtau$ be random with the distribution \eqref{tbtau}.
Then,
\begin{align}
  &\mu_\gs:=\E[\ns(\tbtau)]
=
    \begin{cases}
\E|\tbtau|=\phi+2,
&|\gs|=1
,\\
      p^{|\gs|}(1-p)^{-|\gs|}\bigpar{\frac{1-p}{1-2p}}^2
= 
p^{|\gs|/2-2}
=\phi^{4-|\gs|},
&|\gs|\ge2
.    \end{cases}
\label{l81}
\\&\E[\ns(\tbtau)^k] <\infty, \qquad \forall k\ge1\label{l8k}
.\end{align}
\end{lemma}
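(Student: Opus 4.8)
The plan is to compute $\mu_\gs=\E[\ns(\tbtau)]$ by conditioning on $|\tbtau|$ and using the exact generating‑function identity of \refL{LAz}. First consider the trivial case $|\gs|=1$: then $\occ_1(\tau)=|\tau|$ for every $\tau$, so $\mu_\gs=\E|\tbtau|=\phi+2$ by \eqref{Etau}, and all moments are finite by \eqref{Etauk}. For $|\gs|\ge2$, I would write
\begin{align}\label{l8pf}
\mu_\gs
=\E[\ns(\tbtau)]
=\sum_{\tau\in\fT_*} \ns(\tau)\,\P(\tbtau=\tau)
=\sum_{n\ge|\gs|} p^{n}\!\!\sum_{|\tau|=n}\ns(\tau)
=\sum_{n\ge|\gs|} a_{n;\gs}\,p^{n}
=A_\gs(p),
\end{align}
using \eqref{tbtau}, the definition of $a_{n;\gs}$ as the total number of marked occurrences of $\gs$ over all tree permutations of length $n$, and the definition \eqref{Az} of the generating function $A_\gs$. (The interchange of summations is justified since all terms are nonnegative.) Now plug $z=p$ into the closed form \eqref{Az1}: since $0<p<1/2$, the radius of convergence $1/2$ of $A_\gs$ is not reached, so $A_\gs(p)$ is finite and equals
\begin{align}\label{l8pf2}
A_\gs(p)=\frac{p^{|\gs|}}{(1-p)^{|\gs|-2}(1-2p)^{2}}
=p^{|\gs|}(1-p)^{-|\gs|}\Bigpar{\tfrac{1-p}{1-2p}}^{2},
\end{align}
which is the first expression claimed in \eqref{l81}. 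The remaining equalities are the routine substitutions $p=\phi\qww$, $1-p=\phi\qw$, $1-2p=\phi^{-3}$ from \eqref{p2phi}--\eqref{pphi+}: these give $p^{|\gs|}(1-p)^{-|\gs|}=\phi^{-2|\gs|}\phi^{|\gs|}=\phi^{-|\gs|}$ and $\bigpar{(1-p)/(1-2p)}^2=\bigpar{\phi^{-1}\phi^{3}}^2=\phi^{4}$, hence $A_\gs(p)=\phi^{4-|\gs|}$; noting $\phi^{-|\gs|}=(\phi^2)^{-|\gs|/2}=(\phi+1)^{-|\gs|/2}$ and $p=\phi\qww$ one also gets the form $p^{|\gs|/2-2}$.

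For the moment bound \eqref{l8k}, I would use the deterministic estimate $\ns(\tau)\le\binom{|\tau|}{|\gs|}\le |\tau|^{|\gs|}$ (from \eqref{11}, or trivially), so that $\E[\ns(\tbtau)^k]\le\E|\tbtau|^{k|\gs|}<\infty$ for every $k$ by \eqref{Etauk}. No obstacle is anticipated here; the only point requiring any care is the justification of \eqref{l8pf}, i.e.\ recognizing that summing $\ns(\tau)p^{|\tau|}$ over tree permutations exactly reproduces the counting generating function $A_\gs$ evaluated at $p$ — and this is immediate from the way $a_{n;\gs}$ was defined in \refL{LAz}. So the "hard part" is essentially bookkeeping: making sure the definition of a marked occurrence used in $a_{n;\gs}$ matches $\ns(\tau)$ (it does, since a marked occurrence is precisely a choice of occurrence), after which the result is a one‑line evaluation of \eqref{Az1} at $z=p$ followed by golden‑ratio algebra.
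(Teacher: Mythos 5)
Your proposal is correct and follows essentially the same route as the paper: the case split on $|\gs|=1$, the identification $\E[\ns(\tbtau)]=\sum_n a_{n;\gs}p^n=A_\gs(p)$ followed by evaluation of \eqref{Az1} at $z=p$ with the golden-ratio substitutions, and finiteness of moments via \eqref{Etauk}. (In fact your bound $\ns(\tau)\le\binom{|\tau|}{|\gs|}\le|\tau|^{|\gs|}$ for \eqref{l8k} is the careful version of the paper's stated bound $\ns(\tau)\le|\tau|$, which is not literally true for general tree patterns but is inessential to the argument.)
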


\begin{proof}
First, if $|\gs|=1$, i.e., $\gs=1$, 
then trivially $\occ_\gs(\tau)=|\tau|$ for any permutation $\tau$, 
and thus this case of \eqref{l81} follows from \refL{Ltau}.

Assume now $|\gs|\ge2$, and 
let $a_{n;\gs}$ and $A_\gs(z)$ be as in \refL{LAz}.
Then, 
\begin{align}
\sum_{\tau\in\fT_n}\ns(\tau)=a_{n;\gs},
\end{align}
and thus it follows from \eqref{tbtau} that
\begin{align}\label{Ap}
  \E \ns(\tbtau) =
\sum_{\tau\in\fT_*} \ns(\tau) p^{|\tau|}
=\sum_{n\ge|\gs|} p^n \sum_{\tau\in\fT_n}\ns(\tau)
=\sum_{n\ge|\gs|} p^na_{n;\gs}
=A_\gs(p).
\end{align}
Consequently, \eqref{l81} follows from \eqref{Ap} och \eqref{Az2},
using 
\eqref{pphi+}.

Finally, \eqref{l8k} follows from \eqref{Etauk}, since $\ns(\tau)\le|\tau|$
for any $\gs$. 
\end{proof}
  
\begin{example}\label{E21}
  The only tree permutation $\gs$ with $|\gs|=2$ is 21, and $\occ_{21}(\tau)$
  counts the number of inversions in $\tau$, i.e., the number of edges in
  $G_\tau$. If $\tau$ is a tree permutation, we thus have
  $\occ_{21}(\tau)=|\tau|-1$. Indeed, \refL{L8} yields
$\E\occ_{21}(\tbtau)=\phi^2$, which equals  
$\E\bigsqpar{|\tbtau|-1}=\E|\tbtau|-1=\phi+1$
given by \refL{Ltau}.
\end{example}

\section{Patterns in a random forest permutation}
\label{SpfF}

We are now prepared to prove \refTs{TF} and \ref{TFI} on 
patterns in $\pi_n$.

\begin{proof}[Proof of \refT{TF}]
  Let $\pi\in\fS_n$ have block decomposition
$\pi=\pi_1\opluss\pi_\nuN$.
If $\gs=\gs_1\opluss\gs_d$ occurs as a pattern in $\pi$, then each block
$\gs_j$ 
is mapped into some block $\pi_{i_j}$, but it is possible that several
blocks of $\gs$ fit in the same block of $\pi$.
Let $\occ'_\gs(\pi)$ be the number of occurrences of $\gs$ such that the blocks
are mapped to different blocks in $\pi$, i.e., where the function $j\mapsto
i_j$ is injective, and let $\occ''_\gs(\pi)$ denote the number
of the remaining occurrences.

Let us first consider $\occ'_\gs$, which will be the main term.
We have
\begin{align}
  \occ'_\gs(\pi)=\sum_{1\le i_1<\dots<i_d\le \nuN}\prod_{j=1}^d\occ_{\gs_j}(\pi_{i_j}).
\end{align}
Thus, by \eqref{pint},
\begin{align}
  \occ'_\gs(\bpi_n)
\eqd\Bigpar{\sum_{1\le i_1<\dots<i_d\le N(n)}\prod_{j=1}^d\occ_{\gs_j}(\tbtau_{i_j})
\Bigm| S_{N(n)}=n}
.\end{align}
This is a conditioned \Ustat{} of the type in \refP{PUN2},
based on the \iid{} sequence $X_i:=\tbtau_i$, with 
$\cS=\fSx$, the (discrete) space of all permutations, and
$h(\tau):=|\tau|$;
more precisely,
we then have
$\occ'_\gs(\bpi_n)\eqd
\bigpar{U_{N(n)}(f)\mid S_{N(n)}=n}$
with 
\begin{align}\label{f10}
f\bigpar{\tau_1,\dots,\tau_d}:=  \prod_{j=1}^d\occ_{\gs_j}(\tau_{j}).
\end{align}
Note that \eqref{l8k} and \Holder's inequality imply that
$\E \bigsqpar{\bigabs{f\xpar{\tbtau_1,\dots,\tbtau_d}}^p}<\infty$ for
every $p<\infty$.
Similarly, $\E\bigsqpar{h(\tbtau_1)^p}<\infty$ by \eqref{Etauk}.

It follows from \refP{PUN2}
that \eqref{tf1} holds for $\occ'_\gs$,
with 
some $\tmu_\gs$ and $\gamm_\gs$;
note that in the notation of \refS{SSU}, 
by \refL{Ltau},
\begin{align}\label{fnu}
  \nu:=\E h(\tbtau)=\E|\tbtau|=\phi+2
,\end{align}
and
by \eqref{Umu}, \eqref{f10}, the independence of $\tbtau_i$, 
and \eqref{l81} in \refL{L8},
\begin{align}\label{mugs}
\mu=
\mu_\gs&:=\prod_{j=1}^d\E\bigsqpar{\occ_{\gs_j}(\tbtau_{j})}
=\prod_{j=1}^d\mu_{\gs_j}
=(\phi +2)^{\gl}\phi^{4(d-\gl)-(|\gs|-\gl)}
.\end{align}
 Thus, by \eqref{cvtau}, $\tmu_\gs$ in \eqref{tf1} 
(so far for $\ns'$)
is given by
\begin{align}\label{tmugs2}
\tmu_{\gs}&
=\frac{\mu_\gs}{ \nu^{d}d!}
=\frac{\mu_{\gs}}{(\phi+2)^{d}d!}
=\frac{1}{d!}(\phi+2)^{\gl-d}\phi^{4d-3\gl-|\gs|}
,\end{align}
which yields \eqref{tmugs}.

Similarly, by \eqref{fi},
\begin{align}\label{f12}
  f_i(\tau)&= \occ_{\gs_i}(\tau)\prod_{j\neq i}\E\occ_{\gs_j}(\tbtau_{j})
=\prod_{j\neq i}\mu_{\gs_j} \cdot \occ_{\gs_i}(\tau)
=\frac{\mu_\gs}{\mu_{\gs_i}}\occ_{\gs_i}(\tau).
\end{align}
Suppose that $|\gs_i|>1$.
We may have, with positive probabilities, 
\begin{enumerate}
\item 
 $|\tbtau|=1$, and then 
$\occ_{\gs_i}(\tbtau)=0$, 
\item $\tbtau=\gs_i$, and then $\occ_{\gs_i}(\tbtau)=1>0$.
\end{enumerate}
Thus it is impossible to have
$f_i(\tbtau)=c|\tbtau|$ \as, for any real $c$.
Consequently, \refP{PUN} yields 
 $\gamm_\gs>0$ if any block $\gs_i$ with $|\gs_i|>1$ exists.

It remains to show that $\occ''_{\gs}(\bpi_n)$ is negligible.
By grouping the blocks of $\gs$ that are mapped into the same block of
$\pi$,
we see that $\occ''_{\gs}(\pi)$ can be written as a sum over all
decompositions $\gs=\tgs_1\opluss\tgs_k$ with $k<d$,
of the number of occurrences with each $\tgs_i$ mapped into a block of
$\pi$, with these blocks distinct.
(Here $\tgs_i$ are
necessarily forest permutations.)
It follows, using again \eqref{pint}, and $N(n)\le n$, that
\begin{align}
  \E\occ''_{\gs}(\bpi_n)
&\le \frac{1}{\P(S_{N(n)=n})}\E
  \occ''_{\gs}\bigpar{\tbtau_1\opluss\tbtau_{N(n)}}
\le C\E \occ''_{\gs}\bigpar{\tbtau_1\opluss\tbtau_{n}}
\notag\\&
=C\sum_{k=1}^{d-1}\sum_{\tgs_1,\dots,\tgs_k}\sum_{1\le i_1<\dots<i_k\le n}
\E\prod_{j=1}^{k} \occ_{\tgs_j}(\tbtau_{i_j}).
\end{align}
The number of terms in the  multiple sum is $O\bigpar{n^{d-1}}$, and
each term is $O(1)$, using independence, 
the trivial  $\occ_{\tgs_j}(\tbtau)\le |\tbtau|^{|\gs_j|}$,
 and \eqref{Etauk}.
Hence, $\E\occ''_{\gs}(\bpi_n)=O\bigpar{n^{d-1}}$,
and \eqref{tf1} follows from the result for $\occ'_\gs(\bpi_n)$.

Moment convergence follows in the same way,
using \refP{PUNmom} and Minkowski's inequality; we omit the details.
\end{proof}

\begin{proof}[Proof of \refT{TFI}]
We have the trivial identity
\begin{align}\label{triv}
  \sum_{\gs\in\fS_d}\occ_{\gs}(\bpi_n)=\binom nd
.\end{align}
Furthermore, we only have to consider forest permutations $\gs\in\fF_d$ in
\eqref{triv}, since otherwise $\occ_{\gs}(\bpi_n)=0$.

Let $\gs\in\fF_d$, and let $d'$ be its number of blocks.
If $\gs\neq\iota_d$, then $d'<d$. 
If $d'\le d-2$, then \eqref{tf1} implies that
$\occ_{\gs}(\bpi_n)/n^{d-3/2}\pto0$, so such terms can be ignored.

The remaining terms in \eqref{triv} have $d'=d-1$, and thus 1 block of
length 2 and $d-2$ blocks of length 1. There are $d-1$ such
permutations; for example, if $d=4$, they are 2134, 1324 and 1243.
For each such $\gs$, we have by
\eqref{tmugs} 
\begin{align}
  \tmu_\gs=\frac{1}{(d-1)!}(\phi+2)\qw\phi^{4(d-1)-3(d-2)-d}
=\frac{1}{(d-1)!}(\phi+2)\qw\phi^{2}
,\end{align}
where,
see \eqref{Etau}, 
\begin{align}\label{ellen}
(\phi+2)\qw \phi^2=\frac{\phi^2}{\sqrt 5 \phi}
=\frac{\phi}{\sqrt 5}=\frac{5+\sqrt5}{10}.
\end{align}
Hence, 
\refT{TF} yields
\begin{align}\label{cf5}
  \frac{ \occ_{\gs}(\bpi_n)-\tmuinv(d-1)!\qw n^{d-1}}{n^{d-3/2}}
\dto \N\bigpar{0,\gamm_{\gs}},
\end{align}
Moreover, the proof of \refT{TF} applies also to the
sum $\sumxi_{\gs} \occ_{\gs}$
over these $d-1$ permutations  $\gs$.
(Consider the sum of the corresponding functions \eqref{f10}. 
See also \refR{RCW}.)
Thus,
\begin{align}\label{cf7}
 \frac{\sumxi_{\gs} \occ_{\gs}(\bpi_n)-\tmuinv(d-2)!\qw n^{d-1}}
{n^{d-3/2}}
\dto \N\bigpar{0,\gamm},
\end{align}
where $\gamm>0$ by the argument in the proof of \refT{TF}.

As said above, we may add all $\gs\in\fF_k$ with less than $d-1$ blocks 
to the sum in \eqref{cf7} without changing the limit.
The resulting sum is, by \eqref{triv},
\begin{align}
  \sum_{\gs\in\fF_d\setminus\set{\iota_d}}\occ_{\gs}(\bpi_n)
=\binom{n}{d} -\occ_{\iota_d}(\bpi_n),
\end{align}
and thus \eqref{cf1} follows, with $\gamm_{\iota_d}=\gamm$ in \eqref{cf7},

Moment convergence follows by the same argument.
\end{proof}

\begin{remark}\label{RFVar}
  The asymptotic variance $\gamm_\gs$ can by \eqref{cvtau2} and \eqref{Ugss}
  be computed from variances and covariances of the
$\occ_{\gs_i}(\tbtau)$ and $|\tbtau|$. 
(See also \refR{R1} when $\gs$ is a tree permutation, so $d=1$.)
We do not know any general formula,
but at least for a specific $\gs$, it should be possible to calculate these
using methods similar to those in the proof of \refLs{L8} and \ref{LAz}.
\end{remark}

\begin{example}\label{E21f}
  Consider the simplest example $\gs=21$, where we count the number of
  inversions in a random forest permutation $\bpi_n$.
In this case, $\gs$ is indecomposable, so  $d=1$.
Furthermore,  
by \eqref{f10} and \refE{E21},
\begin{align}\label{e21fa}
  f(\tau)=\occ_{21}(\tau)=|\tau|-1=h(\tau)-1,
\end{align}
and thus, using also \eqref{fnu},
\begin{align}\label{e21fgs}
\mu_{21}=\E f(\tbtau)=\nu-1=\phi+1=\phi^2,  
\end{align}
in agreement with \eqref{mugs}.
Hence, by \eqref{tmugs2} (or \eqref{tmugs}) and \eqref{ellen}, 
\begin{align}\label{x21}
\tmu_{21}=\frac{\mu_{21}}{\nu}=\frac{\phi^2}{\phi+2}
=\frac{5+\sqrt5}{10}.
\end{align}
Moreover,
\eqref{rov} yields, using also \eqref{Vtau} and \eqref{Etau},
\begin{align}\label{e21ff}
  \gamm_{21}
=\frac{1}{\nu}\Var\Bigsqpar{|\tbtau|-1-\frac{\nu-1}{\nu}|\tbtau|}
=\nu^{-3}\Var{|\tbtau|}
=\frac{3\phi^3}{(\sqrt5\phi)^3}
= 3\cdot 5^{-3/2}
\doteq 0.268.
\end{align}
Consequently,
\refT{TF} yields
\begin{align}\label{e21fc}
  \frac{\occ_{21}(\bpi_n)-\frac{5+\sqrt5}{10} n}{n\qq}
\dto \N\bigpar{0,3\cdot 5^{-3/2}}.
\end{align}

This implies also that for the case $d=2$ of \refT{TFI}, we have
$\gamm_{12}=\gamm_{21}=3\cdot5^{-3/2}$.
\end{example}

Note that $\occ_{21}(\bpi_n)$ equals the number of edges in 
the forest $G_{\bpi_n}$, and thus
$n-\occ_{21}(\bpi_n)$ is the number of components of $G_{\bpi_n}$, which
equals the number of blocks in $\bpi_n$.
Hence, \refE{E21f} implies
a central limit theorem for the number of blocks in a
random forest permutation: 

\begin{theorem}\label{Tblocks}
Let $\gbt(\bpi_n)$ be  number of blocks
in a random forest permutation $\bpi_n$,
i.e., the number of tree permutations in a decomposition \eqref{tomtam} of
$\bpi_n$.  
Then
\begin{align}\label{e21fd}
  \frac{\gbt(\bpi_n)-\frac{5-\sqrt5}{10} n}{n\qq}
\dto \N\bigpar{0,3\cdot 5^{-3/2}},
\end{align}
with convergence of all moments.
\qed
\end{theorem}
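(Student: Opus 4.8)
The plan is to obtain this as an immediate consequence of \refE{E21f}, via the deterministic identity relating blocks to inversions that is recalled just above the statement. First I would note that the permutation graph $G_{\bpi_n}$ is a forest on the $n$ vertices $[n]$ whose edge set is exactly the set of inversions of $\bpi_n$, hence it has $\occ_{21}(\bpi_n)$ edges; since a forest with $v$ vertices and $e$ edges has $v-e$ components, and the components of $G_{\bpi_n}$ are precisely the blocks of $\bpi_n$ (see \refSS{SSblocks}), this gives the deterministic identity
\begin{align}\label{TbxId}
  \gbt(\bpi_n)=n-\occ_{21}(\bpi_n).
\end{align}

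Next I would substitute \eqref{TbxId} into the central limit theorem \eqref{e21fc} for the number of inversions. Using the arithmetic identity $1-\frac{5-\sqrt5}{10}=\frac{5+\sqrt5}{10}$, one has
\begin{align}
  \frac{\gbt(\bpi_n)-\frac{5-\sqrt5}{10}n}{n\qq}
  =-\,\frac{\occ_{21}(\bpi_n)-\frac{5+\sqrt5}{10}n}{n\qq},
\end{align}
so by \eqref{e21fc} the left-hand side converges in distribution to the negative of an $\N\bigpar{0,3\cdot5^{-3/2}}$ variable, which is again $\N\bigpar{0,3\cdot5^{-3/2}}$ since the centred normal law is symmetric; in particular the asymptotic variance is unchanged under the affine map $x\mapsto n-x$. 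For the moments, \refE{E21f} (through \refT{TF}, whose proof invokes \refP{PUNmom}) already provides convergence of all moments for $\occ_{21}(\bpi_n)$, and this property is preserved under the affine relation \eqref{TbxId}.

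There is essentially no obstacle here: everything substantive is contained in \refE{E21f}, and the only points requiring minimal care are the bookkeeping of the centring constant and the observation that replacing a variable by a constant minus that variable leaves the limiting variance $3\cdot5^{-3/2}$ intact.
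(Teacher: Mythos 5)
Your proof is correct and is essentially the paper's own argument: the identity $\gbt(\bpi_n)=n-\occ_{21}(\bpi_n)$ (blocks = components of the forest $G_{\bpi_n}$) combined with the CLT \eqref{e21fc} from \refE{E21f}, using the symmetry of the centred normal law. Nothing to add.
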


\section{Random tree permutations from random blocks}
\label{SRTB}
In the remaining sections, we study patterns in a random tree permutation
$\btau_n$.
In analogy with the construction of $\bpi_n$ from random tree permutations
$\tbtau_i$ in \refS{SRT}, we may construct the random tree permutation
$\btau_n$ with given length from a code with  blocks of random lengths.
There is only one $\sfL$-block or $\sfR$-block of each length, and 
therefore (\cf{} \eqref{tbtau}) we simply let
$(L_i)\xoo$ and $(R_i)\xoo$ 
be  two infinite sequences 
of random variables, all \iid, with
the geometric distribution
\begin{align}
  \label{LR}
\P(L_i=\ell)=\P(R_i=\ell)=2^{-\ell},
\qquad \ell\ge1.
\end{align}
We also define the random vector
\begin{align}\label{XLR}
X_i:=(L_i,R_i),   
\end{align}
and, for a vector $x=(\ell,r)$,
\begin{align}\label{hlr}
  h(x):=\ell+r.
\end{align}
We use the notation of \refS{SSU}; in particular,
\begin{align}\label{SLR}
  S_m:=\sumim h(X_i)=\sumim(L_i+R_i).
\end{align}

For $m\ge1$, let $\btaux_m$ be the random tree permution that has a code with
$2m$ blocks of lengths $L_1,R_1,\dots,L_m,R_m$, and thus (random) length
$S_m$.
Then, for every tree permutation $\tau$ having a code $\gO_\tau$ with $2m$
blocks with lengths $\ell_1,r_1,\dots,\ell_m,r_m$,
by independence and \eqref{LR},
\begin{align}\label{btaux}
  \P\bigpar{\btaux_m=\tau}
&=
\P\bigpar{L_1=\ell_1,R_1=r_1,\dots,L_m=\ell_m,R_m=r_m}
\notag\\&
=\prodim \P(L_1=\ell_i)\P(R_i=r_i)
=\prodim 2^{-\ell_i}2^{-r_i}
=2^{-|\tau|}.
\end{align}
It follows as in \refSS{SSRT}, \cf{} \eqref{piN}--\eqref{pint}, that
if $\tau$ is a tree permutation of length $n$ that has $2m$ blocks in its code,
then
\begin{align}
  \P\bigpar{\btaux_{N(n)}=\tau}
=\P\bigpar{N(n)=m \;\&\; \btaux_m=\tau}
=\P\bigpar{\btaux_m=\tau}
=2^{-|\tau|},
\end{align}
which is the same for all $\tau\in\fT_n$, and thus
\begin{align}\label{tnt}
  \btau_n\eqd\bigpar{\btaux_{N(n)}\mid S_{N(n)}=n}.
\end{align}

Note that $X_{N(n)}$ does not have the same distribution as $X_i$ for a
fixed $i$, see \eg{} \cite[Section 2.6]{Gut-SRW}.
We will use a simple (coarse) estimate (valid for much more general $X_i$ and
$h(X_i)$). Define for convenience $h(X_i):=0$ for $i\le0$.
\begin{lemma}\label{Lren}
For any $j\ge0$, $k\ge1$ and $n\ge1$,
\begin{align}\label{lren}
  \P\bigsqpar{h(X_{N(n)-j})=k}
\le\bigpar{k+j\E h(X_1)}\P\bigpar{h(X_{1})=k}.
\end{align}
Hence, for any $q>0$,
\begin{align}\label{lren2}
  \E\bigsqpar{h(X_{N(n)-j})^q}
\le \E\bigsqpar{h(X_1)^{q+1}}+j\E h(X_1) \E\bigsqpar{h(X_1)^q}
.\end{align}
\end{lemma}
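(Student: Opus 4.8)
The plan is to establish the pointwise bound \eqref{lren} first; \eqref{lren2} then follows immediately, since $h(X_{N(n)-j})$ is a non-negative integer (equal to $0$ when $N(n)-j\le0$, and $0^q=0$ for $q>0$), so that $\E\bigsqpar{h(X_{N(n)-j})^q}=\sum_{k\ge1}k^q\,\P\bigsqpar{h(X_{N(n)-j})=k}$, and multiplying \eqref{lren} by $k^q$ and summing over $k\ge1$ produces exactly $\E\bigsqpar{h(X_1)^{q+1}}+j\,\E h(X_1)\,\E\bigsqpar{h(X_1)^q}$.

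For \eqref{lren} I would split according to the value of $N(n)-j$. Because $h(X_i)=0$ for $i\le0$ while $k\ge1$, only indices $m:=N(n)-j\ge1$ contribute, so it suffices to bound $\P\bigsqpar{N(n)=m+j,\ h(X_m)=k}$ and sum over $m\ge1$. The key point is to replace the exact renewal event $\set{N(n)=m+j}=\set{S_{m+j-1}<n\le S_{m+j}}$ by a cruder sandwich that decouples the relevant variables: since $h\ge0$ we have $S_{m-1}\le S_{m+j-1}<n$, and writing $Y_m:=\sum_{i=1}^{j}h(X_{m+i})$, so that $S_{m+j}=S_{m-1}+h(X_m)+Y_m$, the inequality $S_{m+j}\ge n$ becomes, on $\set{h(X_m)=k}$, simply $S_{m-1}\ge n-k-Y_m$. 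Hence $\P\bigsqpar{N(n)=m+j,\ h(X_m)=k}\le\P\bigsqpar{n-k-Y_m\le S_{m-1}<n,\ h(X_m)=k}$. The gain is that $S_{m-1}$, $h(X_m)$ and $Y_m$ now depend on disjoint blocks of the i.i.d.\ sequence, hence are independent, with $h(X_m)\eqd h(X_1)$ and $\E Y_m=j\,\E h(X_1)$.

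Summing this over $m\ge1$, reindexing $\ell=m-1$, and using independence, the right-hand side factors as $\P\bigsqpar{h(X_1)=k}$ times $\E\bigsqpar{\#\set{\ell\ge0:\ S_\ell\in[n-k-Y_m,\,n)}}$, where the outer expectation is also over the independent copy $Y_m$ and the interchange of sum and expectation is justified by non-negativity (Tonelli). The last step — the only one that is an argument rather than bookkeeping — uses that $h(X_i)\ge1$ for $i\ge1$ (indeed $h(X_i)=L_i+R_i\ge2$ here), so that $(S_\ell)_{\ell\ge0}$ is strictly increasing with integer values; consequently at most $k+Y_m$ of the $S_\ell$ can fall in the integer-length window $[n-k-Y_m,n)$, and taking expectations over $Y_m$ gives the bound $(k+\E Y_m)\,\P\bigsqpar{h(X_1)=k}=(k+j\,\E h(X_1))\,\P\bigsqpar{h(X_1)=k}$, which is \eqref{lren}.

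The main (and essentially only) obstacle is this concluding step: it genuinely relies on strict monotonicity of the partial sums, equivalently on $h(X_i)\ge1$. If $h$ were merely non-negative, the number of $S_\ell$ in a window would have to be controlled by the renewal function rather than by the window length, and the clean inequality can fail when $\P[h=0]$ is large; in the present setting positivity is automatic, so no extra hypothesis is required. Everything else — the decomposition over $m$, the independence of the three blocks of variables, the reindexing, and the interchange of summation and expectation — is routine.
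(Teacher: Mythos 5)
Your argument is correct and is essentially the paper's proof: the paper likewise decomposes over the index $m=N(n)-j-1$, relaxes the renewal event so that $S_m$, $h(X_{m+1})$ and the following $j$ increments decouple by independence, and then uses the fact that the integer partial sums are distinct — in the paper this appears as $\sum_m\P(S_m=i)\le1$ followed by $\sum_{i<n}\P(k+Z_1\ge n-i)\le\E(k+Z_1)$, which is exactly your ``at most $k+Y$ renewals in a window of length $k+Y$'' bound in a different bookkeeping. Your observation that positivity (here $h=L+R\ge2$) is what makes this step work is accurate and is implicitly used in the paper as well.
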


\begin{proof}
Write $Y_i:=h(X_i)$ and $Z_i:=\sum_{s=1}^j Y_{i+s}$.
  If $Y_{N(n)-j}=k$, 
then there exists some $m\ge0$ (viz.\ $N(n)-j-1$) such that
$S_m<n$, $Y_{m+1}=k$, and $S_m+Y_{m+1}+Z_{m+1}\ge n$.
For a given $m$, $S_m$, $Y_{m+1}$ and $Z_{m+1}$ are independent, and thus
\begin{align}\label{lren1}
  \P\bigpar{Y_{N(n)-j}=k}
&\le \summo\sum_{i=0}^{n-1} \P\bigpar{S_m=i, \, Y_{m+1}=k, \, k+Z_{m+1}\ge  n-i}
\notag\\&
= \summo\sum_{i=0}^{n-1} \P\bigpar{S_m=i} \P\bigpar{Y_{m+1}=k}
\P\bigpar{k+Z_{m+1}\ge n-i}
\notag\\&
= \P\bigpar{Y_{1}=k}\sum_{i=0}^{n-1}\summo \P\bigpar{S_m=i} 
\P\bigpar{k+Z_{1}\ge n-i}
\notag\\&
\le\P\bigpar{Y_{1}=k} \sum_{i=0}^{n-1}\P\bigpar{k+Z_{1}\ge n-i}
\notag\\&
\le\P\bigpar{Y_{1}=k} \sum_{s=1}^{\infty}\P\bigpar{k+Z_{1}\ge s}
=\P\bigpar{Y_{1}=k} \E\bigpar{k+Z_1}
\notag\\&
=(k+j\E Y_1)\P\bigpar{Y_{1}=k}.
\end{align}
This proves \eqref{lren}.
We obtain \eqref{lren2} by
multiplying \eqref{lren} by $k^q$ and summing over $k$.
\end{proof}

We record a simple fact.
\begin{lemma}\label{Lh}
  We have $\E L_i=\E R_i=2$, and thus
  \begin{align}\label{nu4}
    \nu:=\E h(X_i)=4.
  \end{align}
\end{lemma}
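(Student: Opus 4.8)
The plan is to compute $\E L_i$ directly from the geometric distribution \eqref{LR}, since $L_i$ is supported on $\set{1,2,\dots}$ with $\P(L_i=\ell)=2^{-\ell}$. First I would recall the elementary power-series identity $\sum_{\ell\ge1}\ell x^\ell = x/(1-x)^2$, valid for $|x|<1$, and specialize to $x=\tfrac12$, which gives $\sum_{\ell\ge1}\ell 2^{-\ell} = (1/2)/(1/4) = 2$. Hence $\E L_i=2$, and by \eqref{LR} the same computation gives $\E R_i=2$.

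Then, since $X_i=(L_i,R_i)$ by \eqref{XLR} and $h(x)=\ell+r$ for $x=(\ell,r)$ by \eqref{hlr}, linearity of expectation yields $\nu:=\E h(X_i)=\E L_i+\E R_i=2+2=4$, which is \eqref{nu4}.

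There is no real obstacle here; the only thing to be slightly careful about is that the geometric law in \eqref{LR} starts at $\ell=1$ (not $\ell=0$), so one should use the identity in the form $\sum_{\ell\ge1}\ell x^\ell$ rather than a variant starting from $\ell=0$. One could alternatively note that $L_i-1$ has the standard geometric distribution with success probability $\tfrac12$ on $\set{0,1,\dots}$, whose mean is $(1-\tfrac12)/\tfrac12=1$, so $\E L_i=2$; either route is a one-line computation.
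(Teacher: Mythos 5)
Your proof is correct and matches the paper's: the paper simply cites the well-known mean of the $\Ge(1/2)$ distribution on $\{1,2,\dots\}$ and then adds $\E L_i+\E R_i$, whereas you spell out the one-line power-series computation $\sum_{\ell\ge1}\ell\,2^{-\ell}=2$. No issues.
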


\begin{proof}
By definition, $L_i\eqd R_i\sim\Ge(1/2)$, and thus, as is well known, 
$\E L_i=\E R_i=2$. (See also \eqref{lx2c} below.)
Hence,   \eqref{nu4} follows.
\end{proof}

\section{Trees in a given tree permutation}

We next 
express the number of occurences of a pattern $\gs$ in a tree
permutation using codes and block lengths.
We 
consider here only the case when $\gs$ is a tree permutation.

\begin{lemma}
  \label{LTT}
Let $\gs$ be a tree permutation with $|\gs|\ge3$ having a code with $2b$ blocks
of lengths 
$\ell_1,r_1,\dots,\ell_b,r_b$,
and let
$\tau$ be a tree permutation with $|\gt|\ge3$ having a code with $2m$
blocks of lengths
$\ell'_1,r'_1,\dots,\ell'_m,r'_m$.
Then
\begin{align}\label{ltt}
  \ns(\tau)=\sum_{s=0}^{m-b}\prod_{i=1}^{b}
\ga_{\sfL,i}\bigpar{\tell'_{i+s}}
\ga_{\sfR,i}\bigpar{\tr'_{i+s}}
\end{align}
where
\begin{align}
  \tell'_k&:=\ell'_k-\indic{k=1, \ell_1>1},\label{tell}
\\
\tr'_k&:=r'_k-\indic{k=m, r_b>1},\label{tr}
\intertext{and}
\label{lttl}
 \ga_{\sfL,i}(\ell')&:=\binom{\ell'-1+\indic{i=1,\ell_1>1}+\indic{i=b,r_b=1}}
                           {\ell_i-1+\indic{i=b, r_b=1}},
\\\label{lttr}
 \ga_{\sfR,i}(r')&:=\binom{r'-1+\indic{i=b,r_b>1}+\indic{i=1,\ell_1=1}}
                           {r_i-1+\indic{i=1, \ell_1=1}}.
\end{align}
\end{lemma}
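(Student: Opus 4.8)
The plan is to use \refL{LC}: by that lemma, $\ns(\tau)$ equals the number of black--red colourings of the \emph{fixed} string $\gO_\tau=\sfL^{\ell'_1}\sfR^{r'_1}\cdots\sfL^{\ell'_m}\sfR^{r'_m}$ that can be produced from the red string $\gO_\gs=\sfL^{\ell_1}\sfR^{r_1}\cdots\sfL^{\ell_b}\sfR^{r_b}$ by the insertions \ref{LC1}--\ref{LC6}, and the task is to count these colourings explicitly. I would first record the coarse (run-level) shape of any such colouring: insertions \ref{LC1} and \ref{LC3} only lengthen the run of an existing red symbol, \ref{LC5} and \ref{LC6} only add an all-black prefix and suffix, and the only insertions that can create a fresh run \emph{strictly between} red symbols are \ref{LC4} (black $\sfR$ just after the first red $\sfL$, which splits the first $\sfL$-run when $\ell_1>1$) and its mirror \ref{LC2} (which splits the last $\sfR$-run when $r_b>1$). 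Hence the red-carrying $\sfL$-runs of $\gO_\tau$ are consecutive, and likewise the red-carrying $\sfR$-runs, \emph{with a common shift}: there is an $s$ with $0\le s\le m-b$ so that, up to the single symbol possibly detached by \ref{LC4} or \ref{LC2}, the body of $\sfL^{\ell_i}$ sits in the run $\sfL^{\ell'_{s+i}}$ and the body of $\sfR^{r_i}$ in the run $\sfR^{r'_{s+i}}$ of $\gO_\tau$, for $i=1,\dots,b$. The shift is common because red $\sfL$-runs and red $\sfR$-runs alternate and the only thing that can disturb this alternation at run level is the detachment caused by a non-empty \ref{LC4}- or \ref{LC2}-string, which is exactly what the definitions \eqref{tell}--\eqref{tr} of $\tell'_k$ and $\tr'_k$ are designed to absorb; this $s$ is the summation index in \eqref{ltt}.

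For a fixed admissible $s$, the insertions decouple run by run, so $\ns(\tau)$ restricted to colourings with that shift is a product over the runs of $\gO_\tau$, and summing over $s$ yields \eqref{ltt}. The runs of $\gO_\tau$ that do \emph{not} meet the bodies of the $\sfL^{\ell_i}$, $\sfR^{r_i}$ are filled by the \ref{LC5}-prefix, the \ref{LC6}-suffix and the at most two detached extreme symbols; once $s$ is fixed their content is determined by $\gO_\tau$, so they contribute a factor $1$ (the only side conditions, that the prefix begin with $\sfL$ and the suffix end with $\sfR$, hold automatically). For an \emph{interior} red $\sfL$-run $\sfL^{\ell_i}$ placed inside $\sfL^{\ell'_{s+i}}$, the black $\sfL$'s there can only come from \ref{LC1}, as a possibly empty block immediately to the left of each of the $\ell_i$ red symbols; distributing the $\ell'_{s+i}-\ell_i$ surplus symbols among these $\ell_i$ slots gives $\binom{\ell'_{s+i}-1}{\ell_i-1}$, which is $\gali(\ell'_{s+i})$ when no indicator in \eqref{lttl} is on. Symmetrically, an interior red $\sfR$-run contributes $\binom{r'_{s+i}-1}{r_i-1}=\gari(r'_{s+i})$ via \ref{LC3}.

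The genuinely delicate step --- and the main obstacle --- is the bookkeeping at the two extreme runs. The first red $\sfL$ has no \ref{LC1}-slot to its left (it is the excepted symbol in \ref{LC1}, and black $\sfL$ to its left can only belong to the \ref{LC5}-prefix) but it does carry a \ref{LC4}-slot to its right; if $\ell_1>1$ and that slot is used, this red $\sfL$ is detached into the $\sfL$-run preceding $\sfL^{\ell'_{s+1}}$, which is why $\tell'_k$ strips one symbol from $\ell'_1$ when $\ell_1>1$, while the extra $\indic{i=1,\ell_1>1}$ in the numerator of \eqref{lttl} restores the freedom supplied by \ref{LC4}; if instead $\ell_1=1$, the \ref{LC4}-string merges into the first red $\sfR$-run and gives it one more slot, which is the $\indic{i=1,\ell_1=1}$ in \eqref{lttr}. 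The treatment of the last red $\sfR$ via \ref{LC2}, of $r_b$, of $\tr'_m$, and of the $\indic{i=b,r_b=1}$ in \eqref{lttl} is the mirror image. One then checks, running through the finitely many patterns of active indicators --- and, separately, the case $b=1$, where the two ends interact, as well as small values of $\ell_i,r_i$ where binomial conventions must be respected --- that $\gali(\tell'_{s+i})$ and $\gari(\tr'_{s+i})$ count exactly the admissible fillings of the corresponding runs in each case; multiplying the per-run factors and summing over $s\in\{0,\dots,m-b\}$ gives \eqref{ltt}. The hypotheses $|\gs|\ge3$ and $|\tau|\ge3$ are used only to ensure that the codes $\gO_\gs$, $\gO_\tau$ and their block decompositions are defined, the degenerate pattern $\gs=21$ having already been treated directly.
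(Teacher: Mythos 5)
Your proof is correct and follows essentially the same route as the paper's: both start from \refL{LC}, show that the red blocks of $\gO_\gs$ occupy consecutive runs of $\gO_\tau$ with a common shift $s$ (up to the single symbol detachable by \ref{LC4} or \ref{LC2}), and then count run by run, with the indicator adjustments in \eqref{tell}--\eqref{lttr} absorbing exactly the boundary effects you describe. One small correction to your closing aside: the hypothesis $|\gs|\ge3$ is not merely to make the codes well defined --- the code $\gO_{21}=\sfL\sfR$ exists, yet \eqref{ltt} fails for $\gs=21$ (cf.\ \refR{RTT}), because in that case \ref{LC4} and \ref{LC2} insert into the \emph{same} gap of $\gO_\gs$ and can together create two fresh all-black runs there, destroying the common-shift structure on which the whole count rests.
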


\begin{proof}
 The occurrences of $\gs$ in $\gt$ are described by colourings of $\gO_\tau$
that can be obtained as in \refL{LC}. 
Consider one such colouring, $\hgO_\tau$ say.
We find some properties of it.

\pfitem{i}
Consider first the red symbols in $\hgO_\tau$ that correspond to a single
block $B_j$ in $\gO_\gs$. 
These red symbols have the same type ($\sfL$ of $\sfR$), and there
are no other red symbols between them.
It follows from \refL{LC} that they have to belong to the same block,
$B'_{k}$ say, in $\gt$, except for the first and last blocks $B_1$ and
$B_{2b}$. 
If $|B_1|\ge2$,  it is also possible that the first $\sfL$ in $B_1$
corresponds to the last in $B'_{k-2}$, while all others correspond to red
$\sfL$ in $B'_{k}$ (for some odd $k\ge3$). 
We have a symmetric situation for the last block $B_{2b}$ if $|B_{2b}|\ge2$.
 Write $k=k(j)$ for the index of the block $B'_k$ 
in $\hgO_\tau$ that corresponds
to $B_j$. (To be precise in all cases, 
$B'_{k(j)}$ contains the last red $\sfL$ in $B_j$
if $j$ is odd, and the first red $\sfR$ in $B_j$ if $j$ is even.)

\pfitem{ii}
Furthermore, for an $\sfL$-block $B_{2i-1}$ in $\gO_\gs$, the last $\sfL$ in the
corresponding block $B'_{k(2i-1)}$ 
in $\hgO_\tau$ 
has to be red, except in the case
of the last $\sfL$-block $B_{2b-1}$ if $|B_{2b}|=1$; in that exceptional
case there is no restriction on the red subset of $B'_{k(2b-1)}$ 
(except it having the size $\ell_{b}$ of $B_{2b-1}$). 
For an $\sfR$-block $B_{2i}$ there is a symmetric condition, unless $i=1$
and $|B_1|=1$.

\pfitem{iii}
In all cases, $k=k(j)\equiv j\pmod 2$. 
Moreover, no completely black blocks can be inserted between the red symbols
in two consecutive blocks of $\gO_\gs$. Hence,
$k(j+1)=k(j)+1$ for every $j<2b$, and thus there exists 
$s\in[0,m-b]$ such that
$k(j)=j+2s$ for all $i$.

Conversely, any choice of red symbols satisfying (i)--(iii) for some
$s\in[0,m-b]$ gives a colouring of the code $\gO_\tau$ that 
can be constructed as in \refL{LC}, and thus corresponds to an occurrence of
$\gs$ in $\tau$.

For each choice of $s$, 
the choices of red symbols permitted by (i)--(iii) for 
an  $\sfL$-block $B_{2i-1}$ 
is $\ga_{\sfL,i}(\tell'_{s+i})$;
note that for $1<i<b$, this is just $\binom{\ell'_{i+s}-1}{\ell_i-1}$,
while for $i=1$ and $b$ there are (possibly)
some adjustments that are taken care of by the indicator functions
in \eqref{tell} and \eqref{lttl}.
Similarly, the choices of red symbols for
an  $\sfR$-block $B_{2i}$ 
is $\ga_{\sfR,i}(\tr'_{s+i})$.
Hence, still for a fixed $s$, 
the total number of choices of red symbols in $\gO_\tau$
is given by the product in \eqref{ltt},
because the choices for the different blocks $B_1,\dots,B_{2b}$
can be made independently of each other.
Consequently, \eqref{ltt} holds.
\end{proof}

\begin{remark}\label{RTT}
The condition $|\gs|\ge3$ in \refL{LTT} excludes the two cases $\gs=1$ and
$\gs=21$. Recall that both these cases are trivial, with $\occ_1(\tau)=|\tau|$
and $\occ_{21}(\tau)=|\tau|-1$ for any tree permutation $\tau$.
(The latter because the number of inversions in $\tau$ equals the number of
edges in the tree $\gO_\tau$.) 
Note that $21$ has the code $LR$, so in the notation above, 
it has $b=1$ and $\ell_1=r_1=1$; however, \eqref{ltt} is not valid in this
case.
\end{remark}

Recall that $b$ in \refL{LTT} is denoted $b(\gs)$, see \refS{Sforest},
and that we also have defined $b(1):=1$ for the case $\gs=1$.
For any tree permutation $\gs$
we define, for $b=b(\gs)$
vectors $x_j=(\ell'_j,r'_j)$, 
\begin{align}\label{kem}
&\ff_\gs\bigpar{x_1,\dots,x_b}:=\prod_{i=1}^{b}
\ga_{\sfL,i}\bigpar{\ell'_{i}}
\ga_{\sfR,i}\bigpar{r'_{i}},
\qquad \text{if }|\gs|\ge3,
\intertext{
with 
$\ga_{\sfL,i}$ and $\ga_{\sfR,i}$ given by \eqref{lttl}--\eqref{lttr},
and 
}
&  \ff_\gs(x_1):=\ell'_1+r'_1=h(x_1) \label{kem2}
\qquad \text{if }|\gs|\le2.
\end{align}
(In the exceptional cases $1$ and $21$ where \eqref{kem2} applies, 
we have $b(\gs)=1$.)

We compute also some expectations needed later.
\begin{lemma}\label{Lega}
Let $\gs$ be as in \refL{LTT} and 
let $\gali$ and $\gari$ be given by \eqref{lttl}--\eqref{lttr}.
Let  $L_i$ and $R_i$ have the geometric distribution in \eqref{LR}.
Then,  
\begin{align}\label{lxl}
  \E\ga_{\sfL,i}(L_{i})&=\bigpar{1+\indic{i=1,\ell_1>1}}
\bigpar{1+\indic{i=b,r_b=1}},
\\\label{lxr}
  \E\ga_{\sfR,i}(R_{i})&=\bigpar{1+\indic{i=b,r_b>1}}
\bigpar{1+\indic{i=1,\ell_1=1}}.
\end{align}
\end{lemma}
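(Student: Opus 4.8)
The plan is to reduce both identities to one elementary fact about the geometric distribution and then finish with a short case check. First, observe that \eqref{lttl} and \eqref{lttr} are mirror images of each other: applying the block‑reversal $i\mapsto b+1-i$ together with the swap $\sfL\leftrightarrow\sfR$, $L_i\leftrightarrow R_i$, $\ell_1\leftrightarrow r_b$ carries \eqref{lttl} into \eqref{lttr} and the right‑hand side of \eqref{lxl} into that of \eqref{lxr}. So it suffices to prove \eqref{lxl}, and \eqref{lxr} follows by symmetry (or by repeating the argument verbatim).

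The heart of the matter is the identity $\E\binom{G}{k}=1$ for every integer $k\ge0$, where $G:=L_i-1$, so that by \eqref{LR} one has $\P(G=n)=2^{-(n+1)}$ for $n\ge0$. I would prove this either analytically, from $\sum_{n\ge0}\binom nk x^n=x^k/(1-x)^{k+1}$ evaluated at $x=\tfrac12$, or, more transparently, by noting that $\binom{n-1}{k}2^{-n}$ is the probability that a sequence of independent fair coin flips produces its $(k+1)$st head exactly at flip $n$; summing over $n$ gives the probability of eventually seeing $k+1$ heads, which is $1$. From this, Pascal's rule (equivalently Vandermonde, $\binom{G+t}{k}=\sum_{j=0}^{t}\binom tj\binom{G}{k-j}$) gives, for any fixed integer $t\ge0$ with $k\ge t$, that $\E\binom{G+t}{k}=\sum_{j=0}^{t}\binom tj=2^t$, since every term on the right then has nonnegative lower index and hence expectation $1$.

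It remains to match \eqref{lttl} to this template: $\ga_{\sfL,i}(L_i)=\binom{G+t}{k}$ with $t:=\indic{i=1,\ell_1>1}+\indic{i=b,r_b=1}$ and $k:=\ell_i-1+\indic{i=b,r_b=1}$, and to verify $k\ge t$ in all cases. This is exactly where the side conditions baked into the indicators are used: if $\indic{i=1,\ell_1>1}=1$ then $\ell_i=\ell_1\ge2$, which is precisely what is needed to absorb that extra unit of $t$; the remaining cases are immediate because $\ell_i\ge1$. Since $2^{\indic{A}}=1+\indic{A}$, this yields $\E\ga_{\sfL,i}(L_i)=2^t=\bigl(1+\indic{i=1,\ell_1>1}\bigr)\bigl(1+\indic{i=b,r_b=1}\bigr)$, which is \eqref{lxl}; and \eqref{lxr} comes out identically, the corresponding inequality $k\ge t$ holding because $\indic{i=b,r_b>1}=1$ forces $r_i=r_b\ge2$.

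The only real obstacle is bookkeeping — ensuring the Pascal/Vandermonde expansion never produces a binomial coefficient with negative lower index (i.e.\ checking $k\ge t$), and confirming that the combinatorial hypotheses $\ell_1>1$ and $r_b>1$ appearing in \eqref{lttl}--\eqref{lttr} are exactly strong enough to guarantee this. There is no analytic difficulty.
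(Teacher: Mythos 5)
Your proof is correct and rests on the same core computation as the paper's: the binomial moments of the geometric distribution (the paper verifies $\E\binom{L_i-1}{k}=1$, $\E\binom{L_i}{k}=2$, $\E\binom{L_i+1}{k}=4$ separately via probability generating functions and then runs a four-case check, whereas you derive all of these at once from $\E\binom{L_i-1}{k}=1$ and Vandermonde as $2^t$, with the single constraint $k\ge t$ — an alternative the paper itself hints at in its parenthetical remarks about Pascal's rule). Your verification that the side conditions $\ell_1>1$, resp.\ $r_b>1$, guarantee $k\ge t$ is exactly the needed bookkeeping, so the argument is complete.
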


\begin{proof}
In the definition \eqref{lttl}, there are two special cases:
(I) $i=1$ and $\ell_1>1$; (II) $i=b$ and $r_b=1$. Note that both may occur
together, if $b=1$; thus there are four possible combinations.

\pfcase{Neither (I) nor (II)} 
In this case, \eqref{lttl} is simply
$\binom{\ell'-1}{\ell_i-1}$, and thus
\begin{align}\label{lx1a}
\E  \ga_{\sfL,i}(L_{i})
=\E\binom{L_i-1}{\ell_i-1}.
\end{align}
To compute this binomial moment, we  note that the \pgf{} of $L_i-1$ is,  
by \eqref{LR},
\begin{align}\label{lx1b}
  g_{L-1}(z):=\suml z^{\ell-1}2^{-\ell}=\frac{1/2}{1-z/2}=\frac{1}{2-z},
\end{align}
and thus, 
\begin{align}\label{lx1c}
  \E \binom{L_i-1}{k}
=\frac{1}{k!}\frac{\ddx^k}{\dd z^k}g_{L-1}(1)=1,
\qquad k\ge0
.\end{align}
(Alternatively, compute $[z^k]g_{L-1}(1+z)$.)
Hence, in this case, 
\begin{align}\label{lx1d}
\E  \ga_{\sfL,i}(L_{i})=1.
\end{align}

\pfcase{(I) but not (II)} 
Then, $\ell_1\ge2$ and \eqref{lttl} yields
\begin{align}\label{lx2a}
\E  \ga_{\sfL,i}(L_{i})
=\E\binom{L_i}{\ell_i-1}.
\end{align}
The \pgf{} of $L_i$ is, by \eqref{lx1b},
\begin{align}\label{lx2b}
g_L(z)=zg_{L-1}(z)=\frac{z}{2-z}=\frac{2}{2-z}-1,
\end{align}
and thus, 
\begin{align}\label{lx2c}
  \E \binom{L_i}{k}
=\frac{1}{k!}\frac{\ddx^k}{\dd z^k}g_{L}(1)=2,
\qquad k\ge1.
\end{align}
(Alternatively, use \eqref{lx1c} and 
$\binom{L_i}{k}=\binom{L_i-1}{k}+\binom{L_i-1}{k-1}$.)
Hence, 
\eqref{lx2a} yields
\begin{align}\label{lx2d}
\E  \ga_{\sfL,i}(L_{i})=2.
\end{align}

\pfcase{(II) but not (I)} 
Then, \eqref{lttl} yields, using \eqref{lx2c},
\begin{align}\label{lx3d}
\E  \ga_{\sfL,i}(L_{i})
=\E\binom{L_i}{\ell_i}
=2.
\end{align}

\pfcase{Both (I) and (II)} 
Then, $b=i=1$, $\ell_1\ge2$, and \eqref{lttl} yields
\begin{align}\label{lx4a}
\E  \ga_{\sfL,i}(L_{i})
=\E\binom{L_i+1}{\ell_i}.
\end{align}
The \pgf{} of $L_i+1$ is, by \eqref{lx2b},
\begin{align}\label{lx4b}
g_{L+1}(z)=zg_{L}(z)=\frac{2z}{2-z}-z=\frac{4}{2-z}-2-z,
\end{align}
and thus, 
\begin{align}\label{lx4c}
  \E \binom{L_i+1}{k}
=\frac{1}{k!}\frac{\ddx^k}{\dd z^k}g_{L}(1)=4,
\qquad k\ge2
.\end{align}
(Alternatively, use \eqref{lx2c} and 
$\binom{L_i+1}{k}=\binom{L_i}{k}+\binom{L_i}{k-1}$.)
Hence, 
by \eqref{lx4a},
\begin{align}\label{lx4d}
\E  \ga_{\sfL,i}(L_{i})=4.
\end{align}

We may summarize the four cases \eqref{lx1d}, \eqref{lx2d}, \eqref{lx3d}
and \eqref{lx4d} as \eqref{lxl}.
Similarly, by only notational changes, \eqref{lttr} yields
\eqref{lxr}.
\end{proof}

\begin{lemma}\label{Laga}
  Let $\gs$ be any tree permutation, let $b:=b(\gs)$,
and let $(X_i)_i$ be the \iid{} random vectors defined in \eqref{XLR}.
Then
\begin{align}\label{lj3}
  \E \fgs\bigpar{X_1,\dots,X_b}=4.
\end{align}
\end{lemma}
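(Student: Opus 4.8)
The plan is to reduce the claim to \refL{Lega} by exploiting independence. First dispose of the trivial cases $|\gs|\le2$: here \eqref{kem2} gives $\fgs(X_1)=h(X_1)$ and $b(\gs)=1$, so \eqref{lj3} is simply $\E h(X_1)=4$, which is \eqref{nu4} in \refL{Lh}.

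Now suppose $|\gs|\ge3$, so that $\fgs$ is the product in \eqref{kem}. Since the variables $L_1,R_1,\dots,L_b,R_b$ are all independent by \eqref{LR}, the expectation of a product of functions of distinct ones factors, and
\begin{equation*}
  \E\fgs\bigpar{X_1,\dots,X_b}=\prod_{i=1}^{b}\E\bigsqpar{\gali(L_i)}\,\E\bigsqpar{\gari(R_i)}.
\end{equation*}
I would then insert the explicit values \eqref{lxl}--\eqref{lxr} supplied by \refL{Lega}. For every $i$ with $1<i<b$ all the indicators occurring in \eqref{lxl}--\eqref{lxr} vanish, so those factors equal $1$; hence only the boundary indices $i=1$ and $i=b$ can contribute.

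The remaining computation hinges on one observation: of the complementary events $\set{\ell_1=1}$ and $\set{\ell_1>1}$ exactly one occurs, so $\bigpar{1+\indic{\ell_1>1}}\bigpar{1+\indic{\ell_1=1}}=2$ always, and likewise $\bigpar{1+\indic{r_b=1}}\bigpar{1+\indic{r_b>1}}=2$. If $b\ge2$, the factor at $i=1$ contributes $\bigpar{1+\indic{\ell_1>1}}\bigpar{1+\indic{\ell_1=1}}=2$ and the factor at $i=b$ contributes $\bigpar{1+\indic{r_b=1}}\bigpar{1+\indic{r_b>1}}=2$, so the product is $4$; if $b=1$ the single boundary factor is the full product of all four corrections, namely $2\cdot2=4$. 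Either way \eqref{lj3} follows. The only step needing genuine care is the case $b=1$, where the two boundary blocks coincide and all four indicator adjustments in \eqref{lttl}--\eqref{lttr} may be active simultaneously, but the pairing of complementary indicators still yields exactly $4$.
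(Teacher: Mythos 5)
Your proof is correct and follows essentially the same route as the paper: factor the expectation by independence, plug in the values from \refL{Lega}, note that interior indices contribute $1$, and pair the complementary indicators $\indic{\ell_1=1}$, $\indic{\ell_1>1}$ (and likewise for $r_b$) to get $2\cdot2=4$. The paper regroups the four boundary factors at the end rather than splitting into the cases $b\ge2$ and $b=1$, but this is only a cosmetic difference.
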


\begin{proof}
The case $|\gs|\le2$ is immediate by \eqref{kem2} and \refL{Lh}.

Assume thus $|\gs|\ge3$.
Then, by \eqref{kem}, independence, and \refL{Lega},
\begin{align}
&  \E \fgs(X_1,\dots,X_b)
= \E \prodib \ga_{\sfL,i}\bigpar{L_{i}}\ga_{\sfR,i}\bigpar{R_{i}}
=  \prodib \E\ga_{\sfL,i}\bigpar{L_{i}}\prodib\E \ga_{\sfR,i}\bigpar{R_{i}}
\notag\\&\quad=
\bigpar{1+\indic{\ell_1>1}}
\bigpar{1+\indic{r_b=1}}
\cdot\bigpar{1+\indic{r_b>1}}
\bigpar{1+\indic{\ell_1=1}}
\notag\\&\quad=
\bigpar{1+\indic{\ell_1>1}}
\bigpar{1+\indic{\ell_1=1}}
\cdot\bigpar{1+\indic{r_b=1}}
\bigpar{1+\indic{r_b>1}}
  \notag\\&\quad=
(1+1)(1+1)=4,
\end{align}
 which completes the proof.
\end{proof}

We have no simple explanation for the, perhaps surprising, fact that the
expectation \eqref{lj3} is the same for every tree permutation $\gs$,
\cf~\refProb{Prob=}.

\section{Patterns in a random tree permutation
of given length}\label{Staun}
We next consider the occurrences of a pattern $\gs$ in a random
tree permutation $\btau_n$. 
We use the construction and notation in \refSs{SRTB} and \ref{SSU}.
In particular, $X_n$ and $S_m$ are defined by \eqref{LR}--\eqref{SLR}
and $N(n)$ by \eqref{Nx}.

We first consider the case of a tree permutation $\gs$.
Recall $\fgs$ defined by \eqref{kem}--\eqref{kem2}.
\begin{lemma}\label{LJ}
 Let $\gs$ be a tree permutation 
and let $b:=b(\gs)$.
Then
\begin{align}\label{lj1}
  \ns(\btau_n)\eqd
\Bigpar{\sum_{s=0}^{N(n)-b}
\fgs\bigpar{X_{s+1},\dots,X_{s+b}}
\Bigm| S_{N(n)}=n}
+\OLX(1)
.\end{align}
\end{lemma}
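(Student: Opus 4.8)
The plan is to realize $\btau_n$ through the renewal construction of \refS{SRTB} and then read $\ns(\btau_n)$ off \refL{LTT}; the identity \eqref{lj1} then amounts to showing that replacing the boundary‑corrected block lengths $\tell'_k,\tr'_k$ of \refL{LTT} by the plain ones $\ell'_k=L_k$, $r'_k=R_k$ costs only an $\OLX(1)$ error. First I would clear away the trivial cases $|\gs|\le2$: there $b(\gs)=1$ and $\fgs(x_1)=h(x_1)$ by \eqref{kem2}, so $\sum_{s=0}^{N(n)-1}\fgs(X_{s+1})=\sum_{i=1}^{N(n)}h(X_i)=S_{N(n)}$, which equals $n$ on the event $\set{S_{N(n)}=n}$, whereas $\occ_1(\btau_n)=|\btau_n|=n$ and $\occ_{21}(\btau_n)=|\btau_n|-1=n-1$; hence \eqref{lj1} holds with error term $0$ or $-1$. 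From now on assume $|\gs|\ge3$ and set $b:=b(\gs)$.

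By \eqref{tnt} I would work on a probability space where $\btau_n=\bigpar{\btaux_{N(n)}\mid S_{N(n)}=n}$. Taking $\tau=\btaux_{N(n)}$ (whose code has $2N(n)$ blocks of lengths $L_1,R_1,\dots,L_{N(n)},R_{N(n)}$) in \refL{LTT}, which applies once $n\ge3$, gives on the conditioning event $\ns(\btaux_{N(n)})=\sum_{s=0}^{N(n)-b}\prod_{i=1}^b\gali(\tell'_{i+s})\gari(\tr'_{i+s})$, while by \eqref{kem} (with the $i$‑th argument of $\fgs$ taken to be $X_{s+i}$) one has $\sum_{s=0}^{N(n)-b}\fgs(X_{s+1},\dots,X_{s+b})=\sum_{s=0}^{N(n)-b}\prod_{i=1}^b\gali(L_{i+s})\gari(R_{i+s})$. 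By \eqref{tell}--\eqref{tr}, $\tell'_k=L_k$ unless $k=1$ and $\ell_1>1$, and $\tr'_k=R_k$ unless $k=N(n)$ and $r_b>1$; hence the two sums agree summand by summand except possibly for $s=0$ and for $s=N(n)-b$. Let $D_n$ be the difference of the two sums; then $D_n$ is a sum of at most two signed terms, each a difference of two products of $b$ binomial coefficients, and $D_n=0$ when $N(n)<b$ (both sums empty).

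The heart of the matter is to show that the unconditioned $D_n$ is $\OLX(1)$. Each product $\prod_i\gali(\cdot)\gari(\cdot)$ entering $D_n$ is a product of binomial coefficients that are polynomials, of degree depending only on $\gs$, in at most $2b$ of the lengths $L_i,R_i$; for the $s=0$ term these are $L_1,R_1,\dots,L_b,R_b$, which by \eqref{LR} have finite moments of every order, so that term is $\OLX(1)$ by \Holder's inequality; for the $s=N(n)-b$ term they are $L_{N(n)-j},R_{N(n)-j}$ with $0\le j\le b-1$, and here \refL{Lren} (together with $\E h(X_i)=4$ from \refL{Lh}) gives $\E\bigsqpar{h(X_{N(n)-j})^q}=O(1)$ uniformly in $n$, hence $\E\bigsqpar{L_{N(n)-j}^q},\E\bigsqpar{R_{N(n)-j}^q}=O(1)$ as well, and again \Holder's inequality makes that term $\OLX(1)$. (The cases $n\le2$ are irrelevant for the asymptotic statement.) Thus $\E|D_n|^q=O(1)$ for every $q$. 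Finally, since the increments $h(X_i)=L_i+R_i$ form an aperiodic renewal sequence of finite mean, $\P\bigpar{S_{N(n)}=n}\ge c$ for some $c>0$ and all large $n$ (a bound of this type is already used in the proof of \refT{TF}), so $\E\bigsqpar{|D_n|^q\mid S_{N(n)}=n}\le c\qw\,\E|D_n|^q=O(1)$ for every $q$; hence $\bigpar{D_n\mid S_{N(n)}=n}=\OLX(1)$, which is exactly \eqref{lj1}.

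The one genuinely delicate step is the bound on the $s=N(n)-b$ correction: the last $b$ pairs of block lengths of a tree permutation of \emph{exact} length $n$ are size‑biased rather than geometric, and it is precisely the crude renewal inequality \refL{Lren} --- together with the lower bound on $\P(S_{N(n)}=n)$ --- that lets one absorb them into $\OLX(1)$. The term‑by‑term comparison, the \Holder{} estimates, and the trivial cases are routine.
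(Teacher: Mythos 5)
Your proposal is correct and follows essentially the same route as the paper: realize $\btau_n$ via \eqref{tnt}, apply \refL{LTT} to $\btaux_{N(n)}$, observe that the boundary corrections in \eqref{tell}--\eqref{tr} affect only the $s=0$ and $s=N(n)-b$ summands, bound those two terms in every $L^q$ (the first by finite moments of the geometric variables, the second via \refL{Lren}), and push the $\OLX(1)$ bound through the conditioning using $\P(S_{N(n)}=n)$ being bounded below. The only cosmetic difference is that the paper sandwiches the corrected sum between two uncorrected sums using nonnegativity, whereas you bound the signed difference directly; both reduce to the same two moment estimates.
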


\begin{proof}
Assume first $|\gs|\ge3$, so $\fgs$ is given by \eqref{kem}.
Recall $\btaux_m$ defined in \refS{SRTB}, and note that
$\btaux_{N(n)}$ is a tree permutation having a code with $2N(n)$ blocks of
lengths $L_1,\dots,R_{N(n)}$. \refL{LTT} thus shows that
\begin{multline}\label{lja}
  \ns\bigpar{\btaux_{N(n)}} = 
\sum_{s=0}^{N(n)-b}\prod_{i=1}^{b}
\ga_{\sfL,i}\bigpar{L_{i+s}-\indic{s=0,i=1,\ell_1>1}}
\\\cdot
\ga_{\sfR,i}\bigpar{R_{i+s}-\indic{s=N(n)-b, i=b, r_b>1}}.
\end{multline}
Except in the extreme cases $s=0$ and $s=N(n)-m$,
 the product in the sum in \eqref{lja}
is
\begin{align}\label{ljb}
\prodib \ga_{\sfL,i}\bigpar{L_{i+s}}\ga_{\sfR,i}\bigpar{R_{i+s}}
=\fgs \bigpar{X_{s+1},\dots,X_{s+b}}
.\end{align}
In the cases $s=0$ and $s=N(n)-b$, the product might be smaller, but is still
$\ge0$.
Hence, \eqref{lja} yields
\begin{align}
  \label{ljc}
\sum_{s=1}^{N(n)-b-1}
\fgs\bigpar{X_{s+1},\dots,X_{s+b}}
\le  \ns\bigpar{\btaux_{N(n)}} 
\le
\sum_{s=0}^{N(n)-b}
\fgs\bigpar{X_{s+1},\dots,X_{s+b}}
.\end{align}
We claim that 
\begin{align}\label{ljclaim}
\fgs\bigpar{X_{1},\dots,X_{b}}
,\;
\fgs\bigpar{X_{N(n)-b+1},\dots,X_{N(n)}}
=\OLX(1).
\end{align}
This implies that the difference of the first and last sums in \eqref{ljc}
is $\OLX(1)$, and thus 
\begin{align}
  \label{ljc2}
  \ns\bigpar{\btaux_{N(n)}} 
=
\sum_{s=0}^{N(n)-b}
\fgs\bigpar{X_{s+1},\dots,X_{s+b}}
+\OLX(1)
.\end{align}

To show \eqref{ljclaim}, note first that \eqref{kem} and
\eqref{lttl}--\eqref{lttr} imply that
\begin{align}\label{bam1}
  \fgs(X_{k+1},\dots,X_{k+b})\le  \prodib (L_{k+i}+R_{k+i})^c
=\prodib h(X_{k+i})^c,
\end{align}
for some $c<\infty$ depending on $\gs$ only.
Hence, using \Holder's inequality, \eqref{ljclaim} follows if we show that
for every $q<\infty$ and every $j\in[1,b]$,
\begin{align}\label{bam2}
  \E h(X_j)^q &=O(1),&
  \E h(X_{N(n)-b+j})^q &=O(1),
\end{align}
The first part is trivial, since for any fixed $j$, we have 
$\E h(X_j)^q = \E h(X_1)^q<\infty$.
The second part follows from \refL{Lren}.

Hence, \eqref{ljc2} holds, and \eqref{lj1} follows by conditioning on
$S_{N(n)}=n$, recalling \eqref{tnt}. Note that the error term $\OLX(1)$
survives
this conditioning, because $\P(\SNn=n)\to 1/\E h(X_1)>0$,
see \eg{} \cite[Theorem 2.4.2]{Gut-SRW}, 
and thus for any $q<\infty$,
\begin{align}\label{cq}
  \E\bigsqpar{ |\OLX(1)|^q\mid \SNn=n}
\le
\frac{  \E\bigsqpar{ |\OLX(1)|^q}}{\P\bigsqpar{\SNn=n}}=O(1).
\end{align}

Finally, if $|\gs|\le2$, then $b=1$ and 
\begin{align}
\sum_{s=0}^{N(n)-b}\fgs(X_{s+1})  
=
\sum_{s=0}^{N(n)-1}h(X_{s+1}) =S_{N(n)}. 
\end{align}
Furthermore, $\ns(\btau_n)=n$ or $n-1$, and thus \eqref{lj1} is trivial.
\end{proof}

The sum in \eqref{lj1} is a constrained \Ustat{} of the type in \eqref{UU1},
with $d=1$ and $b_1=b(\gs)$.
We extend \refL{LJ} to forest permutations $\gs$.

\begin{lemma}\label{LK}
  Let $\gs$ be a forest permutation with block decomposition
  $\gs=\gs_1\opluss\gs_d$.
Let $b_j:=b(\gs_j)$, 
and 
define
\begin{align}\label{lk0}
  \fgs\bigpar{(x_{1i})_{i=1}^{b_1}, \dots,(x_{di})_{i=1}^{b_d}}
:=\prod_{j=1}^d \ff_{\gs_j}\bigpar{x_{j1},\dots,x_{jb_j}}.
\end{align}
Then
\begin{align}\label{lk}
  \ns(\btau_n)\eqd
\Bigpar{\UU_{N(n)}(\fgs)
\Bigm| S_{N(n)}=n}
+\OLX(n^{d-1})
.\end{align}
\end{lemma}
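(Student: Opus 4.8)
The plan is to reduce the forest‑permutation case to the single‑block formula of \refL{LTT} via the representation \eqref{tnt}, and then to transfer the result to $\btau_n$ by conditioning on $\SNn=n$ exactly as in the proof of \refL{LJ}. I would first work with $\ns(\btaux_m)$, where $\btaux_m$ is the tree permutation whose code has $2m$ blocks of lengths $L_1,R_1,\dots,L_m,R_m$, and only at the end put $m=N(n)$ and condition.

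The combinatorial core is a description of the occurrences of $\gs=\gs_1\opluss\gs_d$ in $\btaux_m$. Any such occurrence splits as a concatenation $\gs'_1\dotsm\gs'_d$, where $\gs'_j$ is an occurrence of the block $\gs_j$; let $V_j$ denote the corresponding vertex set in $G_{\btaux_m}$. Since in the pattern $\gs_1\opluss\gs_d$ every value of group $j$ lies below every value of group $j+1$, while the positions of group $j$ precede those of group $j+1$, no pair with one vertex in $V_j$ and one in $V_{j'}$, $j\ne j'$, is an inversion, so $G_{\btaux_m}$ has no edge between distinct $V_j$. I would then establish the converse on a restricted range: if $\gs'_1,\dots,\gs'_d$ are occurrences of $\gs_1,\dots,\gs_d$ whose code‑windows — which by \refL{LTT} and its proof consist of a bounded number of consecutive code‑blocks, essentially the blocks $2s_j+1,\dots,2s_j+2b_j$ for some shift $s_j\ge0$ — are ordered and sufficiently separated (a fixed bounded gap between consecutive windows, and a fixed bounded distance from each end of $\gO_{\btaux_m}$), then $\gs'_1\dotsm\gs'_d$ is a genuine occurrence of $\gs$: by \refL{Lleaf} every edge of $G_{\btaux_m}$ joins symbols whose code‑blocks lie at distance at most $3$, so the $V_j$ carry no cross‑edges, and together with the automatic ordering of positions this forces the values to be sorted across the groups. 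For such well‑separated configurations the red subsets defining the $\gs'_j$ lie in disjoint parts of the code and so may be chosen independently of one another, whence the number of occurrences of $\gs$ with this decomposition factorises as $\prod_{j=1}^d$(number of occurrences of $\gs_j$ with shift $s_j$); by the $s_j$‑th term of \eqref{ltt} — the code‑end corrections \eqref{tell}--\eqref{tr} being inactive because $s_j$ and $s_j+b_j$ are bounded away from $1$ and $m$ — this equals $\prod_{j=1}^d\ff_{\gs_j}\bigpar{X_{s_j+1},\dots,X_{s_j+b_j}}$, which by \eqref{kem}--\eqref{kem2} and \eqref{lk0} is exactly $\fgs\bigpar{(X_{s_j+k})_{k=1}^{b_j}}_{j=1}^d$. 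Writing $i_j:=s_j+1$, these configurations index a sub‑range of the summation set of the constrained \Ustat{} $\UU_m(\fgs)$ of \eqref{UU1} (with $d$ groups of sizes $b_1,\dots,b_d$), on which its summand is precisely this product.

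It then remains to bound the leftover terms; this parallels the estimate of $\occ''_\gs$ in the proof of \refT{TF}. The decompositions that are not well separated — $s_1$ small, or $s_{j+1}-s_j-b_j$ small for some $j$, or $s_d+b_d$ near $m$ — form a set of $O(m^{d-1})$ tuples, and since $h(X_i)\ge1$ gives $S_n\ge n$, hence $N(n)\le n$, there are at most $O(n^{d-1})$ of them when $m=N(n)$; for each such tuple the number of occurrences of $\gs$ with that decomposition is bounded by a product of boundedly many factors of the form $h(X_i)^c$ with $c$ fixed, hence is $\OLX(1)$ uniformly, since $h(X_1)$ has moments of all orders (\Holder's inequality), so by Minkowski's inequality their total contribution is $\OLX(n^{d-1})$. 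The same estimate controls the part of $\UU_m(\fgs)$ indexed by non‑well‑separated tuples, as well as the bounded discrepancies that arise for blocks $\gs_j$ with $|\gs_j|\le2$, where \eqref{kem2} sets $\ff_{\gs_j}=h$ and the window‑count differs from $h(X_{s_j+1})$ only by $O(1)$. Hence $\ns(\btaux_{N(n)})=\UU_{N(n)}(\fgs)+\OLX(n^{d-1})$, and by \eqref{tnt} one conditions on $\SNn=n$; the $\OLX(n^{d-1})$ error survives because $\P(\SNn=n)\to 1/\E h(X_1)=1/4>0$ by \refL{Lh}, exactly as in \eqref{cq}. This gives \eqref{lk}.

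I expect the main obstacle to be the combinatorial step: making precise the correspondence between sufficiently well‑separated window configurations and genuine occurrences of $\gs$ — a forest analogue of \refLs{LC} and~\ref{LTT} — in particular handling windows that are nearly adjacent or touch an end of the code, and checking that the well‑separated index set agrees with the summation set of $\UU_m(\fgs)$ up to the $O(n^{d-1})$ terms absorbed into the error. The renewal‑theoretic passage to $N(n)$ and the moment bounds are then routine, essentially as in the proof of \refL{LJ}.
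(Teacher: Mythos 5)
Your proposal follows the paper's proof essentially step for step: decompose an occurrence of $\gs$ into occurrences of its blocks located in windows of consecutive code-blocks, observe that well-separated interior windows contribute exactly the summand of the constrained \Ustat{} $\UU_m(\fgs)$, bound the boundary and nearly-adjacent tuples by $\OLX(n^{d-1})$ via the moment estimates (including \refL{Lren} for the indices near $N(n)$), and transfer to $\btau_n$ by conditioning on $S_{N(n)}=n$ as in \eqref{cq}.

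One step as written would not suffice, however: you propose to absorb ``the bounded discrepancies that arise for blocks $\gs_j$ with $|\gs_j|\le2$'' into the $\OLX(n^{d-1})$ error. If the window-count for such a block really differed from $h(X_{i_j})$ by $O(1)$ on every well-separated tuple, the accumulated error would be of order $n^{d}$, not $n^{d-1}$, and the lemma would fail. What saves the argument --- and what the paper checks explicitly via \refL{Lleaf} --- is that for an interior, well-separated window the count is \emph{exactly} $L_{i_j}+R_{i_j}=h(X_{i_j})=\ff_{\gs_j}(X_{i_j})$: for $\gs_j=1$ one chooses any of the $L_{i_j}+R_{i_j}$ symbols of the two blocks $B'_{2i_j-1}\cup B'_{2i_j}$, and for $\gs_j=21$ the inversions with their $\sfL$ in $B'_{2i_j-1}$ are the $L_{i_j}+R_{i_j}-1$ edges of types \ref{e1}--\ref{e2} (which overlap in one possibility) plus the single edge of type \ref{e3}. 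Only the boundary windows produce a genuine discrepancy, and those are already among your $O(n^{d-1})$ bad tuples. With this exactness supplied, your argument closes and coincides with the paper's.
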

\begin{proof}
Recall again $\btaux_m$ from \refS{SRTB}, and
consider first $\ns(\btaux_m)$, for some given $m$.
By definition, $\btaux_m$ has $2m$ blocks, which we denote by
$B'_1,\dots,B'_{2m}$. 

As before, we mark an occurrence of $\gs$ in $\tau=\btaux_m$ by colouring
the corresponding symbols in the code $\gO_\tau$ red (and the remaining ones
black). Then each $\gs_j$ corresponds to a set of red symbols, $A_j$ say;
these sets $A_j$ are subsets of $\set{1,\dots,|\btau_m|}$.

As in \eqref{mbb}, let $\mbb_j:=b_j-1$.
For each $\gs_j$ with $|\gs_j|\ge3$, the red symbols $A_j$ are as in the
proof of \refL{LTT}, and they lie in some blocks
$B'_{2i_j-1},\dots,B'_{2(i_j+\mbb_j)}$, possibly also with a red symbol in
$B'_{2i_j-3}$ or $B'_{2(i_j+\mbb_j)+2}$.

If $|\gs_j|=2$, so $\gs_j=21$, then the red symbols in $A_j$ are an $\sfL$
and an $\sfR$ forming an edge, and thus described by
\refL{Lleaf}\ref{e1}--\ref{e3}; we then define $i_j$ 
so that the $\sfL$ belong to $B'_{2i_j-1}$ (and thus the $\sfR$ to
$B'_{2i_j}$ or $B'_{2i_j+2}$).

Finally, if $\gs_j=1$, $A_j$ is a single red symbol, which can be either
$\sfL$ or $\sfR$; we define $i_j$ such that this symbol belongs to
$B'_{2i_j-1}$ or $B'_{2i_j}$.

The sets $A_j$ follow each other in order, and thus
we must have $1\le i_1\le i_2\le \dots i_d\le m$. (Equality is possible,
e.g.\ if $|\gs_j|=1$.)
Moreover, 
for a given sequence $i_1,\dots,i_d$,
if all gaps $i_{j+1}-i_j\ge3$, then the sets $A_j$ can be chosen
independently, without interfering with each other (by colliding, having
symbols in wrong order, or causing edges between two of them).
If furthermore $i_1>1$ and $i_d+\mbb_d<m$, the number of choices for each
$\gs_j$ 
with $|\gs_j|\ge3$ is $\ff_{\gs_j}\bigpar{X_{i_j},\dots,X_{i_j+\mbb_j}}$
by the proof of \refL{LTT}. 
The same holds for $|\gs_j|\le2$ by the definition \eqref{kem2}: 
if $\gs_j=1$, then $A_j$ is one of the $L_{i_j}+R_{i_j}$ symbols in 
$B'_{2i_j-1}\cup B'_{2i_j}$;
if $\gs=21$, then $A_j$ consists of an $\sfL$ in 
$B'_{2i_j-1}$ and an $\sfR$ in $B'_{2i_j}$  or $B'_{2i_j+2}$ 
chosen according to one of
\ref{e1}--\ref{e3} in \refL{Lleaf}, and this too gives $L_{i_j}+R_{i_j}$
choices. (Note that \ref{e1} and \ref{e2} overlap in one possibility.)
Hence, for such $i_1,\dots,i_d$ the number of possible choices of 
$ A_1,\dots,A_d$ is
\begin{align}\label{columbus}
\prod_{j=1}^d \ff_{\gs_j}\bigpar{X_{i_j},\dots,X_{i_j+\mbb_j}}
=  \fgs\bigpar{ (X_{i})_{i=i_1}^{i_1+\mbb_1}, \dots, (X_i)_{i=i_d}^{i_d+\mbb_d}}.
\end{align}
If some gap $i_{j+1}-i_j\le2$, the number of possibilities may be smaller,
but we may conclude that, recalling the definition \eqref{UU1},
\begin{align}\label{lars}
\bigabs{\ns\bigpar{\btaux_m}-\UU_m(\fgs)}
\le
\sumx \fgs\bigpar{ (X_{i})_{i=i_1}^{i_1+\mbb_1}, \dots, (X_i)_{i=i_d}^{i_d+\mbb_d}}
,\end{align}
where $\sumx$ denotes the sum over $i_1,\dots,i_d\in[1,m-\mbb_d]$ such that either 
$i_1=1$, $i_d=m-\mbb_d$, or $i_j\le i_{j+1}\le i_j+2$ for some $j$. 

We now take $m=N(n)$, condition on $\SNn=n$ and use \eqref{tnt}.
It remains only to show that the sum in \eqref{lars}
(with $m=N(n)$) is $\OLX(n^{d-1})$; this then survives the conditioning as
in \eqref{cq}.
To see this, consider first the terms with $i_1=1$ or $i_j\le i_{j+1}\le i_j+2$ 
for some $i$. Since $m=N(n)\le n$, we may extend the sum to all
$i_1,\dots,i_d\in[1,n]$ satisfying one of these conditions. This is a sum of
$O(n^{d-1})$ terms, and each term is $\OLX(1)$ by 
\eqref{lk0},
\eqref{bam1}--\eqref{bam2}
and \Holder's inequality. 
Hence the sum of these terms is $\OLX(n^{d-1})$ by Minkowski's inequality.

The remaining sum consists of terms with $i_d=m-\mbb_d=N(n)-\mbb_d$, and is
thus 
\begin{align}\label{ola}
  \le \ff_{\gs_d}\bigpar{X_{N(n)-\mbb_d},\dots,X_{N(n)}}
\sum_{i_1,\dots,i_{d-1}=1}^{n}\prod_{j=1}^{d-1}\ff_{\gs_j}\bigpar{X_{i_j},\dots,X_{i_j+\mbb_j}}
.
\end{align}
The first factor is $\OLX(1)$ as shown in \eqref{bam1}--\eqref{bam2}, and
the sum is again a sum of $O(n^{d-1})$ terms that are $\OLX(1)$,
and thus this sum is $\OLX(n^{d-1})$ by Minkowski's inequality.
Hence, \eqref{ola} is $\OLX(n^{d-1})$ by \Holder's inequality, which
completes the proof.
\end{proof}

\begin{proof}[Proof of \refT{TT}]
\refL{LK} and \refP{PUUN2}
show that
\begin{equation}\label{tt1}
\frac{\ns(\btau_n)-\mu{\mux}^{-d}{d!}\qw n^{d}}
{n^{d-1/2}} 
\dto \N\bigpar{0,\gamx^2},
\end{equation}
with convergence of all moments by \refP{PUUNmom};
note that $\E|\ff(X_1,\dots,X_D)|^p<\infty$ 
and $\E h(X_1)^p<\infty$
for every $p<\infty$ by
\eqref{lk0},  \eqref{ljclaim}, and \eqref{bam2}.
Furthermore,
\eqref{lk0} and \refL{Laga} imply that
\begin{align}\label{tt2}
  \mu=\E\ff(X_1,\dots,X_D)=\prod_{j=1}^d \E\ff_{\gs_j} (X_1,\dots,X_{b_j})
=4^d,
\end{align}
while $\nu=4$ by \refL{Lh}.
Hence, $\mu\nu^{-d}=1$, and \eqref{tt} follows from \eqref{tt1}.

To see that $\gamm>0$ if some $|\gs_j|\ge3$,
  we use the criterion in \refP{PUU0}.
By \eqref{fj} and \eqref{lk0},
\begin{align}\label{ek}
  \ff_{\gs,j}(x_1,\dots,x_{b_j})
=\ff_{\gs_j}(x_1,\dots,x_{b_j})\prod_{i\neq  j} \E\ff_{\gs_i}\bigpar{X_1,\dots,X_{b_i}}
= c\ff_{\gs_j}(x_1,\dots,x_{b_j})
\end{align}
for some constant $c>0$.
Now suppose that
 $|\gs_j|\ge3$.
Then, \eqref{ek},
\eqref{kem} and \eqref{lttl}--\eqref{lttr} show that, 
with $x_i=(\ell'_i,r'_i)$, 
$\ff_{\gs,j}(x_1,\dots,x_{b_j})$ is a polynomial in $\set{\ell'_i,r'_i}$ of
total degree
\begin{align}\label{svante}
\gd_j:=  \sum_{i=1}^{b_j} \bigpar{\ell_i-1+r_i-1} + \indic{r_{b_j}=1}+\indic{\ell_1=1}.
\end{align}
We see also that the polynomial has only one term with this degree, and that
this term has a positive coefficient.
Note further that 
\begin{align}\label{sturar}
\gd_j \ge \bigpar{\ell_1-1+\indic{\ell_1=1}}
+\bigpar{r_{b_j}-1+ \indic{r_{b_j}=1}}
\ge 2.
\end{align}
In particular, if we take $x_1=\dots=x_{b_j}=(s,s)$, then 
$\ff_{\gs,j}(x_1,\dots,x_{b_j})$ is a polynomial in $s$
of degree $\gd_j\ge2$.
Hence, if we fix any $n>2b_j$, and consider the event
(which has positive probability)
\begin{align}
  X_i=(L_i,R_i)=
  \begin{cases}
    (s,s),& b_j<i\le 2b_j,
\\ (1,1), & \text{otherwise}
  \end{cases}
\end{align}
for an integer $s\ge1$, we see that 
$S_n(\ff_{\gs,j})$ defined in \eqref{emma}
is a polynomial in $s$ of degree $\gd_j\ge2$.
Furthermore, on the same event, $S_n(h)$ is a polynomial in $s$ of degree 1,
and thus, 
$S_n\bigpar{\ff_{\gs,j}-\frac{\mu}{\nu}h}$ is a non-constant polynomial in $s$.
Consequently, the condition in \refP{PUU0} cannot be satisfied for 
$\ff_{\gs,j} -\frac{\mu}{\nu}h$, and thus \refP{PUUN} shows that $\gamm>0$.
\end{proof}

We compute the asymptotic variance $\gamm$ only in a simple special case.

\begin{example}\label{Elr}
  Suppose that $\gs$ is a tree permutation with $b(\gs)=1$;
thus its code has only two blocks, of lengths $\ell_1=\ell$ and $r_1=r$.
Then, the \Ustat{} $\UU_N(\ff_\gs)$ in \eqref{lk} is simply a partial sum:
\begin{align}\label{sten}
  \UU_N(\ff_\gs)=\sum_{i=1}^N \ff_\gs(X_i) = S_N(\ff_\gs).
\end{align}
This is the special case $d=1$ of an unconstrained \Ustat{}
discussed in \refR{R1}, and \eqref{rov} yields, 
since $\mu=\nu=4$ by \eqref{tt2} and \eqref{nu4}, 
\begin{align}\label{sture}
  \gamm&=\frac{1}{4}\Var\bigsqpar{\ff_\gs(X)-h(X)}
\notag\\&
=\frac{1}{4}\Bigpar{\Var\bigsqpar{\ff_\gs(X)}-2\Cov\bigsqpar{\ff_\gs(X),h(X)}
+\Var\bigsqpar{ h(X)}},
\end{align}
where $X=(L,R)$ with independent $L,R\sim\Ge(1/2)$  as in
\eqref{LR}--\eqref{XLR}.
We recall that $\E L=\E R=2$. A simple calculation, for example using
\eqref{lx2c}, yields $\Var L = \Var R = 2$
and thus $\Var h(X)=\Var(L+R)=4$.

We consider several cases.

\resetCase
\pfcase{$\ell=r=1$} 
This means $\gO_\gs=\sfL\sfR$, and thus $\gs=21$.
As we have seen earlier, this case is trivial
and  $\ns(\btau_n)$ is deterministic.
Indeed, we have $\ff_\gs=h$ and thus \eqref{sture} yields $\gamm=0$.

\pfcase{$\ell>1$, $r=1$} 
This means that $\gs$ is the permutation $23\cdots(\ell+1)1$.

By \eqref{kem} and \eqref{lttl}--\eqref{lttr},
\begin{align}
  \ff_\gs(L,R)=\ga_{\sfL,1}(L)\ga_{\sfR,1}(R)
=\binom{L+1}{\ell}\binom{R-1}{0}=\binom{L+1}{\ell}.
\end{align}
We have, using \eqref{lx4c},
\begin{align}
\E\lrsqpar{L\binom{L+1}{\ell}}=(\ell-1)\E\binom{L+1}{\ell}+(\ell+1)\E\binom{L+1}{\ell+1}=8\ell,  
\qquad \ell\ge2,
\end{align}
and thus \eqref{sture} yields, using also $\E L^2=6$,
\begin{align}
  \gamm&=\frac{1}{4}\Var\lrsqpar{\binom{L+1}{\ell}-(L+R)}
=\frac{1}{4}\lrpar{\Var\lrsqpar{\binom{L+1}{\ell}-L}+2}
\notag\\&
=\frac{1}{4}\lrpar{\E\lrsqpar{\lrpar{\binom{L+1}{\ell}-L}^2}-2}
=\frac{1}{4}{\E\lrsqpar{\binom{L+1}{\ell}^2}}-4\ell+1
.\end{align}
This can easily be evaluated for any $\ell\ge2$, although we do not know a
closed formula.

\pfcase{$\ell=1$, $r>1$} 
This means that $\gs$ is the permutation
$(r+1)1\cdots r\in \fT_{r+1}$.
This case is the same as the preceding one, if we exchange 
$\ell\leftrightarrow r$ and $L \leftrightarrow R$.

\pfcase{$\ell>1$, $r>1$}
This means that $\gs=2\cdots\ell(\ell+r)1(\ell+1)\cdots(\ell+r-1)$.
By \eqref{kem} and \eqref{lttl}--\eqref{lttr},
\begin{align}
  \ff_\gs(L,R)=\ga_{\sfL,1}(L)\ga_{\sfR,1}(R)
=\binom{L}{\ell-1}\binom{R}{r-1}
.\end{align}
We have, using \eqref{lx2c},
\begin{align}
\E\lrsqpar{L\binom{L}{\ell-1}}
=(\ell-1)\E\binom{L}{\ell-1}+\ell\E\binom{L}{\ell}=4\ell-2,  
\qquad \ell\ge2,
\end{align}
and thus \eqref{sture} yields
\begin{align}
  \gamm&=\frac{1}{4}\E\lrsqpar{\lrpar{\binom{L}{\ell-1}\binom{R}{r-1}-L-R}^2}
\notag\\&
=\frac{1}{4}\E\lrsqpar{\binom{L}{\ell-1}^2}\E\lrsqpar{\binom{R}{r-1}^2}
-\frac12\E\lrsqpar{\binom{L}{\ell-1}\binom{R}{r-1}(L+R)}
+\frac14\E(L+R)^2
\notag\\&
=\frac{1}{4}\E\lrsqpar{\binom{L}{\ell-1}^2}\E\lrsqpar{\binom{R}{r-1}^2}
-4(\ell+r)+9
.\end{align}
Again, this is easily evaluated for any $\ell,r\ge2$.

Some numerical values for small $\ell$ and $r$ are given in
\refTab{tab:Elr}.
These values are integers
(but they do not seem to correspond to any integer sequence in \cite{OEIS});
we conjecture that $\gamm(\ell,r)$ is an integer for all
$\ell,r\ge1$, but we have no proof.

Note that $\gamm(1,3)\neq\gamm(2,2)$, which verifies our claim after
\refC{CTT} that $\gamm_\gs$ can differ for different tree permutations
$\gs$, even if they have the same length.
\end{example}

\begin{problem}\label{Plr}
  In \refE{Elr}, is $\gamm$ an integer for every $\ell,r\ge1$?
\end{problem}

\begin{problem}\label{PZ}
  Is $\gamm_\gs$ an integer for every tree permutation $\gs$?
For every forest permutation $\gs$?
\end{problem}


\begin{table}[ht]
  \centering
\begin{tabular}{r||r|r|r|r|r}
 $\ell\backslash r$ & 1 & 2 & 3 & 4 & 5\\
\hline\hline
1 & 0 & 6 & 52 & 306 & 1664\\
\hline
2 & 6 & 2 & 28 & 174 & 944\\
\hline
3 & 52 & 28 & 154 & 800 & 4150\\
\hline
4 &  306 & 174 & 800 & 3946 & 20196\\
\hline
5 & 1664 & 944 & 4150 & 20196 & 103010  
\end{tabular}

  \caption{Some numerical values of $\gamm=\gamm(\ell,r)$ in \refE{Elr}.}
  \label{tab:Elr}
\end{table}

\newcommand\AAP{\emph{Adv. Appl. Probab.} }
\newcommand\JAP{\emph{J. Appl. Probab.} }
\newcommand\JAMS{\emph{J. \AMS} }
\newcommand\MAMS{\emph{Memoirs \AMS} }
\newcommand\PAMS{\emph{Proc. \AMS} }
\newcommand\TAMS{\emph{Trans. \AMS} }
\newcommand\AnnMS{\emph{Ann. Math. Statist.} }
\newcommand\AnnPr{\emph{Ann. Probab.} }
\newcommand\CPC{\emph{Combin. Probab. Comput.} }
\newcommand\JMAA{\emph{J. Math. Anal. Appl.} }
\newcommand\RSA{\emph{Random Structures Algorithms} }
\newcommand\DMTCS{\jour{Discr. Math. Theor. Comput. Sci.} }

\newcommand\AMS{Amer. Math. Soc.}
\newcommand\Springer{Springer-Verlag}
\newcommand\Wiley{Wiley}

\newcommand\vol{\textbf}
\newcommand\jour{\emph}
\newcommand\book{\emph}
\newcommand\inbook{\emph}
\def\no#1#2,{\unskip#2, no. #1,} 
\newcommand\toappear{\unskip, to appear}

\newcommand\arxiv[1]{\texttt{arXiv}:#1}
\newcommand\arXiv{\arxiv}

\newcommand\xand{and }
\renewcommand\xand{\& }

\def\nobibitem#1\par{}


\begin{thebibliography}{99}

\bibitem[Acan and  Hitczenko(2016)]{AH}
H\"useyin Acan \xand Pawe{\l} Hitczenko.
On random trees obtained from permutation graphs.
\emph{Discrete Math.} \vol{339} (2016), no. 12, 2871--2883. 
\MR{3533335}

\bibitem{BassinoEtAl}
Fr{\'e}d{\'e}rique Bassino, Mathilde Bouvel, Valentin F{\'e}ray, Lucas Gerin, 
Micka{\"e}l Maazoun, Adeline Pierrot. 
Universal limits of substitution-closed permutation classes. 
\emph{J. Eur. Math. Soc.} 
\vol{22} (2020), no. 11, 3565--3639.
\MR{4167015}

\bibitem{Comtet}
Louis Comtet.
\emph{Advanced Combinatorics}.
Reidel, Dordrecht, 1974.
\MR{0460128}

\bibitem[Garrabrant and Pak(2016)]{Pak}
Scott Garrabrant \xand Igor Pak.
Permutation patterns are hard to count.  
\emph{Proceedings of the Twenty-Seventh Annual ACM--SIAM Symposium on Discrete
Algorithms (SODA)}, 923--936, 
ACM, New York, 2016.
\MR{3478442}

\bibitem[Gut(2009)]{Gut-SRW}
Allan Gut.
\book{Stopped Random Walks}.
2nd ed.,
Springer, New York, 2009.
\MR{2489436} 



\bibitem[Hoeffding(1948)]{Hoeffding}
 Wassily Hoeffding. 
A class of statistics with asymptotically normal distribution. 
\emph{Ann. Math. Statistics} \textbf{19} (1948), 293--325.
\MR{0026294}

\bibitem{SJIII}
Svante Janson.
\emph{Gaussian Hilbert Spaces}.
Cambridge Univ. Press, Cambridge, UK, 1997.
\MR{1474726}

\bibitem{SJ286}
Svante Janson.
On degenerate sums of $m$-dependent variables.
\emph{J. Appl. Probab.} \vol{52} (2015), no. 4, 1146--1155.
\MR{3439177}

\bibitem{SJ332}
Svante Janson.
Renewal theory for asymmetric $U$-statistics.
\emph{Electron. J. Probab.} \vol{23} (2018), Paper No. 129, 27 pp.
\MR{3896866}
 
\bibitem{SJ333}
Svante Janson. 
Patterns in random permutations avoiding some sets of multiple patterns. 
\emph{Algorithmica} \vol{82} (2020), no. 3, 616--641.
\MR{4058419}

\bibitem{SJ360}
Svante Janson. 
Asymptotic normality for $m$-dependent and constrained $U$>-statistics,
with applications to pattern matching in random strings and permutations.
Preprint, 2021--2022.
\arxiv{2106.09401v3}

  
\bibitem{Kallenberg}
Olav Kallenberg.
\book{Foundations of Modern Probability.}
2nd ed., Springer, New York, 2002. 
\MR{1876169}

\bibitem{OEIS} 
\emph{The On-Line Encyclopedia of Integer Sequences}, 
N. J. A. Sloane ed.
\\\url{https://oeis.org}.






\end{thebibliography}
\end{document}